\documentclass[11pt]{article}
\title{\vspace{-1.0cm}\textbf{Scaling limits and fluctuations for random growth under capacity rescaling}}
\author{\textbf{George Liddle\footnote{g.liddle@lancaster.ac.uk}, Amanda Turner}\footnote{a.g.turner@lancaster.ac.uk}}

\usepackage{geometry}
\newgeometry{vmargin={35mm}, hmargin={35mm,30mm}}
\usepackage{amsthm}
\usepackage{amsmath}
\usepackage{color}
\usepackage{mathrsfs}
\usepackage{eufrak}
\usepackage{ dsfont }
\usepackage{graphicx}
\usepackage{bbold}
\usepackage{xcolor}
\usepackage{enumerate}

\newtheorem{theorem}{Theorem}[section]
\newtheorem{lemma}[theorem]{Lemma}
\newtheorem{proposition}[theorem]{Proposition}
\newtheorem{corollary}[theorem]{Corollary}
\newtheorem{definition}[theorem]{Definition}

\newtheorem*{thm1}{Theorem 1.1}
\newtheorem*{thm2}{Theorem 1.2}
\newtheorem*{thm3}{Theorem 1.3}

\date{}

\begin{document}
\maketitle
\begin{abstract}
We evaluate a strongly regularised version of the Hastings-Levitov model HL$(\alpha)$ for $0\leq \alpha<2$. Previous results have concentrated on the small-particle limit where the size of the attaching particle approaches zero in the limit. However, we consider the case where we rescale the whole cluster by its capacity before taking limits, whilst keeping the particle size fixed. We first consider the case where $\alpha=0$ and show that under capacity rescaling, the limiting structure of the cluster is not a disk, unlike in the small-particle limit. Then we consider the case where $0<\alpha<2$ and show that under the same rescaling the cluster approaches a disk. We also evaluate the fluctuations and show that, when represented as a holomorphic function, they behave like a Gaussian field dependent on $\alpha$. Furthermore, this field becomes degenerate as $\alpha$ approaches 0 and 2, suggesting the existence of phase transitions at these values.
\end{abstract}
\tableofcontents
\section{Introduction}
Random growth occurs in many real world settings, for example we see it exhibited in the growth of tumours and bacterial growth. We would like to be able to model such processes to determine their behaviour in their scaling limits. Since the the 1960's, models have been built in order to describe individual processes. Perhaps the most famous examples of such models are the Eden model \cite{eden} and Diffusion Limited Aggregation (DLA) \cite{dla}. The Eden model is used to describe bacterial colony growth, whereas, DLA describes mineral aggregation (see for example \cite{rohde2005some}).
\\
\\
In their 1998 paper \cite{hastings1998laplacian},  Hastings and Levitov introduced a one parameter family of conformal maps HL$(\alpha)$ which can be used to model Laplacian growth processes and allows us to vary between the previous models by varying the parameter $\alpha$. In contrast to many well studied lattice based models, HL$(\alpha)$ is formed by using conformal mappings \cite{hastings1998laplacian}. We can then use complex analysis techniques to evaluate the growth. We consider a regularised version of this model and show that at certain values of $\alpha$ a phase transition on the scaling limits occurs. 
\subsection{Outline of the model}

In order to define our model we start by  defining the single particle map. Define $\Delta$ as the exterior of the unit disk in the complex plane, $\Delta=\{ |z|>1\}$. For any conformal map $f:\Delta\to \mathbb{C}$ we define the logarithmic capacity of the map to be, $$\lim_{z\to \infty}\log\left(f'(z)\right):=\log f'(\infty).$$ 
For each $c>0$, we then choose a general single particle mapping $f_{c}:\Delta\to \mathbb{C} \backslash K$ which takes the exterior of the unit disk to itself minus a particle of logarithmic capacity $c>0$ at $z=1$. Note that we can then rescale and rotate the mapping $f_{c}(z)$ to allow any attaching point on the boundary of the unit disk by letting $f_n(z)=e^{i\theta_n} f_{c_n}(ze^{-i \theta_n})$ where $\theta_n$ is the attaching angle and $c_n$ is the logarithmic capacity of the $n^{\text{th}}$ particle map $f_{c_n}(z)$.  
\\
\\
We can now form the cluster by composing the single particle maps. Let $K_0= \Delta^c=\{|z|\leq 1\}.$
 Suppose that we have some compact set $K_n$ made up of $n$ particles. We can find a bi-holomorphic map which fixes $\infty$ and takes the exterior of the unit disk to the complement of $K_n$ in the complex plane, $\phi_n: \Delta \to \mathbb{C}\backslash K_n.$
We then define the map $\phi_{n+1}$ inductively;
$$\phi_{n+1}=\phi_n \circ f_{n+1}=f_{1}\circ f_{2} \circ .... \circ f_{n+1}.$$
\\
There are several possible choices for the particle map $f_{c}$. The choice we make is  determined by what shape we would like the attaching particle to have. Hastings and Levitov introduce both the strike and bump mappings in \cite{hastings1998laplacian}. The strike map attaches a single slit onto the boundary at $z=1$ whereas the bump map attaches a particle with non-empty interior. We would like results to not be dependent of the specific choice of particle shape. In \cite{norris2019scaling}, Norris et al show there exists some absolute constant $c_0$ such that, provided $0\leq c <c_0$, all of the aforementioned shapes of particles, and indeed many other natural choices, satisfy the condition,
\begin{equation}\label{eqn}
f_{c}(z)=e^{c}z\exp\left(\frac{2c}{z-1}+\delta_c \left(z\right)\right)
\end{equation}
where $\delta_c(z)$ is some function of $z$ with $|\delta_c(z)|<\frac{\tilde{\lambda }c^{\frac{3}{2}}|z|}{|z-1|(|z|-1)}$ and $\tilde{\lambda}\in [0, \infty)$ is some constant not dependent on $c$ or $z$. Therefore, we take our single particle mappings from a class of particles satisfying (\ref{eqn}) for fixed $\tilde{\lambda}$. In the proofs that follow it will become clear that our results do not depend on the precise value of $\delta_c(z)$.
\\
\\
Now it just remains to define how the attaching points $\theta_n$ and capacities $c_n$ are chosen. We want to model Laplacian growth and so we choose the $\theta_n$ to be uniformly distributed, independent for each $n$, on the circle. This choice is made because after renormalisation of $\phi_n$, the Lebesgue measure of the unit circle under the image of $\phi_n$ is harmonic measure as seen from infinity \cite{rohde2005some}, and the harmonic measure of a portion of the unit circle is just the arclength of that portion rescaled by $2 \pi$. 
\\
\\
Finally, we must choose how the capacities $c_n$ are  distributed. Hastings and Levitov \cite{hastings1998laplacian} introduced a parameter $\alpha$ in order to distinguish between the various individual models they would like to encode within this one model for Laplacian growth. They choose, $$c_n=c|\phi_{n-1} '(e^{i\theta_n})|^{-\alpha}$$ for some $c>0$. This gives an off-lattice version of the Eden model when $\alpha=1$ and DLA when $\alpha=2$. In Section 3, we show that the total capacity, $\phi_n'(\infty)$ is well approximated by $(1+\alpha c n)^{\frac{1}{\alpha}}$. Therefore, if we define a version of HL$(\alpha)$ using the very strong regularisation $\tilde{c}_n=c|\phi_{n-1} '(\infty)|^{-\alpha}$, $\tilde{c}_n$ is approximately given by
\begin{equation} \label{eqn3}
c_n^*:=\frac{c}{1+\alpha c(n-1)}.
\end{equation}
In what follows, we denote $\phi_n=f_1\circ...\circ f_n$ where $f_n(z)=e^{i\theta_n} f_{c_n^{*}}(ze^{-i \theta_n})$ with $\theta_n$ i.i.d uniform on $[0,2\pi]$. We then keep $c$ fixed and rescale the cluster by its total capacity and evaluate the shape of the rescaled cluster $e^{-\sum_{i=1}^n c_{i}^{*}}\phi_n$ as $n\to \infty$.
\subsection{Previous work}
With the model now defined we can outline the work already done in this area. Most work has been done in the small-particle limit. This method involves evaluating the limiting cluster $\phi_n$ as we send the particle capacity $c\to 0$ while sending $n\to \infty$ with $nc\sim t$ for some $t$. Using this method Turner and Norris show that for $\alpha=0$ the limiting cluster in the small particle case behaves like a growing disk \cite{norris2012hastings}. Furthermore, Turner, Viklund and Sola show that in the small particle limit the shape of the cluster in a regularised setting approaches a circle for all $\alpha\geq 0$ provided the regularisation is sufficient \cite{viklund2015small}. Moreover, Silvestri \cite{silvestri2017fluctuation} shows that the fluctuations on the boundary, for HL$(0)$, in this small particle limit can be characterised by a log-correlated Gaussian field.
\\
\\
A different approach to that of the small-particle limit is to  not let $c\to0$ as $n\to\infty$, but instead, the limit of the cluster is found by rescaling the whole cluster by the capacity of the cluster at time $n$, before taking limits as the number of particles tends to infinity.  Rhode and Zinsmeister introduce a regularisation to the Hastings-Levitov model and show that in the case of $\alpha=0$ the limiting cluster under capacity rescaling exists and has finite length \cite{rohde2005some}.
\\
\\
Our work will follow the second approach. We will use results and ideas from the papers listed above, and in particular methods from \cite{norris2019scaling}, in order to characterise the limiting shape of the cluster in a regularised setting for $0\leq \alpha <2$ and then evaluate the fluctuations. Our results break down for $\alpha\geq 2$, this will be the subject of future work.

\subsection{Statement of results}
We first consider the case where $\alpha=0$ and show that under capacity rescaling, the limiting structure of the rescaled cluster is not a disk. This comes in the form of the following theorem. 
\begin{theorem}
Given any sequence $\{\theta_k\}_{1\leq k\leq n}$ of angles between $0$ and $2\pi$ and $c>0$, set $\Psi_n=f_1\circ...\circ f_n$ where $f_k(z)=e^{i\theta_k}f_c(e^{-i\theta_k} z)$ and $f_c(z)$ is any fixed capacity map in the class of particles given by (\ref{eqn}). There exists some $c_0>0$, which depends only on $\tilde{\lambda}$ such that for all $0<c<c_0$, there exists an $\epsilon>0$ such that for all $r>1$,
$$\limsup_{n\to \infty} \sup_{|z|>r}|e^{-cn}\Psi_{n}(z)-z|>\epsilon . $$
In particular if  $\{\theta_k\}_{1\leq k\leq n}$ are i.i.d uniform on $[0,2 \pi]$ then $\Psi_n$ is the HL$(0)$ process and the statement above shows that HL$(0)$ does not converge to a disk under capacity rescaling.
\end{theorem}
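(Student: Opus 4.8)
The plan is to show that the whole obstruction sits in the constant Laurent coefficient of $\Psi_n$ at infinity, i.e.\ in the rigid-translation part of the rescaled map, and that the higher coefficients are irrelevant. For $\alpha=0$ the capacities are all equal to $c$, so by the chain rule at $\infty$ one has $\Psi_n'(\infty)=e^{cn}$ exactly; write $\Psi_n(z)=e^{cn}z+a_0^{(n)}+a_1^{(n)}z^{-1}+\cdots$ on $\{|z|>1\}$ and set $A_n:=e^{-cn}a_0^{(n)}$, so that $e^{-cn}\Psi_n(z)-z\to A_n$ as $z\to\infty$. Since $\{|z|>r\}$ is unbounded for every $r>1$, this already yields the uniform-in-$r$ bound
\begin{equation*}
\sup_{|z|>r}\bigl|e^{-cn}\Psi_n(z)-z\bigr|\ \ge\ |A_n|.
\end{equation*}
Hence it suffices to produce $\epsilon=\epsilon(c,\tilde\lambda)>0$ with $\limsup_{n\to\infty}|A_n|>\epsilon$; the final sentence of the theorem is then immediate, the i.i.d.\ uniform case being a special instance.

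Next I would extract a recursion for $A_n$ from $\Psi_n=\Psi_{n-1}\circ f_n$. Expanding (\ref{eqn}) at $\infty$: the function $\delta_c$ is holomorphic on $\Delta$ with $\delta_c(\infty)=0$, and the stated bound forces $e_1:=\lim_{u\to\infty}u\,\delta_c(u)$ to satisfy $|e_1|\le\tilde\lambda c^{3/2}$; therefore $f_c(u)=e^{c}u+e^{c}(2c+e_1)+o(1)$ and so $f_n(z)=e^{c}z+e^{c}(2c+e_1)e^{i\theta_n}+o(1)$ as $z\to\infty$. Comparing constant terms in $\Psi_n=\Psi_{n-1}\circ f_n$ (the tail $a_1^{(n-1)}/f_n(z)+\cdots$ contributes only to negative powers of $z$) gives $a_0^{(n)}=a_0^{(n-1)}+e^{cn}(2c+e_1)e^{i\theta_n}$, whence
\begin{equation*}
A_n=e^{-c}A_{n-1}+2c\,e^{i\theta_n}+\tilde d_n,\qquad A_0=0,\qquad |\tilde d_n|\le\tilde\lambda c^{3/2}.
\end{equation*}

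The final step is a ``self-correcting'' lower bound for this recursion. Using $\min_{\theta}|a+be^{i\theta}|=\bigl||a|-|b|\bigr|$ one gets $|A_n|\ge\bigl|2c-e^{-c}|A_{n-1}|\bigr|-\tilde\lambda c^{3/2}$. Let $L:=\limsup_n|A_n|$. If $e^{-c}|A_{n-1}|\le 2c$ for all large $n$, then $|A_n|\ge 2c-e^{-c}|A_{n-1}|-\tilde\lambda c^{3/2}$ for such $n$, and passing to the $\limsup$ gives $L\ge 2c-e^{-c}L-\tilde\lambda c^{3/2}$, i.e.\ $L\ge(2c-\tilde\lambda c^{3/2})/(1+e^{-c})$; otherwise $|A_{n-1}|>2ce^{c}$ along a subsequence and $L\ge 2ce^{c}$, which is larger still. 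Taking $c_0$ to be the minimum of the constant from (\ref{eqn}) and a value depending only on $\tilde\lambda$ that makes $2c-\tilde\lambda c^{3/2}>c$ for $0<c<c_0$, we may take $\epsilon=c/\bigl(2(1+e^{-c})\bigr)$.

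I expect the only genuine difficulty to be conceptual: recognising that the right quantity to track is the constant coefficient (a persistent macroscopic translation), rather than, say, the sup norm of $e^{-cn}\Psi_n(z)-z$ on a fixed circle, which could conceivably vanish while the limit shape is still non-circular; and that the estimate must be uniform over \emph{all} angle sequences, including adversarial ones, so it cannot rely on any cancellation or independence. The mechanism is that each particle injects a kick of size $\asymp c$ into $A_n$ while the capacity rescaling contracts only by $e^{-c}\approx 1-c$, too weakly to absorb it; the error bookkeeping for $\delta_c$ via the bound in (\ref{eqn}) is routine.
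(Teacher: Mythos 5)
Your proof is correct, and it takes a genuinely different route from the paper's. The paper argues by contradiction on the full supremum: it writes $e^{-cn}\Psi_n(z)-z=(e^{-c}f_n(z)-z)+e^{-c}\bigl(e^{-c(n-1)}\Psi_{n-1}(f_n(z))-f_n(z)\bigr)$, lower-bounds $\sup_{|z|>r}|e^{-c}f_n(z)-z|$ by letting $z\to\infty$ (giving $2c-\tilde\lambda c^{3/2}$), uses $|f_n(z)|>r$ to compare the remaining sup to the corresponding sup for $\Psi_{n-1}$, and so obtains a one-sided recursive inequality on the sup that contradicts the negated conclusion. You instead name the carrier of the obstruction --- the rescaled constant Laurent coefficient $A_n$ --- observe that $\sup_{|z|>r}|e^{-cn}\Psi_n(z)-z|\ge|A_n|$ uniformly in $r$, and derive the exact affine recursion $A_n=e^{-c}A_{n-1}+(2c+e_1)e^{i\theta_n}$ directly from the Laurent expansion at $\infty$. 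Both proofs rest on the same mechanism (each particle injects a translation of size $\approx 2c$ at infinity, while capacity rescaling contracts only by $e^{-c}$), and both extract the $2c$ by looking at $z\to\infty$; what your version buys is a direct rather than indirect argument, an exact recursion in place of a one-sided sup inequality, and a concrete identification of \emph{where} disk-convergence fails (in the persistent rigid translation). One small step is worth spelling out: from $|A_n|\ge 2c-e^{-c}|A_{n-1}|-\tilde\lambda c^{3/2}$ the clean passage is $\liminf_n|A_n|\ge 2c-e^{-c}\limsup_n|A_{n-1}|-\tilde\lambda c^{3/2}$, and then $L\ge\liminf_n|A_n|$ closes the inequality $L\ge 2c-e^{-c}L-\tilde\lambda c^{3/2}$; your phrase ``passing to the limsup'' is shorthand for this two-step argument, which is correct but benefits from being made explicit.
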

This result is particularly interesting because it is independent of our choice of angles. If we have a constant capacity map of the right form then there is no possible way to choose the angles so that under capacity rescaling the limiting cluster looks like a disk.
\\
\\
Next we consider the case where $0<\alpha<2$ and show that under capacity rescaling the $HL(\alpha)$ cluster approaches a disk. We then evaluate the fluctuations and show that they behave like a Gaussian field dependent on $\alpha$. Our two main results are stated as follows.
\begin{theorem}\label{a}
For $0<\alpha<2$, let the map $\phi_n$ be defined as above with $c_n^{*}$ as defined in (\ref{eqn3}) and $\theta_n$ i.i.d uniform on $[0,2 \pi]$. Then for any $r>1$,
$$\mathbb{P}\left(\limsup_{n\to \infty}\left\lbrace\sup_{|z|\geq r}|e^{-\sum_{i=1}^nc_i^*}\phi_n(z)-z|>\frac{\log n}{\sqrt{n}}\right\rbrace\right)=0.$$
\end{theorem}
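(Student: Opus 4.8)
The plan is to iterate a recursion for the normalised error $D_n(z):=e^{-\sum_{i=1}^nc_i^*}\phi_n(z)-z$, to split it into a martingale term plus a small deterministic term, and then to push a concentration estimate through the first Borel--Cantelli lemma. Using $\phi_n=\phi_{n-1}\circ f_n$ and $e^{-\sum_{i=1}^nc_i^*}=e^{-c_n^*}e^{-\sum_{i=1}^{n-1}c_i^*}$ one gets $D_n(z)=\big(e^{-c_n^*}f_n(z)-z\big)+e^{-c_n^*}D_{n-1}(f_n(z))$ with $D_0\equiv0$. Expanding $f_n$ via \eqref{eqn} gives $e^{-c_n^*}f_n(z)-z=b_n(z)+R_n(z)$, where $b_n(z)=\tfrac{2c_n^*z}{ze^{-i\theta_n}-1}$ is the dominant increment and the remainder obeys $\sup_{|z|\geq r}|R_n(z)|\leq C_r(c_n^*)^{3/2}$ (from the bound on $\delta_c$ in \eqref{eqn} and the Taylor expansion of the exponential). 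A residue computation shows $\mathbb{E}_{\theta}\big[\tfrac{w}{we^{-i\theta}-1}\big]=0$ for every $|w|>1$, so $b_n$ has vanishing mean in $\theta_n$. Unwinding the recursion,
\[
D_n(z)=\sum_{k=1}^n\gamma_{k,n}\,b_k(\zeta_k(z))+\sum_{k=1}^n\gamma_{k,n}\,R_k(\zeta_k(z))=:S_n(z)+T_n(z),
\]
where $\zeta_k(z):=f_{k+1}\circ\cdots\circ f_n(z)$ (so $\zeta_n(z)=z$) and $\gamma_{k,n}:=e^{-\sum_{j=k+1}^nc_j^*}\leq\big(\tfrac{1+\alpha ck}{1+\alpha cn}\big)^{1/\alpha}$.

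Since each $f_c$ maps $\Delta$ holomorphically into $\Delta$ and fixes $\infty$, the Schwarz lemma applied to $z\mapsto1/f_c(1/z)$ gives $|f_c(w)|\geq|w|$ for every $|w|>1$; hence $|\zeta_k(z)|\geq|z|\geq r$ for all $k$, \emph{deterministically}. Therefore $|b_k(\zeta_k(z))|\leq\beta c_k^*$ with $\beta=\tfrac{2r}{r-1}$, and the bound on $R_k$ holds uniformly along the composition. For the error term one checks $\sum_{k=1}^n\gamma_{k,n}(c_k^*)^{3/2}\leq C_r\,n^{-1/2}$: the point is that $\gamma_{k,n}(c_k^*)^{3/2}$ is at most a constant times $(1+\alpha cn)^{-1/\alpha}(1+\alpha ck)^{1/\alpha-3/2}$, and $\sum_{k\leq n}(1+\alpha ck)^{1/\alpha-3/2}$ is of order $n^{1/\alpha-1/2}$ precisely because $1/\alpha-3/2>-1$, i.e.\ because $\alpha<2$; so $\sup_{|z|\geq r}|T_n(z)|\leq C_r\,n^{-1/2}=o(\log n/\sqrt n)$ deterministically. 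For the main term, fix $n$ and $z$ and sum over $k$ in decreasing order with respect to the reverse filtration $\mathcal{F}_j:=\sigma(\theta_n,\dots,\theta_{n-j+1})$: at each stage the new summand is $\gamma_{\cdot,n}b(\zeta)$ with $\zeta$ already $\mathcal{F}$-measurable and the fresh angle entering only through $b$, so by the mean-zero identity it has zero conditional mean; thus $S_n(z)$ is a complex martingale whose increments are bounded by deterministic constants $a_j$ with $\sum_ja_j^2\leq\beta^2\sum_{k\leq n}\gamma_{k,n}^2(c_k^*)^2\leq C_{\alpha,c,r}/n$, this last bound being $O(1/n)$ again exactly because $2/\alpha-2>-1$. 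Azuma--Hoeffding applied to the real and imaginary parts then yields $\mathbb{P}\big(|S_n(z)|>\lambda\big)\leq4\exp(-c_1\lambda^2 n)$ for some $c_1=c_1(\alpha,c,r)>0$ and every $|z|\geq r$.

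To pass to a statement uniform in $z$: $S_n$, $T_n$ and $D_n$ are holomorphic on $\{|z|>1\}$ and extend holomorphically to $z=\infty$, so by the maximum modulus principle $\sup_{|z|\geq r}|g(z)|=\sup_{|z|=r}|g(z)|$ for $g\in\{S_n,T_n,D_n\}$. A crude deterministic bound $\sup_{|z|\geq r'}|S_n|\leq C_0$ (from $\sum_k\gamma_{k,n}c_k^*=O(1)$, valid for any $r'>1$) together with a Cauchy estimate gives $|S_n'|\leq C_1$ on $\{|z|=r\}$ uniformly in $n$, so a net of $O(n)$ equally spaced points on the circle $|z|=r$ controls $\sup_{|z|=r}|S_n|$ up to $O(1/n)$. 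Taking $\lambda=(1-\epsilon)\log n/\sqrt n$ and a union bound over the net gives $\mathbb{P}\big(\sup_{|z|\geq r}|S_n(z)|>(1-\epsilon)\tfrac{\log n}{\sqrt n}\big)\leq n^{-2}$ for all large $n$; combined with $\sup_{|z|\geq r}|T_n(z)|\leq\epsilon\tfrac{\log n}{\sqrt n}$ for large $n$, this makes $\sum_n\mathbb{P}\big(\sup_{|z|\geq r}|D_n(z)|>\tfrac{\log n}{\sqrt n}\big)$ finite, and the first Borel--Cantelli lemma completes the proof.

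The main obstacle is getting concentration strong enough to be summable in $n$. A Burkholder--Davis--Gundy estimate for $S_n$ only yields $\mathbb{E}|S_n(z)|^p\leq C_p n^{-p/2}$, hence $\mathbb{P}(\sup_{|z|\geq r}|S_n|>\log n/\sqrt n)=O((\log n)^{-p})$, which is not summable for any fixed $p$; one genuinely needs the exponential Azuma bound, and for that one needs the sharp estimate that $\sum_k\gamma_{k,n}^2(c_k^*)^2$ has order $1/n$. This is precisely where the hypothesis $\alpha<2$ enters --- for $\alpha=2$ this sum gains an extra factor $\log n$ and the estimate degrades --- so it is the natural boundary at which the method breaks down. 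The remaining ingredients (the martingale structure, the mean-zero identity, the two deterministic sums, the distortion bound from Schwarz's lemma, and the passage from pointwise to uniform control) are routine once the decomposition above is set up.
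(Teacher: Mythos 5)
Your proposal is correct, but it follows a genuinely different route from the paper, and it is worth recording the contrast. The paper decomposes $M_n(z)=e^{-C_{1,n}^{*}}\phi_n(z)-z$ ``outside--in'': its increments $X_{k,n}(z)=e^{-C_{1,n}^{*}}\big(\phi_k(e^{C_{k+1,n}^{*}}z)-\phi_{k-1}(e^{C_{k,n}^{*}}z)\big)$ are martingale differences for the forward filtration $\mathcal F_{k-1}=\sigma(\theta_1,\dots,\theta_{k-1})$, the mean-zero property coming from the identity $\mathbb{E}_\theta\big[\phi_{k-1}(e^{i\theta}f_{c_k^{*}}(e^{-i\theta}w))\big]=\phi_{k-1}(e^{c_k^{*}}w)$; it then controls $|X_{k,n}|$ and $T_n=\sum_k\mathbb{E}[|X_{k,n}|^2\mid\mathcal F_{k-1}]$ via the parameterisation $\eta_{k,n}$ and the Koebe distortion theorem applied to $\phi_{k-1}'$, and applies Freedman's inequality. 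You instead unwind the recursion $\phi_n=\phi_{n-1}\circ f_n$ ``inside--out'', writing $D_n(z)=\sum_k\gamma_{k,n}\,b_k(\zeta_k(z))+\sum_k\gamma_{k,n}\,R_k(\zeta_k(z))$ with $\zeta_k=f_{k+1}\circ\cdots\circ f_n$; this makes $S_n$ a martingale for the \emph{reverse} filtration $\sigma(\theta_n,\theta_{n-1},\dots)$, the fresh angle entering only through the explicit scalar kernel $b_k$, so the mean-zero identity is a one-line residue computation and no distortion estimate on a partial composition is needed---only the Schwarz-lemma fact $|\zeta_k(z)|\ge|z|$. Because the increments $\gamma_{k,n}b_k(\zeta_k)$ are then \emph{almost surely} bounded by the deterministic $a_k=\gamma_{k,n}\tfrac{2rc_k^{*}}{r-1}$, Azuma--Hoeffding suffices in place of Freedman, and both routes land on the same critical sums $\sum_k\gamma_{k,n}^2(c_k^{*})^2=O(1/n)$ and $\sum_k\gamma_{k,n}(c_k^{*})^{3/2}=O(n^{-1/2})$, which is where $\alpha<2$ is used. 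Your Lipschitz step is also sharper: you get a Cauchy estimate $|S_n'|\le C_1$ on $\{|z|=r\}$, uniformly in $n$, from the deterministic bound $\sup_{|z|\ge r'}|S_n|\le C_0$, so a net of $O(n)$ points suffices, whereas the paper's Lemma 5.3 bounds the modulus of continuity of $M_n$ itself via the distortion theorem, obtains a Lipschitz constant of order $n\log n$, and correspondingly needs a net of size $L_{r,n}\sim n^{3/2}$. Two places deserve an explicit sentence in a final write-up: the remainder bound $\sup_{|z|\ge r}|R_k|\le C_r(c_k^{*})^{3/2}$ combines the $c^{3/2}$ from the $\delta_c$ estimate with the $c^2$ from the quadratic Taylor term (both uniform over $\theta_k$ and over $|z|\ge r$), and in the union bound you should absorb the $O(1/L_{r,n})$ net error before writing the threshold as $(1-\epsilon)\log n/\sqrt n$; both are routine. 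Overall your version is correct and, by keeping the ``outer'' map out of the martingale increments, it avoids most of the distortion-theoretic bookkeeping that the paper's Lemmas 4.6, 4.8 and 5.3 are devoted to.
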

This result tells us we have uniform convergence of our cluster in the exterior disk to a disk. The following result shows that the fluctuations behave like a Gaussian field. 
\begin{theorem} \label{6.10}
Let $0<\alpha<2$ and $\phi_n$ be defined as in Theorem \ref{a}. Then as $n\to \infty$,
$$\sqrt{n}\left(e^{-\sum_{i=1}^{n}c_i^{*}}\phi_{n}(z)-z\right)\to \mathcal{F}(z)$$
in distribution on $\mathcal{H}$, where $\mathcal{H}$ is the space of holomorphic functions on ${|z| > 1}$,  equipped with a suitable metric $\textbf{d}_{\mathcal{H}}$ defined later, and where 
$$\mathcal{F}(z)=\sum_{m=0}^{\infty}(A_m+iB_m)z^{-m}$$
with $A_m$, $B_m \sim  \mathcal{N}\left(0, \frac{2}{\alpha(2m+2-\alpha)}\right)$ and $A_m$, $B_k$ independent for all choices of $m$ and $k$.
\end{theorem}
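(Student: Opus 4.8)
The plan is to pass to the Laurent coefficients of the rescaled map, diagonalise the linearised dynamics, and reduce the statement to a central limit theorem for a weighted sum of independent bounded variables with \emph{deterministic} weights; the passage to the function space $\mathcal{H}$ is then a tightness argument. Set $\Phi_n:=e^{-\sum_{i=1}^n c_i^*}\phi_n$. Since $f_k'(\infty)=e^{c_k^*}$, the map $\Phi_n$ is univalent on $\Delta$ with $\Phi_n(\infty)=\infty$ and $\Phi_n'(\infty)=1$, so $\Lambda_n(z):=\Phi_n(z)-z=\sum_{m\geq 0}\lambda_n^{(m)}z^{-m}$ converges on $|z|>1$ and it suffices to control the coefficients $\lambda_n^{(m)}$. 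From $\phi_{n+1}=\phi_n\circ f_{n+1}$ one gets the exact recursion $\Phi_{n+1}=e^{-c_{n+1}^*}\,\Phi_n\circ f_{n+1}$; substituting $e^{-c}f_c(z)=z\exp\!\bigl(\tfrac{2c\,e^{i\theta}}{z-e^{i\theta}}+\delta_c(\cdot)\bigr)$ after the rotation and expanding to first order in $c_{n+1}^*$ and in $\Lambda_n$ about $z$ gives
\begin{equation*}
\Lambda_{n+1}(z)=\Lambda_n(z)+c_{n+1}^*\bigl(z\Lambda_n'(z)-\Lambda_n(z)\bigr)+2c_{n+1}^*\,\frac{z\,e^{i\theta_{n+1}}}{z-e^{i\theta_{n+1}}}+\mathcal{E}_{n+1}(z),
\end{equation*}
where $\mathcal{E}_{n+1}$ gathers the $O((c_{n+1}^*)^2)$ and $\delta_{c_{n+1}^*}$ terms from the exponential, the product term $2c_{n+1}^*\tfrac{z\,e^{i\theta_{n+1}}}{z-e^{i\theta_{n+1}}}\Lambda_n'(z)$, and the higher-order Taylor terms of $\Lambda_n$. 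Crucially, every piece of $\mathcal{E}_{n+1}$ is, conditionally on $\theta_1,\dots,\theta_n$, either centred (using $\mathbb{E}\,e^{ij\theta}=0$ for $j\geq1$) or $O((c_{n+1}^*)^2|\Lambda_n|)$, and carries an extra small factor — a positive power of $c_{n+1}^*$, or a factor comparable to $|\Lambda_n|$ — relative to the main martingale increment.

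Since $Lh:=zh'-h$ satisfies $Lz^{-m}=-(m+1)z^{-m}$, the modes decouple at the linear level, $\lambda_{n+1}^{(m)}=(1-(m+1)c_{n+1}^*)\lambda_n^{(m)}+2c_{n+1}^*e^{i(m+1)\theta_{n+1}}+\varepsilon_{n+1}^{(m)}$, and solving this scalar linear recursion from a fixed time $n_0(m)$ past which $(m+1)c_n^*<1$,
\begin{equation*}
\lambda_n^{(m)}=\sum_{k=1}^{n}W^{(m)}_{k,n}\,2c_k^*\,e^{i(m+1)\theta_k}+R_n^{(m)},\qquad W^{(m)}_{k,n}:=\prod_{l=k+1}^{n}\bigl(1-(m+1)c_l^*\bigr),
\end{equation*}
with the contributions of the first $n_0$ particles absorbed into $R_n^{(m)}$ since $W^{(m)}_{n_0,n}\to0$. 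As the $c_k^*$ are deterministic, so are the $W^{(m)}_{k,n}$, and the leading term is a weighted sum of the independent, bounded, centred variables $e^{i(m+1)\theta_k}$. Using $c_l^*=\tfrac{c}{1+\alpha c(l-1)}\sim\tfrac1{\alpha l}$ one finds $W^{(m)}_{k,n}\sim(k/n)^{(m+1)/\alpha}$, hence $n\sum_{k=1}^n(2c_k^*)^2(W^{(m)}_{k,n})^2\to\tfrac{4}{\alpha(2m+2-\alpha)}$, the inequality $(m+1)/\alpha>\tfrac12$ — i.e.\ $m\geq0$ together with $\alpha<2$ — being precisely what keeps this sum of order $1/n$. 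For $z_1,\dots,z_p$ with $|z_j|>1$, write $\sqrt n\,\Lambda_n(z_j)=2\sqrt n\sum_{k=1}^n c_k^*\,h_{k,n}(z_j,\theta_k)+\sqrt n\sum_{m}R_n^{(m)}z_j^{-m}$ with $h_{k,n}(z,\theta)=\sum_m W^{(m)}_{k,n}e^{i(m+1)\theta}z^{-m}$; the main term is a weighted sum of independent-in-$k$ centred bounded summands, so Cram\'er--Wold together with the Lindeberg--Feller CLT applies once asymptotic negligibility of the largest summand is checked. Orthogonality of $\{e^{i(m+1)\theta}\}_{m\geq0}$ makes the limiting covariance equal $\sum_m\tfrac{4}{\alpha(2m+2-\alpha)}z_i^{-m}\overline{z_j}^{-m}$ and the corresponding pseudo-covariances vanish, which is exactly the law of the proper complex Gaussian field $\mathcal{F}$ with coefficients $A_m+iB_m$, $A_m,B_m$ independent $\mathcal{N}\!\bigl(0,\tfrac{2}{\alpha(2m+2-\alpha)}\bigr)$; this gives the finite-dimensional convergence.

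The main work, and the principal obstacle, will be to show that the remainder is negligible: $\sqrt n\sum_m R_n^{(m)}z^{-m}\to0$ locally uniformly on $|z|>1$. Each contribution to $R_n^{(m)}$ is an error increment propagated through the linear recursion; the centred ones are handled in $L^2$ by the same orthogonality, and those involving $\Lambda_n$ are controlled using the almost sure bound $\sup_{|z|\geq r}|\Lambda_n(z)|\leq\log n/\sqrt n$ eventually, from Theorem~\ref{a}, together with Cauchy estimates for $\Lambda_n'$ and $\Lambda_n''$ on a slightly smaller exterior disk — or, equivalently, the $L^2$ estimate $\mathbb{E}|\lambda_n^{(m)}|^2\leq\tfrac{C}{n(2m+2-\alpha)}$, which follows by combining Theorem~\ref{a} with the recursion or by a direct Gronwall-type bootstrap. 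In every case the decisive point is that the extra factor $c_k^*$ (or $\log k/\sqrt k$) carried by an error term makes its sum against the weights $W^{(m)}_{k,n}\sim(k/n)^{(m+1)/\alpha}$ of strictly smaller order than $n^{-1/2}$, and this again rests on $\alpha<2$; the deterioration of these bounds as $\alpha\uparrow2$, and the divergence of the variances $\tfrac{2}{\alpha(2m+2-\alpha)}$ as $\alpha\downarrow0$ (and, for the $m=0$ mode, as $\alpha\uparrow2$), reflect the phase transitions flagged in the introduction. The off-diagonal, mode-mixing term $2c_{n+1}^*\tfrac{z\,e^{i\theta_{n+1}}}{z-e^{i\theta_{n+1}}}\Lambda_n'(z)$ deserves particular care, since it is what could in principle spoil the independence of the limiting Fourier modes; the resolution is that, relative to the main martingale increment, it carries an extra factor $c_{n+1}^*$ and is therefore negligible.

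Finally, to upgrade the finite-dimensional convergence to convergence in distribution on $(\mathcal{H},\mathbf{d}_{\mathcal{H}})$, one proves tightness of $\{\sqrt n\,\Lambda_n\}$. For each $r>1$ the uniform bound $\mathbb{E}|\lambda_n^{(m)}|^2\leq\tfrac{C}{n(2m+2-\alpha)}$ gives $\mathbb{E}\sup_{|z|\geq r}|\sqrt n\,\Lambda_n(z)|\leq\sum_m\bigl(n\,\mathbb{E}|\lambda_n^{(m)}|^2\bigr)^{1/2}r^{-m}\leq C_r<\infty$ uniformly in $n$; Cauchy estimates on a slightly larger annulus then provide the equicontinuity needed for tightness in $\mathcal{H}$. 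Tightness together with the finite-dimensional convergence established above yields $\sqrt n(\Phi_n(z)-z)\to\mathcal{F}(z)$ in distribution on $\mathcal{H}$, completing the argument.
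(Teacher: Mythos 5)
Your approach is correct in outline and takes a genuinely different route from the paper. The paper organizes the fluctuation analysis around the telescoping martingale decomposition $\sqrt{n}\bigl(e^{-C_{1,n}^*}\phi_n(z)-z\bigr)=\sqrt{n}\sum_k X_{k,n}(z)$ with $X_{k,n}(z)=e^{-C_{1,n}^*}\bigl(\phi_k(e^{C_{k+1,n}^*}z)-\phi_{k-1}(e^{C_{k,n}^*}z)\bigr)$; after discarding lower-order terms it replaces each $X_{k,n}$ by the explicit $\mathcal{X}_{k,n}(z)=2c_k^*\sqrt{n}\,z/(e^{-i\theta_k}e^{C_{k+1,n}^*}z-1)$, reads off its Laurent coefficients $a_{k,n}(m)=2c_k^*\sqrt{n}\,e^{-(m+1)C_{k+1,n}^*}e^{i(m+1)\theta_k}$, and feeds these, via Cram\'er--Wold, into McLeish's martingale CLT; the passage to $\mathcal{H}$ is the same tail-discard plus Billingsley argument you sketch. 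You instead derive a one-step recursion for $\Lambda_n=\Phi_n-z$, linearize, observe that $L=z\,d/dz-1$ diagonalizes on the monomials so the modes decouple, and solve the scalar recursion to exhibit $\lambda_n^{(m)}$ as a deterministically weighted sum $\sum_k W_{k,n}^{(m)}\,2c_k^*\,e^{i(m+1)\theta_k}$ plus a remainder, to which you apply Lindeberg--Feller. The two routes produce the same object --- your weights $W_{k,n}^{(m)}=\prod_{l>k}\bigl(1-(m+1)c_l^*\bigr)$ agree with the paper's $e^{-(m+1)C_{k+1,n}^*}$ to leading order, and the variance computations coincide --- but they buy different things: the martingale decomposition delivers conditional centering for free and never needs to propagate errors through a recursion, whereas your recursion makes the deterministic-weight structure, the diagonalization, and the role of $\alpha<2$ (via $2(m+1)/\alpha>1$) structurally transparent, and lets you invoke the CLT in its simplest form since the leading summands are genuinely independent in $k$, not merely a martingale difference array. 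Both routes require the same a priori input from Theorem \ref{a} to control the nonlinearity.

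One small slip: when you address the mode-mixing term $2c_{n+1}^*\frac{z e^{i\theta_{n+1}}}{z-e^{i\theta_{n+1}}}\Lambda_n'(z)$, your final sentence says it is negligible because it carries an extra factor $c_{n+1}^*$ relative to the main increment. In fact the extra small factor is $\Lambda_n'(z)$, controlled by Theorem \ref{a} and Cauchy estimates, exactly as you describe in the dichotomy a few lines earlier; the term is also conditionally centred in $\theta_{n+1}$, which is what lets you handle it in $L^2$ by orthogonality. The logic is right; only that sentence misattributes the source of the smallness. Beyond that the proposal is a sketch rather than a complete proof --- the Gronwall-type bootstrap giving $\mathbb{E}|\lambda_n^{(m)}|^2\lesssim 1/(n(2m+2-\alpha))$ and the tightness estimate both need to be carried out --- but no step as stated would fail.
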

Notice that it is clear this result does not hold for $\alpha=0$ or $\alpha=2$. This is in contrast to \cite{viklund2015small} where results hold for all $\alpha\geq 0$ and suggests a phase transition at these values.

\subsection{Outline of the paper}
The outline of the paper is as follows. In Section 2 we will show that for clusters formed by composing maps of constant capacity and of a certain form, we can not pick a sequence of angles so that the limiting cluster under capacity rescaling approaches a disk. In particular, under capacity rescaling HL$(0)$ is not a growing disk. Then in Section 3 we will show that our choice of capacities is a good approximation to the regularisation of HL$(\alpha)$ at $\infty$.  In Section 4, we show that the pointwise limit of the cluster for $0<\alpha<2$ is a disk and then in Section 5 we will use a Borel-Cantelli argument to show we have uniform convergence on the exterior disk. Finally, in Section 6 we will evaluate the fluctuations for $0<\alpha<2$ and show that they are distributed according to a Gaussian field dependent on $\alpha$.
\section{The case where $\alpha=0$}
We want to evaluate the limiting shape of our random cluster. We first deal with the case where $\alpha=0$. We will show in this section that in the limit HL$(0)$ does not approach a disk. Furthermore, we will prove a stronger statement that for clusters formed by composing maps of constant capacity, in the class of particles defined in (\ref{eqn}), we can not approach a disk under capacity rescaling. We note that in the case where $\alpha=0$ our regularisation does not effect the model, so this result holds for HL$(0)$ under no regularisation. Our proof is reliant on the fact that under capacity rescaling the limit for HL$(0)$ exists, this result was proved by Rhode and Zinsmeister in \cite{rohde2005some}. One might expect, given this result, that the scaling limit is a growing disk, this would agree with the result in the small particle limit \cite{norris2012hastings}. However, the following theorem proves this does not hold.
\\
\begin{thm1}
Given any sequence $\{\theta_k\}_{1\leq k\leq n}$ of angles between $0$ and $2\pi$ and $c>0$, set $\Psi_n=f_1\circ...\circ f_n$ where $f_k(z)=e^{i\theta_k}f_c(e^{-i\theta_k} z)$ and $f_c(z)$ is any fixed capacity map in the class of particles given by (\ref{eqn}). There exists some $c_0>0$, which depends only on $\tilde{\lambda}$ such that for all $0<c<c_0$, there exists an $\epsilon>0$ such that for all $r>1$,
$$\limsup_{n\to \infty} \sup_{|z|>r}|e^{-cn}\Psi_{n}(z)-z|>\epsilon . $$
In particular if  $\{\theta_k\}_{1\leq k\leq n}$ are i.i.d uniform on $[0,2 \pi]$ then $\Psi_n$ is the HL$(0)$ process and the statement above shows that HL$(0)$ does not converge to a disk under capacity rescaling.
\end{thm1}

\begin{proof}
First suppose this does not hold. Then for any $\epsilon>0$,
$$\limsup_{n\to \infty} \sup_{|z|>r}|e^{-cn}\Psi_{n}(z)-z|<\epsilon.$$
Then we can write,
$$ |e^{-cn}\Psi_{n}(z)-z|=\left|\left(e^{-c}f_{n}(z)-z\right)+e^{-c}\left(e^{-c(n-1)}\Psi_{n-1}(f_{n}(z))-f_{n}(z)\right)\right|$$
which we can bound below for all $|z|>r$ as follows,
$$ |e^{-cn}\Psi_{n}(z)-z|\geq |e^{-c}f_{n}(z)-z|- \sup_{|z|>r}|e^{-c}||e^{-c(n-1)}\Psi_{n-1}(f_{n}(z))-f_{n}(z)|.$$
We can then take the supremum of both sides, and use that $|f_{n}(z)|>r$ for all $|z|>r$, to reach the following bound on the supremum,
\begin{equation}\label{bound}
\sup_{|z|>r}|e^{-cn}\Psi_{n}(z)-z|\geq \sup_{|z|>r}|e^{-c}f_{n}(z)-z|- \sup_{|z|>r}|e^{-c}||e^{-c(n-1)}\Psi_{n-1}(z)-z|.
\end{equation}
So consider $\sup_{|z|>r}|e^{-c}f_{n}(z)-z|$. Using the definition of $f_{c}(z)=e^{c}z\exp\left(\frac{2c}{z-1}+\delta_c \left(z\right)\right)$ we can bound this below by, 
$$\sup_{|z|>r}|e^{-c}f_{n}(z)-z|\geq \sup_{|z|>r}|z|\left|\exp\left(\frac{2c}{z-1}+\delta_c \left(z\right)\right)-1\right|.$$
Then by using the integral form of Taylor's remainder formula we see that for any complex $x$, $|e^x-(1+x)|\leq |x|^2 e^{|x|}$ and therefore,
$$|e^x-1|\geq |x|-|x|^2 e^{|x|}.$$
Hence, we can find a lower bound on the expression above,
\begin{align*}
\sup_{|z|>r}|e^{-c}f_{n}(z)-z|
&\geq \sup_{|z|>r}|z|\left(\left|\frac{2c}{z-1}+\delta_c \left(z\right)\right|-\left|\frac{2c}{z-1}+\delta_c \left(z\right)\right|^2\exp\left(\left|\frac{2c}{z-1}+\delta_c \left(z\right)\right|\right)\right)\\
&\geq \lim_{z\to \infty}|z|\left(\left|\frac{2c}{z-1}+\delta_c \left(z\right)\right|-\left|\frac{2c}{z-1}+\delta_c \left(z\right)\right|^2\exp\left(\left|\frac{2c}{z-1}+\delta_c \left(z\right)\right|\right)\right)\\
&\geq 2c-\tilde{\lambda}c^{\frac{3}{2}}
\end{align*}
where $\tilde{\lambda}$ is defined as in (\ref{eqn}). Therefore, combining this inequality with the inequality (\ref{bound}) gives,
$$\sup_{|z|>r}|e^{-cn}\Psi_{n}(z)-z|\geq 2c-\tilde{\lambda}c^{\frac{3}{2}}- \sup_{|z|>r}|e^{-c}||e^{-c(n-1)}\Psi_{n-1}(z)-z|.$$
Taking the limit supremum and using our initial assumption we have,
\begin{align*}
\limsup_{n\to \infty} \sup_{|z|>r}|e^{-cn}\Psi_{n}(z)-z|&\geq 2c-\tilde{\lambda}c^{\frac{3}{2}}-e^{-c}\epsilon.
\intertext{ So choose $0\leq c\leq \frac{4}{\tilde{\lambda}^2}$, then $2c-\tilde{\lambda}c^{\frac{3}{2}}>0$ and if we choose $\epsilon=\frac{2c-\tilde{\lambda}c^{\frac{3}{2}}}{(1+e^{-c})}>0$ then}
\limsup_{n\to \infty} \sup_{|z|>r}|e^{-cn}\Psi_{n}(z)-z|&\geq \epsilon
\end{align*}
a contradiction. 
\end{proof}
This is a strong result because it proves that if we have a cluster which is composed of functions of the right form, no matter how we pick our sequence of attaching angles $\{\theta_{n} \}$ the limiting structure of the cluster, when rescaled by its capacity, does not approach a disk. 
\section{Regularisation}
The aim of this section is to provide some justification for the choice of $c_n^{*}$ as an approximation to the regularisation of HL$(\alpha)$ at $\infty$. Recall that we choose,
$$c_n^*=\frac{c}{1+\alpha c(n-1)}.$$ We start by providing some notation used throughout the remainder of the paper. 
Let $\phi_k$ and $c_i^*$ be defined as above, then  we denote $$C^{*}_{k,n}=\sum_{i=k}^n c_i^*.$$

\subsection{Error term evaluation}
In order to more easily apply complex analysis methods to our cluster we would like to write the sum $C_{1,n}^{*}$ in a simplified form. We do so by providing the following approximation on the sum, subject to an error term which converges to $0$, uniformly in $k$, as $n\to \infty$.
\begin{lemma}\label{lemma2.2}
For $c_n^*=\frac{c}{1+\alpha c(n-1)}$ we have the following equality;
$$C_{k,n}^{*}=\frac{1}{\alpha}\log\left(\frac{1+\alpha c n}{1+\alpha c(k-1)}\right)(1+\epsilon_{k,n})$$ where $$0<\epsilon_{k,n}<\frac{\alpha^2 c^2 (n-k+1)}{(1+\alpha c(k-1))(1+\alpha c n)\log\left(\frac{1+\alpha c n}{1+\alpha c(k-1)}\right)}\leq \frac{\alpha c}{\log(1+\alpha c n)}.$$
Therefore, $\epsilon_{k,n}\to 0$, uniformly in $k$, as $n\to \infty$. 
\end{lemma}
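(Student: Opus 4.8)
The plan is to compare the sum $C_{k,n}^* = \sum_{i=k}^n \frac{c}{1+\alpha c(i-1)}$ with the integral $\int_{k-1}^{n} \frac{c\,dx}{1+\alpha c x}$, which evaluates exactly to $\frac{1}{\alpha}\log\left(\frac{1+\alpha c n}{1+\alpha c(k-1)}\right)$. Since $x\mapsto \frac{c}{1+\alpha c x}$ is positive and strictly decreasing on $[k-1,n]$, a standard comparison of a decreasing function with its left-endpoint Riemann sum gives
$$\int_{k-1}^{n}\frac{c\,dx}{1+\alpha c x} \;<\; C_{k,n}^* \;<\; \int_{k-1}^{n}\frac{c\,dx}{1+\alpha c x} + \left(\frac{c}{1+\alpha c(k-1)} - \frac{c}{1+\alpha c n}\right),$$
where the last term is the total drop of the summand across the interval, i.e.\ $c_k^* - c_{n+1}^*$. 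This immediately yields the lower bound $C_{k,n}^* > \frac{1}{\alpha}\log\left(\frac{1+\alpha c n}{1+\alpha c(k-1)}\right)$, so $\epsilon_{k,n}>0$, and defines $\epsilon_{k,n}$ as the ratio of the overshoot to the main term.

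Next I would bound the overshoot. Writing $\frac{c}{1+\alpha c(k-1)} - \frac{c}{1+\alpha c n}$ over a common denominator gives $\frac{\alpha c^2 (n-k+1)}{(1+\alpha c(k-1))(1+\alpha c n)}$, and dividing by the main term $\frac{1}{\alpha}\log\left(\frac{1+\alpha c n}{1+\alpha c(k-1)}\right)$ produces exactly the claimed first upper bound on $\epsilon_{k,n}$. For the second, cruder bound, I would use that $\frac{n-k+1}{(1+\alpha c(k-1))(1+\alpha c n)} \le \frac{n-k+1}{1+\alpha c n} \le \frac{1}{\alpha c}$ — the first step because $1+\alpha c(k-1)\ge 1$, and the second because $1 + \alpha c n \ge \alpha c n \ge \alpha c(n-k+1)$ — together with $\log\left(\frac{1+\alpha c n}{1+\alpha c(k-1)}\right)\ge \log\left(\frac{1+\alpha c n}{1+\alpha c n}\cdot\frac{1+\alpha c n}{1}\right)$... more carefully, since $1+\alpha c(k-1)\le 1+\alpha c n$ does not bound the log below, I instead only claim the log in the denominator is at least $\log(1+\alpha c n) - \log(1+\alpha c(k-1))$ and combine with the convention that this expression is largest when $k=1$; bounding $\epsilon_{k,n}\le \frac{\alpha c}{\log(1+\alpha c n)}$ follows once one checks the worst case is $k=1$, where the first bound reads $\frac{\alpha^2 c^2 n}{(1+\alpha c n)\log(1+\alpha c n)}\le \frac{\alpha c}{\log(1+\alpha c n)}$.

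Finally, since $\log(1+\alpha c n)\to\infty$ as $n\to\infty$ and the bound $\frac{\alpha c}{\log(1+\alpha c n)}$ is independent of $k$, we get $\epsilon_{k,n}\to 0$ uniformly in $k$, completing the proof. The only mildly delicate point — and the one I would be most careful about — is verifying that the bound $\epsilon_{k,n}\le \frac{\alpha c}{\log(1+\alpha c n)}$ genuinely holds for all $k$ and not just $k=1$; this amounts to checking that the function $k\mapsto \frac{\alpha^2 c^2(n-k+1)}{(1+\alpha c(k-1))(1+\alpha cn)\log\left(\frac{1+\alpha cn}{1+\alpha c(k-1)}\right)}$ is decreasing in $k$, or at least bounded by its value at $k=1$, which can be done by a monotonicity argument on each factor (numerator decreases in $k$, the factor $1+\alpha c(k-1)$ increases, and the log decreases) — so one must confirm the log shrinking does not outpace the numerator. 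A clean way around this is to bound $\frac{n-k+1}{1+\alpha c(k-1)}$ directly and note $\frac{t-1}{\log t}$ with $t = \frac{1+\alpha cn}{1+\alpha c(k-1)}$ is increasing in $t$ hence maximised at $k=1$, which dispatches the worst case cleanly.
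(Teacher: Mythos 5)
Your proof is correct and follows essentially the same route as the paper: compare $C_{k,n}^*$ with the integral $\int_{k-1}^n \frac{c\,dx}{1+\alpha c x}$, observe that the left Riemann sum of a decreasing function overshoots the integral by at most $c_k^* - c_{n+1}^* = \frac{\alpha c^2(n-k+1)}{(1+\alpha c(k-1))(1+\alpha c n)}$ (the paper writes this as the telescoping sum $\sum_{i=k}^n(c_i^* - c_{i+1}^*)$, which is the same quantity), and then argue that the resulting bound on $\epsilon_{k,n}$ is maximised at $k=1$. The one genuine difference is the last step: the paper sets $x=1+\alpha c(k-1)$, differentiates, and argues the derivative is negative; your ``clean way'' substitutes $t = \frac{1+\alpha cn}{1+\alpha c(k-1)}$ and uses that $\frac{t-1}{\log t}$ is increasing in $t>1$, which is in fact tidier and avoids the paper's somewhat terse claim about the sign of the numerator. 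The middle of your writeup contains a couple of abandoned attempts (the bound $\frac{n-k+1}{(1+\alpha cn)}\le \frac{1}{\alpha c}$ alone does not control the shrinking logarithm, as you note), which should be cut in a final version, but the argument you ultimately land on is sound.
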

\begin{proof}
We will approximate the sum with 
\begin{align*}
\frac{1}{\alpha}\log\left(\frac{1+\alpha c n}{1+\alpha c(k-1)}\right)=\int_k^{n+1} \frac{c}{1+\alpha c(x-1)} dx .
\end{align*}
Then
\begin{align*}
C_{k,n}^{*}-\frac{1}{\alpha}\log\left(\frac{1+\alpha c n}{1+\alpha c(k-1)}\right)&=\sum_{i=k}^n \left( c_i^{*}-\int_i^{i+1} \frac{c}{1+\alpha c(x-1)} dx \right)\\
&\leq \sum_{i=k}^n \left(c_i^* -c_{i+1}^* \right)\\
&=\frac{\alpha c^2 (n-k+1)}{(1+\alpha c(k-1))(1+\alpha c n)}.
\intertext{Thus,} 0<\epsilon_{k,n} &<\frac{\alpha^2 c^2 (n-k+1)}{(1+\alpha c(k-1))(1+\alpha c n)\log\left(\frac{1+\alpha c n}{1+\alpha c(k-1)}\right)}.
\end{align*}
So we consider,
\begin{align*}
&\sup_{k\leq n}\frac{\alpha^2 c^2 (n-k+1)}{(1+\alpha c(k-1))(1+\alpha c n)\log\left(\frac{1+\alpha c n}{1+\alpha c(k-1)}\right)}\\
&=\frac{\alpha^2 c^2}{1+\alpha c n}\sup_{k\leq n}\frac{n-k+1}{(1+\alpha c(k-1))\log\left(\frac{1+\alpha c n}{1+\alpha c(k-1)}\right)}.
\end{align*}
So let us find, 
$$\sup_{k\leq n}\frac{n-k+1}{(1+\alpha c(k-1))\log\left(\frac{1+\alpha c n}{1+\alpha c(k-1)}\right)}.$$
Let $x=1+\alpha c (k-1)$ and find the derivative
\begin{align*}
\frac{d}{dx}\left(\frac{1+\alpha c n -x}{x \log\left(\frac{1+\alpha c n }{x}\right)}\right)
=\frac{(1+\alpha c n )-(1+\alpha c n) \log\left(\frac{1+\alpha c n }{x}\right) -x }{x^2 \left(\log\left(\frac{1+\alpha c n }{x}\right)\right)^2}.
\end{align*}
The numerator in this fraction is increasing and from this it is clear that the derivative is negative. Therefore the maximum occurs when $k=1$. Thus,
$$0\leq \epsilon_{k,n}\leq \frac{\alpha^2 c^2}{1+\alpha c n}\frac{n}{\log\left(1+\alpha c n\right)}\leq \frac{\alpha c}{\log(1+\alpha c n)}.$$
Furthermore, taking the limit as $n\to \infty$ we have $\epsilon_{k,n}\to 0$, uniformly in $k$, as claimed.
\end{proof}
The following corollary provides a nice bound on $(1+\alpha c k)^{1+\epsilon_{k,n}}$ which will make computations in later sections easier. 
\begin{corollary}\label{3.3}
Let $\epsilon_{k,n}$ be defined as in Lemma \ref{lemma2.2}. Then for $1\leq k\leq n$ and $\alpha \geq 0$ the following bound holds,
$$(1+\alpha c k)^{1+\epsilon_{k,n}}\leq (1+\alpha ce^{\alpha c})(1+\alpha c k).$$
\end{corollary}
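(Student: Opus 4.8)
The plan is to peel off one factor of $1+\alpha c k$ and control the leftover exponent using the explicit bound on $\epsilon_{k,n}$ from Lemma~\ref{lemma2.2}. Write
$$(1+\alpha c k)^{1+\epsilon_{k,n}}=(1+\alpha c k)\,(1+\alpha c k)^{\epsilon_{k,n}},$$
so it suffices to bound the factor $(1+\alpha c k)^{\epsilon_{k,n}}$. Since $k\leq n$ and $\epsilon_{k,n}\geq 0$ (and $1+\alpha ck\geq 1$ as $\alpha,c\geq 0$), monotonicity of $x\mapsto x^{\epsilon_{k,n}}$ gives $(1+\alpha c k)^{\epsilon_{k,n}}\leq (1+\alpha c n)^{\epsilon_{k,n}}$.

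Next I would feed in the upper bound $\epsilon_{k,n}\leq \frac{\alpha c}{\log(1+\alpha c n)}$ established in Lemma~\ref{lemma2.2}. Writing $(1+\alpha c n)^{\epsilon_{k,n}}=\exp\!\big(\epsilon_{k,n}\log(1+\alpha c n)\big)$, this bound on $\epsilon_{k,n}$ makes the exponent at most $\alpha c$, hence $(1+\alpha c n)^{\epsilon_{k,n}}\leq e^{\alpha c}$. Combining the two displays yields
$$(1+\alpha c k)^{1+\epsilon_{k,n}}\leq e^{\alpha c}\,(1+\alpha c k).$$

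It then remains to check the purely elementary inequality $e^{\alpha c}\leq 1+\alpha c\,e^{\alpha c}$, which upgrades the bound to the stated $(1+\alpha c e^{\alpha c})(1+\alpha c k)$ after multiplying through by the positive quantity $1+\alpha c k$. This inequality is equivalent to $e^{x}(1-x)\leq 1$ for $x=\alpha c\geq 0$, and since the left-hand side equals $1$ at $x=0$ and has derivative $-x e^{x}\leq 0$, it is nonincreasing on $[0,\infty)$, so the inequality holds. (The case $\alpha=0$ is the trivial identity $1\leq 1$.) There is no genuine obstacle here; the only point requiring a moment's care is justifying the last elementary inequality rather than sloppily replacing $e^{\alpha c}$ by $1+\alpha c e^{\alpha c}$ without comment.
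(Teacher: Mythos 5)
Your proof is correct and follows essentially the same route as the paper: peel off the factor $(1+\alpha c k)$, then control $(1+\alpha c k)^{\epsilon_{k,n}}$ via the bound $\epsilon_{k,n}\leq \alpha c/\log(1+\alpha c n)$ together with $k\leq n$. The only cosmetic difference is that you land first on the cleaner (indeed slightly sharper) intermediate bound $e^{\alpha c}(1+\alpha c k)$ and then weaken it by the elementary estimate $e^{x}\leq 1+xe^{x}$, whereas the paper applies $e^{x}-1\leq xe^{x}$ directly to $\delta_{k,n}=(1+\alpha c k)^{\epsilon_{k,n}}-1$ to reach the constant $1+\alpha c e^{\alpha c}$ in one step.
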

\begin{proof}
We can write 
$$(1+\alpha c k)^{1+\epsilon_{k,n}}=(1+\alpha c k)(1+\alpha c k)^{\epsilon_{k,n}}=(1+\alpha c k)(1+(1+\alpha c k)^{\epsilon_{k,n}}-1).$$
So let $\delta_{k,n}=(1+\alpha c k)^{\epsilon_{k,n}}-1$, then
$$\delta_{k,n}= (e^{\epsilon_{k,n}\log(1+\alpha c k)}-1)\leq \epsilon_{k,n}\log(1+\alpha c k)e^{\epsilon_{k,n}\log(1+\alpha c k)}.$$
We have just shown that 
$$|\epsilon_{k,n}|\leq \frac{\alpha c}{\log\left(1+\alpha c n\right)}.$$
So, $$0\leq |\delta_{k,n}|\leq \alpha ce^{\alpha c}.$$
Therefore, 
$$(1+\alpha c k)^{1+\epsilon_{k,n}}\leq (1+\alpha c k)(1+\alpha ce^{\alpha c}).$$
\end{proof}
\subsection{Regularisation approximation}
With the estimates provided above we can now provide justification for our choice of $c_n^{*}$. We start by providing some more notation. For each $n\in \mathbb{N}$,  $c$ defined as above we  denote $\phi^{\infty}_n=\phi^{\infty}_{n-1}\circ f^{\infty}_n$ where $f^{\infty}_n(z)=e^{i\theta_n} f_{\tilde{c}_n}(ze^{-i \theta_n})$ with $\theta_n$ i.i.d uniform on $[0,2\pi]$ and
$$\tilde{c}_n=\frac{c}{\left|\left(\phi^{\infty}_{n-1}\right)'(\infty)\right|^{\alpha}}.$$ Furthermore, we define,
$$\tilde{C}_{k,n}=\sum_{i=k}^{n} \tilde{c}_i.$$
The maps $\phi^{\infty}_n$ correspond to the true model for HL$(\alpha)$ regularised at $\infty$. The aim of the remainder of this section will be to prove the following theorem.
\begin{proposition}\label{reg}
For $C_{1,n}^{*}$ and $\tilde{C}_{1,n}$ defined as above, the following inequality holds,
\begin{align*}
0\leq C_{1,n}^{*}-\tilde{C}_{1,n}\leq 6 c
\end{align*}
Furthermore, 
\begin{align*}
\tilde{c}_n=c_n^{*}(1+\epsilon_n^{\infty})
\end{align*}
where $\epsilon_n^{\infty}\to 0$, uniformly in $n$, as $c\to 0$.
\end{proposition}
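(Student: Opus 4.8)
The plan is to reduce the statement to a one-step recursion for the (deterministic) numbers $\tilde C_{1,n}$ and then run an induction comparing them against the integral approximation from Lemma \ref{lemma2.2}. Since each $f_{\tilde c_i}$ has logarithmic capacity $\tilde c_i$ and the rotations do not affect the derivative at infinity, $\left(\phi^{\infty}_{n}\right)'(\infty)=e^{\tilde C_{1,n}}$ is a positive real, so
$$\tilde c_{n+1}=\frac{c}{\left|\left(\phi^{\infty}_{n}\right)'(\infty)\right|^{\alpha}}=c\,e^{-\alpha\tilde C_{1,n}},\qquad \tilde C_{1,n+1}=g\!\left(\tilde C_{1,n}\right),\quad g(x):=x+c\,e^{-\alpha x},$$
with $\tilde C_{1,1}=\tilde c_1=c$ because $\phi^{\infty}_0$ is the identity. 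After shrinking $c_0$ if necessary so that $2c_0<1$ (recall $\alpha<2$), the map $g$ is strictly increasing on $[0,\infty)$, since $g'(x)=1-\alpha c\,e^{-\alpha x}\geq 1-\alpha c>0$ there.

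The key claim I would establish, by induction on $n$, is $\tilde C_{1,n}\geq\frac1\alpha\log(1+\alpha cn)$. The base case is the elementary bound $\log(1+\alpha c)\leq\alpha c$. For the inductive step, monotonicity of $g$ and the hypothesis give
$$\tilde C_{1,n+1}=g\!\left(\tilde C_{1,n}\right)\geq g\!\left(\tfrac1\alpha\log(1+\alpha cn)\right)=\tfrac1\alpha\log(1+\alpha cn)+\tfrac{c}{1+\alpha cn},$$
and it remains to check $\frac{c}{1+\alpha cn}\geq\frac1\alpha\log\!\left(1+\frac{\alpha c}{1+\alpha cn}\right)=\frac1\alpha\log(1+\alpha c(n+1))-\frac1\alpha\log(1+\alpha cn)$, once more from $\log(1+y)\leq y$. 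An equivalent reading of the claim is $e^{-\alpha\tilde C_{1,n}}\leq(1+\alpha cn)^{-1}$, i.e.\ $\tilde c_{n+1}\leq c^{*}_{n+1}$; since also $\tilde c_1=c^{*}_1$, every term of $C^{*}_{1,n}-\tilde C_{1,n}=\sum_{i=1}^{n}(c^{*}_i-\tilde c_i)$ is non-negative, which gives the lower bound $0\leq C^{*}_{1,n}-\tilde C_{1,n}$.

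For the upper bound I would combine the claim with Lemma \ref{lemma2.2}: writing $C^{*}_{1,n}=\frac1\alpha\log(1+\alpha cn)(1+\epsilon_{1,n})$ and using $\tilde C_{1,n}\geq\frac1\alpha\log(1+\alpha cn)$,
$$0\leq C^{*}_{1,n}-\tilde C_{1,n}\leq\frac1\alpha\log(1+\alpha cn)\,\epsilon_{1,n}\leq\frac1\alpha\log(1+\alpha cn)\cdot\frac{\alpha c}{\log(1+\alpha cn)}=c\leq 6c,$$
using the bound $\epsilon_{1,n}\leq\frac{\alpha c}{\log(1+\alpha cn)}$ from Lemma \ref{lemma2.2}. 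For the multiplicative statement I would write $\tilde c_n/c^{*}_n=(1+\alpha c(n-1))\,e^{-\alpha\tilde C_{1,n-1}}$; the claim gives $\tilde c_n/c^{*}_n\leq 1$, while the companion bound $\tilde C_{1,n-1}\leq C^{*}_{1,n-1}=\frac1\alpha\log(1+\alpha c(n-1))(1+\epsilon_{1,n-1})$ gives $\tilde c_n/c^{*}_n\geq(1+\alpha c(n-1))^{-\epsilon_{1,n-1}}\geq e^{-\alpha c}$, again by Lemma \ref{lemma2.2}, the case $n=1$ being trivial. Hence $\epsilon^{\infty}_n:=\tilde c_n/c^{*}_n-1\in[e^{-\alpha c}-1,0]$, so $\epsilon^{\infty}_n\to 0$ uniformly in $n$ (indeed uniformly in $\alpha\in(0,2)$) as $c\to 0$.

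The hard part will be the induction in the second paragraph: ensuring $g$ is monotone over the range that actually matters, and lining up the two applications of $\log(1+y)\leq y$ in the correct directions. Once the claim is in place, the rest is bookkeeping with the estimates already recorded in Lemma \ref{lemma2.2}.
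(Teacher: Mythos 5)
Your argument is correct and takes a genuinely different — and in fact cleaner — route than the paper. The paper first isolates a one-step recursion for $C_{1,n}^{*}$ with an additive error $\kappa_n$ (its Lemma \ref{kap}), then forms a recursion for the difference $C_{1,n}^{*}-\tilde C_{1,n}$, Taylor-expands the exponential, unrolls the recursion into a sum $\sum_j \tilde\kappa_j\prod_k\rho_k$, and bounds the products and partial sums to arrive at $6c$. You sidestep the entire Taylor/product-of-$\rho$'s machinery: you instead observe that $\tilde C_{1,n}$ evolves under the monotone one-step map $g(x)=x+ce^{-\alpha x}$, and that $\tfrac1\alpha\log(1+\alpha cn)$ is a sub-solution — verified by two applications of $\log(1+y)\leq y$, one for the base case and one after $g$ is applied. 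This single comparison $\tilde C_{1,n}\geq\tfrac1\alpha\log(1+\alpha cn)$ immediately gives $\tilde c_i\leq c_i^{*}$ termwise (hence the lower bound $0\leq C_{1,n}^{*}-\tilde C_{1,n}$), and combined with Lemma \ref{lemma2.2} it yields the upper bound, even with the sharper constant $c$ in place of $6c$. The multiplicative statement then falls out exactly as in the paper. What your approach buys is the elimination of the remainder analysis (no $\kappa_n$, no $\tilde\kappa_n$, no products $\prod\rho_k$) in favour of a discrete Gr\"onwall-type comparison, which is shorter, conceptually cleaner, and gives a tighter constant; what the paper's approach retains is an explicit decomposition of the error into contributions from each step, which is not needed for the stated result. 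Both arguments are sound.
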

Therefore if we choose our $c$ sufficiently small we see that our regularisation is a good approximation to regularisation at infinity. In order to prove Proposition \ref{reg} we first form a difference equation on $C_{1,n}^{*}$. 
\begin{lemma}\label{kap}
With $C_{1,n}^{*}$ defined as above the following equality holds
$$C_{1,n}^{*}=C_{1,n-1}^{*}+ce^{-\alpha C_{1,n-1}^{*}}+\kappa_n$$
where $0\leq \kappa_n\leq \frac{2\alpha c^2}{1+\alpha c(n-1)}$.
\end{lemma}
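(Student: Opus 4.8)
The plan is to identify $\kappa_n$ explicitly and then bound it on both sides using elementary comparisons between the sum defining $C_{1,n-1}^{*}$ and the corresponding logarithm. Since $C_{1,n}^{*}-C_{1,n-1}^{*}=c_n^{*}=\frac{c}{1+\alpha c(n-1)}$ by definition, the claimed identity forces
$$\kappa_n=\frac{c}{1+\alpha c(n-1)}-c\,e^{-\alpha C_{1,n-1}^{*}},$$
so the whole lemma reduces to controlling $e^{-\alpha C_{1,n-1}^{*}}$, equivalently the quantity $u_n:=\alpha C_{1,n-1}^{*}-\log(1+\alpha c(n-1))$.

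First I would write $\log(1+\alpha c(n-1))=\int_0^{n-1}\frac{\alpha c}{1+\alpha c x}\,dx=\sum_{i=1}^{n-1}\int_{i-1}^{i}\frac{\alpha c}{1+\alpha c x}\,dx$ and $\alpha C_{1,n-1}^{*}=\sum_{i=1}^{n-1}\frac{\alpha c}{1+\alpha c(i-1)}$, so that
$$u_n=\sum_{i=1}^{n-1}\left(\frac{\alpha c}{1+\alpha c(i-1)}-\int_{i-1}^{i}\frac{\alpha c}{1+\alpha c x}\,dx\right).$$
Since $x\mapsto\frac{\alpha c}{1+\alpha c x}$ is decreasing, bounding the integrand below by its value at the left endpoint $i-1$ shows each summand is $\geq 0$, hence $u_n\geq 0$; this gives $e^{-\alpha C_{1,n-1}^{*}}\leq\frac{1}{1+\alpha c(n-1)}$ and therefore $\kappa_n\geq 0$. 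For the upper bound I would instead bound the integrand below by its value at the right endpoint $i$, so that $\int_{i-1}^{i}\frac{\alpha c}{1+\alpha c x}\,dx\geq\frac{\alpha c}{1+\alpha c i}$; then each summand is at most $\frac{\alpha c}{1+\alpha c(i-1)}-\frac{\alpha c}{1+\alpha c i}$ and the sum telescopes to $\alpha c\left(1-\frac{1}{1+\alpha c(n-1)}\right)\leq\alpha c$.

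Finally I would convert the bound on $u_n$ back into a bound on $\kappa_n$. Writing $\kappa_n=\frac{c}{1+\alpha c(n-1)}\left(1-(1+\alpha c(n-1))e^{-\alpha C_{1,n-1}^{*}}\right)=\frac{c}{1+\alpha c(n-1)}\left(1-e^{-u_n}\right)$ and using $1-e^{-u}\leq u$ for $u\geq 0$ gives
$$\kappa_n\leq\frac{c}{1+\alpha c(n-1)}\,u_n\leq\frac{\alpha c^2}{1+\alpha c(n-1)}\leq\frac{2\alpha c^2}{1+\alpha c(n-1)},$$
which is in fact slightly stronger than stated.

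I expect no genuine obstacle here: the only point requiring care is choosing the correct endpoint (left versus right) in the two monotone Riemann-sum comparisons, so that one yields nonnegativity of $u_n$ and the other telescopes cleanly, together with the elementary estimate $1-e^{-u}\leq u$. One could alternatively feed the expansion $\alpha C_{1,n-1}^{*}=\log(1+\alpha c(n-1))(1+\epsilon_{1,n-1})$ from Lemma \ref{lemma2.2} directly into $e^{-\alpha C_{1,n-1}^{*}}$, but the direct telescoping argument above is shorter and produces the sharp constant.
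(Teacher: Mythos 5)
Your proof is correct and follows essentially the same route as the paper: the paper uses Lemma \ref{lemma2.2} to write $e^{-\alpha C_{1,n-1}^{*}}=(1+\alpha c(n-1))^{-(1+\epsilon_{1,n-1})}$ and Taylor-expands, whereas you re-derive the underlying bound $0\leq u_n=\epsilon_{1,n-1}\log(1+\alpha c(n-1))\leq\alpha c$ directly by the same Riemann-sum comparison and telescoping that appear inside the proof of Lemma \ref{lemma2.2}, then apply $1-e^{-u}\leq u$. Your version is self-contained and yields the slightly sharper constant $\alpha c^{2}$ (rather than $2\alpha c^{2}$) without the paper's ``for $c$ sufficiently small'' hedge.
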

\begin{proof}
Let 
\begin{align*}
\kappa_n &=\left(C_{1,n}^{*}-C_{1,n-1}^{*}\right)-ce^{-\alpha C_{1,n-1}^{*}}.\intertext{Then by the definition of $C_{1,n}^{*}$,}\kappa_n&=c_n^{*}-ce^{-\alpha C_{1,n-1}^{*}}. \intertext{Thus, using the approximation from Lemma \ref{lemma2.2},}
\kappa_n&=c_n^{*}-\frac{c}{(1+\alpha c(n-1))^{1+\epsilon_{1,n-1}}}\\
&=\frac{c}{1+\alpha c(n-1)}\left(1-\frac{1}{(1+\alpha c(n-1))^{\epsilon_{1,n-1}}}\right)\\
&=\frac{c}{1+\alpha c(n-1)}\left(1-\exp\left(-\epsilon_{1,n-1}\log(1+\alpha c(n-1))\right)\right).
\intertext{Since $\epsilon_{1,n-1}$ is small for small $c$ we can Taylor expand the exponential to get,}
\kappa_n&=\frac{c}{1+\alpha c(n-1)}\left(\epsilon_{1,n-1}\log(1+\alpha c(n-1))-r(n,c)\right).
\end{align*}
where $r(n,c)$ is the remainder term in the Taylor expansion. From Lemma \ref{lemma2.2} we know $0\leq \epsilon_{1,n-1}\leq \frac{\alpha c}{\log(1+\alpha c(n-1))}$. Moreover, $0\leq r(n,c)\leq e^{\alpha c}(\epsilon_{1,n-1}\log(1+\alpha c(n-1)))^2$, so for $c$ sufficiently small,
$$0\leq \kappa_n\leq \frac{2\alpha c^2}{1+\alpha c(n-1)}.$$
\end{proof}
We can now show that $C_{1,n}^{*}$ and $\tilde{C}_{1,n}$ are sufficiently close by proving Proposition \ref{reg}.
\begin{proof}[Proof of Proposition \ref{reg}]
We will prove the statement inductively. By definition, \newline $C_{1,1}^{*}-\tilde{C}_{1,1}=0$. So assume, \begin{align*}
0\leq C_{1,n-1}^{*}-\tilde{C}_{1,n-1}\leq 6 c
\end{align*}
Then note that since,
\begin{align*}
\tilde{c}_n&=\frac{c}{\left|\left(\phi^{\infty}_{n-1}\right)'(\infty)\right|^{\alpha}}
\intertext{then}
\tilde{c}_n&=\frac{c}{\left(e^{\tilde{C}_{1,n-1}}\right)^{\alpha}}.
\end{align*}
Furthermore,
\begin{align*}
\tilde{C}_{1,n}&=\tilde{C}_{1,n-1}+\tilde{c}_n. 
\intertext{Therefore,}
\tilde{C}_{1,n}&=\tilde{C}_{1,n-1}+\frac{c}{\left(e^{\tilde{C}_{1,n-1}}\right)^{\alpha}}.
\end{align*}
Thus, by Lemma \ref{kap},
\begin{align*}
C_{1,n}^{*}-\tilde{C}_{1,n}&=\left(C_{1,n-1}^{*}-\tilde{C}_{1,n-1}\right)+c\left(e^{-\alpha C_{1,n-1}^{*}}-e^{-\alpha \tilde{C}_{1,n-1}}\right)+\kappa_n \\
&=\left(C_{1,n-1}^{*}-\tilde{C}_{1,n-1}\right)+ce^{-\alpha C_{1,n-1}^{*}}\left(1-e^{\alpha\left(C_{1,n-1}^{*}- \tilde{C}_{1,n-1}\right)}\right)+\kappa_n.
\intertext{Taylor expanding the $e^{\alpha\left(C_{1,n-1}^{*}- \tilde{C}_{1,n-1}\right)}$ term gives,}
C_{1,n}^{*}-\tilde{C}_{1,n}&=\left(C_{1,n-1}^{*}-\tilde{C}_{1,n-1}\right)+c\alpha e^{-\alpha C_{1,n-1}^{*}}\left(\tilde{C}_{1,n-1}-C_{1,n-1}^{*}-r(n,c)\right)+\kappa_n\\
&=\left(C_{1,n-1}^{*}-\tilde{C}_{1,n-1}\right)\left(1-c\alpha e^{-\alpha C_{1,n-1}^{*}}\right)+\left(\kappa_n- r(n,c)c\alpha e^{-\alpha C_{1,n-1}^{*}}\right).
\end{align*}
where $r(n,c)$ is the Taylor remainder term. We know $r(n,c)=\frac{e^{\xi}}{2}\alpha \left(C_{1,n-1}^{*}-\tilde{C}_{1,n-1}\right)^2$ for some $0<\xi<\alpha \left(C_{1,n-1}^{*}-\tilde{C}_{1,n-1}\right)$. Thus, under our assumption that \newline $0\leq C_{1,n-1}^{*}-\tilde{C}_{1,n-1}\leq 6c$, we have, 
\begin{align*}
0&\leq r(n,c)c\alpha e^{-\alpha C_{1,n-1}^{*}}\leq \frac{c^3 \alpha^2 e^{6 \alpha c}}{1+\alpha c(n-1)}.
\intertext{Then if $c$ is small enough}
0&\leq r(n,c)c\alpha e^{-\alpha C_{1,n-1}^{*}}\leq \frac{\kappa_n}{2}.
\end{align*}
Therefore, let $\tilde{\kappa}_n=\kappa_n- r(n,c)c\alpha e^{-\alpha C_{1,n-1}^{*}}$, then $0\leq \tilde{\kappa}_n\leq \frac{2\alpha c^2}{1+\alpha c(n-1)}$. Hence,
\begin{align*}
C_{1,n}^{*}-\tilde{C}_{1,n}&=\left(C_{1,n-1}^{*}-\tilde{C}_{1,n-1}\right)\rho_{n-1}+\tilde{\kappa}_n
\intertext{where $\rho_{n-1}=1-c\alpha e^{-\alpha C_{1,n-1}^{*}}$. So,}
C_{1,n}^{*}-\tilde{C}_{1,n}&=\left(C_{1,n-2}^{*}-\tilde{C}_{1,n-2}\right)\rho_{n-2}\rho_{n-1}+\tilde{\kappa}_{n-1}\rho_{n-1}+\tilde{\kappa}_n\\
&=\left(C_{1,1}^{*}-\tilde{C}_{1,1}\right)\prod_{i=1}^{n-1} \rho_i+\sum_{j=2}^{n-1}\left(\tilde{\kappa}_j\prod_{k=j}^{n-1} \rho_k\right)+\tilde{\kappa}_n
\intertext{but since $\left(C_{1,1}^{*}-\tilde{C}_{1,1}\right)=0$,}
C_{1,n}^{*}-\tilde{C}_{1,n}&=\sum_{j=2}^{n-1}\left(\tilde{\kappa}_j\prod_{k=j}^{n-1} \rho_k\right)+\tilde{\kappa}_n.
\end{align*}
We first analyse $\prod_{k=j}^{n-1} \rho_k$,
\begin{align*}
\prod_{k=j}^{n-1} \rho_k&=\prod_{k=j}^{n-1}\left(1-c\alpha e^{-\alpha C_{1,k-1}^{*}}\right)\\
&=\prod_{k=j}^{n-1}\left(1-\frac{\alpha c}{(1+\alpha c(k-1))^{1+\epsilon_{1,k-1}}}\right)\\
&=\exp\left(\sum_{k=j}^{n-1}\log\left(1-\frac{\alpha c}{(1+\alpha c(k-1))^{1+\epsilon_{1,k-1}}}\right)\right).
\intertext{Using the Taylor expansion of $\log\left(1-\frac{\alpha c}{(1+\alpha c(k-1))^{1+\epsilon_{1,k-1}}}\right)$ we have,}
\prod_{k=j}^{n-1} \rho_k&=\exp\left(\sum_{k=j}^{n-1}\frac{-\alpha c}{(1+\alpha c(k-1))^{1+\epsilon_{1,k-1}}}\right)\exp\left(\sum_{k=j}^{n-1}\tilde{r}(k,c)\right)
\intertext{where $\tilde{r}(k,c)$ is the Taylor remainder term. But since for each $2\leq k\leq n-1$, \newline $\sum_{k=j}^{n-1}\tilde{r}(j,c)\leq 0$ in the expansion of $\log\left(1-\frac{\alpha c}{(1+\alpha c(k-1))^{1+\epsilon_{1,k-1}}}\right)$,}
0\leq \prod_{k=j}^{n-1} \rho_k&\leq \exp\left(\sum_{k=j}^{n-1}\frac{-\alpha c}{(1+\alpha c(k-1))^{1+\epsilon_{1,k-1}}}\right).
\end{align*}
By Corollary \ref{3.3},
\begin{align*}
0\leq (1+\alpha c (k-1))^{1+\epsilon_{1,k-1}}&\leq (1+\alpha ce^{\alpha c})(1+\alpha c (k-1)).
\intertext{Therefore,}
\frac{-\alpha c}{(1+\alpha c(k-1))^{1+\epsilon_{1,k-1}}}&\leq \frac{-\alpha c}{(1+\alpha ce^{\alpha c})(1+\alpha c(k-1))}
\intertext{Thus,}
0\leq \prod_{k=j}^{n-1} \rho_k&\leq e^{\frac{1}{(1+\alpha ce^{\alpha c})}}\frac{1+\alpha c(j-1)}{1+\alpha c (n-2)}.
\end{align*}
Finally we see that,
\begin{align*}
\sum_{j=2}^{n-1}\left(\tilde{\kappa}_j\prod_{k=j}^{n-1} \rho_k\right)&\leq 6 \alpha c^2 \sum_{j=2}^{n-1}\frac{1}{1+\alpha c(j-1)}\frac{1+\alpha c(j-1)}{1+\alpha c (n-2)}\\
&\leq 6 \alpha c^2 \frac{n-3}{1+\alpha c(n-2)}.\\
\intertext{Thus for $c$ small enough,}
0\leq \sum_{j=2}^{n-1}\left(\tilde{\kappa}_j\prod_{k=j}^{n-1} \rho_k\right)&\leq 6c.
\end{align*}
Hence, since for all $n$, and $c$ sufficiently small, $$0\leq \tilde{\kappa}_n\leq 6 c$$ it follows that,
\begin{equation}\label{starbound}
0\leq C_{1,n}^{*}-\tilde{C}_{1,n}\leq 6 c.
\end{equation}
Finally, consider $\frac{\tilde{c}_n}{c_n^{*}}$,
\begin{align*}
\frac{\tilde{c}_n}{c_n^{*}}&=\frac{(1+\alpha c(n-1))}{e^{\alpha \tilde{C}_{1,n}}}\\
&=e^{\alpha\left(\frac{1}{\alpha}\log(1+\alpha c(n-1))-C^{*}_{1,n}\right)}e^{\alpha\left(C^{*}_{1,n}-\tilde{C}_{1,n}\right)}
\intertext{Thus by (\ref{starbound}),}
\frac{\tilde{c}_n}{c_n^{*}}&\leq e^{\alpha\left(\frac{1}{\alpha}\log(1+\alpha c(n-1))-C^{*}_{1,n-1}\right)}e^{6\alpha c}.
\intertext{Therefore using the bound in Lemma \ref{lemma2.2},}
\frac{\tilde{c}_n}{c_n^{*}}&\leq e^{\alpha\left(\epsilon_{1,n-1}\log(1+\alpha c(n-1))\right)}e^{6 \alpha c}\\
&\leq e^{ \alpha(\alpha+6) c}.
\end{align*}
Thus, $$\tilde{c}_n=c_n^{*}(1+\epsilon_n^{\infty})$$
where $\epsilon_n^{\infty}\to 0$ uniformly as $c\to 0$.
\end{proof}
Therefore, it follows that we can choose $c$ sufficiently small so that $\phi_n$ provides a good approximation to $\phi_n^{\infty}$. For notational simplicity all subsequent results are proved for $\phi_n$, however, it is straightforward to verify that $c$ can be chosen sufficiently small such that analogous results hold for $\phi_n^{\infty}$.
\section{Pointwise convergence for $0<\alpha<2$}
\subsection{Estimates}
In this section we will provide estimates for several variables which we will then call on throughout the rest of the paper. Whilst this work is an essential part of the analysis, we advise that the reader may skip the proofs of this section if they are only interested the main results of the paper. 
\\
\\
We start by providing some notation used throughout the remainder of the paper. Let $\phi_k$ and $c_i^*$ be defined as above. Recall, we denote $C^{*}_{k,n}=\sum_{i=k}^n c_i^*$. Then for any $z\in \mathbb{C}$ we define our increments $X_{k,n}(z)$ as;
\begin{equation}
X_{k,n}(z):=e^{-C_{1,n}^{*}}\left(\phi_k\left(e^{-C_{k+1,n}^{*}}z\right)-\phi_{k-1}\left(e^{-C_{k,n}^{*}}z\right)\right).
\end{equation}
Let $\mathcal{F}_{k-1}$ be the $\sigma$-algebra generated by the set $\{\theta_i \; : 1\leq i\leq k-1 \}$. We first show that for all $0<k \leq n$, $$\mathbb{E}(X_{k,n}(z) | \mathcal{F}_{k-1})=0.$$ This is shown in the following lemma.
\begin{lemma}\label{lemma4.2}
Define the sequence $\{X_{k,n}(z)\}_{k=0}^n$ and corresponding filtration $\mathcal{F}_k$ as above. For each $z\in \mathbb{C}$, the following property is satisfied for all $0<k \leq n$,
$$\mathbb{E}(X_{k,n}(z) | \mathcal{F}_{k-1})=0.$$
\end{lemma}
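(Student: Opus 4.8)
The plan is to integrate out the single fresh angle $\theta_k$. Fix $z$ in the common domain of the maps that appear, and condition on $\mathcal{F}_{k-1}$: this freezes the conformal map $\phi_{k-1}$ (the capacities $c_1^*,\dots,c_n^*$ are deterministic and $\phi_{k-1}$ depends only on $\theta_1,\dots,\theta_{k-1}$), while $\theta_k$ remains uniform on $[0,2\pi]$ and independent of $\mathcal{F}_{k-1}$. Since $\phi_k=\phi_{k-1}\circ f_k$ we may write $\phi_k(e^{-C^*_{k+1,n}}z)=\phi_{k-1}\bigl(f_k(e^{-C^*_{k+1,n}}z)\bigr)$, and the second term $\phi_{k-1}(e^{-C^*_{k,n}}z)$ of $X_{k,n}(z)$ is already $\mathcal{F}_{k-1}$-measurable. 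Hence everything reduces to the deterministic mean-value identity
$$\frac{1}{2\pi}\int_0^{2\pi}P\bigl(e^{i\theta}f_c(ue^{-i\theta})\bigr)\,d\theta=P\bigl(e^{c}u\bigr),$$
valid for every conformal $P:\Delta\to\mathbb{C}$ fixing $\infty$, every $c>0$ and every $u\in\Delta$: applied with $P=\phi_{k-1}$, $c=c_k^*$ and the relevant rescaled argument, it says that averaging $\phi_{k-1}\circ f_k$ over $\theta_k$ simply multiplies the argument of $\phi_{k-1}$ by $e^{c_k^*}$, which, by the relation $C^*_{k,n}=c_k^*+C^*_{k+1,n}$, is precisely the argument appearing in the second term; the two terms therefore have equal conditional expectation, giving $\mathbb{E}(X_{k,n}(z)\mid\mathcal{F}_{k-1})=0$.

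To establish the displayed identity I would use the identity theorem. Both sides are holomorphic in $u$ on $\Delta$ — the right-hand side because $e^cu\in\Delta$, and the left-hand side because $f_c$ maps $\Delta$ into $\Delta$ (so the argument of $P$ stays in $\Delta$) together with Morera's theorem and Fubini applied to the $\theta$-integral — so it suffices to check equality for $|u|$ large, where one may expand in Laurent/Fourier series. By (\ref{eqn}), $f_c(w)=e^{c}w\,h(w)$ with $h$ holomorphic and non-vanishing on $\Delta$ and $h(\infty)=1$, whence $h(w)=1+\sum_{j\ge1}h_jw^{-j}$ on $\Delta$ and $e^{i\theta}f_c(ue^{-i\theta})=e^{c}u\bigl(1+\sum_{j\ge1}h_ju^{-j}e^{ij\theta}\bigr)$. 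Substituting this, together with the expansion $P(v)=P'(\infty)v+\sum_{m\ge0}b_mv^{-m}$, and expanding the resulting negative powers as absolutely convergent series in $u^{-1}e^{i\theta}$, every contribution carrying a factor $e^{i\ell\theta}$ with $\ell\ge1$ integrates to zero over $[0,2\pi]$, leaving exactly $P(e^cu)$.

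The ideas are routine; the only points requiring care are the justifications — that the left-hand integral genuinely defines a holomorphic function of $u$ (equivalently, that the Laurent/Fourier expansion may be integrated term by term, which follows from its absolute and locally uniform convergence on an annulus near $\infty$), and the bookkeeping of the partial capacity sums so that, after conditioning, the arguments of $\phi_{k-1}$ in the two pieces of $X_{k,n}(z)$ really do coincide. No induction in $k$ is needed: the statement follows directly from the mean-value identity for each $k$, the case $k=1$ being the special case $P=\mathrm{id}$ with $\mathcal{F}_0$ trivial.
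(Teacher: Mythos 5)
Your proposal is correct and takes essentially the same approach as the paper: condition on $\mathcal{F}_{k-1}$, integrate out the single fresh angle $\theta_k$, and reduce the claim to the mean-value identity $\frac{1}{2\pi}\int_0^{2\pi}P\bigl(e^{i\theta}f_c(ue^{-i\theta})\bigr)\,d\theta=P(e^{c}u)$, which the paper proves by substituting $w=e^{i\theta}$ and applying Cauchy's integral formula to the removable singularity of $w\mapsto P(wf_c(u/w))$ at $w=0$, and which you establish by term-by-term integration of the Laurent/Fourier expansion followed by the identity theorem — two renditions of the same computation.
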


\begin{proof}
We first show;
\begin{align*}
\int_0^{2\pi} \phi_{k-1} (e^{i \theta} f_{c_k^*}(e^{-i \theta} z)) \frac{d \theta}{2 \pi}&=\phi_{k-1}(e^{c_k^*} z).
\intertext{Let $w=e^{i\theta}$, then the integral can be rewritten as}
\int_0^{2\pi} \phi_{k-1} (e^{i \theta} f_{c_k^*}(e^{-i \theta} z)) \frac{d \theta}{2 \pi}&=\frac{1}{2\pi i} \int_{C} \frac{\phi_{k-1}(w f_{c_k^*}(z/w))}{w} dw
\end{align*}
where $C$ is the unit circle centered at 0. The map $\phi_{k-1}(w f_{c_k^*}(z/w)):\Delta \to \Delta$  is analytic with a removable singularity at 0 and so by Cauchy's integral formula,
\begin{align*}
\frac{1}{2\pi i} \int_{C} \frac{\phi_{k-1}(w f_{c_k^*}(z/w))}{w} dw &=\lim_{w\to 0} \phi_{k-1}(wf_{c_k^*}(z/w))\\
&=\phi_{k-1}(\lim_{w\to 0} wf_{c_k^*}(z/w))\\
&=\phi_{k-1}\left(\lim_{w\to 0} \left(e^{c_k^*}z +a_0w+a_1 \frac{w^2}{z^2}+...\right)\right)
\end{align*}
for some complex number sequence of $a_i$'s. Thus,
$$\int_0^{2\pi} \phi_{k-1} (e^{i \theta} f_{c_k^*}(e^{-i \theta} z)) \frac{d \theta}{2 \pi}=\phi_{k-1}(e^{c_k^*} z)$$
as required. So now let us consider $\mathbb{E}(\phi_k(z)|\mathcal{F}_{k-1})$. This can be rewritten as 
 $$ \mathbb{E}(\phi_k(z)|\mathcal{F}_{k-1})=\mathbb{E}(\phi_{k-1} (e^{i \theta} f_{c_k^*}(e^{-i \theta} z))|\mathcal{F}_{k-1}).$$
The only randomness here comes from $\theta_k$, the $c_k^*$ are pre-determined, and so,
$$\mathbb{E}(\phi_k(z)|\mathcal{F}_{k-1})=\int_0^{2\pi} \phi_{k-1} (e^{i \theta} f_{c_k^*}(e^{-i \theta} z)) \frac{d \theta}{2 \pi}=\phi_{k-1}(e^{c_k^*} z).$$
Therefore,
$$\mathbb{E}(\phi_k(e^{C_{k+1,n}^{*}}z)|\mathcal{F}_{k-1})=\phi_{k-1}(e^{C_{k,n}^{*}}z).$$
Thus,
$$\mathbb{E}(X_{k,n}| \mathcal{F}_{k-1})=e^{-C_{1,n}^{*}}\left(\mathbb{E}(\phi_k(e^{C_{k+1,n}^{*}}z)|\mathcal{F}_{k-1})-\phi_{k-1}(e^{C_{k,n}^{*}}z)\right)=0$$
as required.

\end{proof}
Therefore, we also define the sum,
\begin{equation}\label{mart} M_n(z):=\sum_{k=1}^{n} X_{k,n}(z)=e^{-C_{1,n}^{*}}\phi_n(z)-z 
\end{equation}
which, by Lemma \ref{lemma4.2}, is a martingale difference array. We will also need to define the bounded variation
\begin{equation}\label{diff} 
T_n(z):=\sum_{k=1}^n\mathbb{E}(|X_{k,n}(z)|^2 \; |\; \mathcal{F}_{k-1}).
\end{equation}
Our aim is to show that we approach a disk pointwise, equivalently, for a fixed value $z$, $|M_n(z)|\to 0$ as $n\to \infty$. Throughout we use $\lambda$ to denote strictly positive, unless stated otherwise, constants which may change from line to line. Where these constants depend on  parameters from the model we indicate these explicitly. 
\\
\\
We will find pointwise bounds on $X_{k,n}(z)$ and $T_n(z)$. By definition; 
\begin{align*}
|X_{k,n}(z)|&=e^{-C_{1,n}^{*}}|\phi_k(e^{C_{k+1,n}^{*}} z)-\phi_{k-1}(e^{C_{k,n}^{*}} z)|\\
&=e^{-C_{1,n}^{*}}|\phi_{k-1}(e^{i \theta_k} f_{c_k^*}(e^{-i\theta_k}e^{C_{k+1,n}^{*}} z))-\phi_{k-1}(e^{C_{k,n}^{*}} z)|
\end{align*}
So we introduce the following parameterisation. 
\begin{definition} \label{etadef}
For each $n\in \mathbb{N}$, $z\in \mathbb{C}$, $k\leq n$ and $\delta_c(z)$ defined as in (\ref{eqn}), we define the following parameterisation for $0<s<1$,
$$\eta_{k,n}(s,z)=e^{C_{k,n}^{*}}z\exp\left(s\left(\frac{2c_k^{*}}{e^{-i\theta_k}e^{C_{k+1,n}^{*}}z-1}+\delta_c \left(e^{-i\theta_k}e^{C_{k+1,n}^{*}}z\right)\right)\right).$$
\end{definition}
We start by showing that for $|z|>r$, for some $r>1$, we can bound $\delta_c \left(e^{-i\theta_k}e^{C_{k+1,n}^{*}}z\right)$ by a constant via the following lemma.
\begin{lemma}\label{deltabound}
For $C_{k,n}^{*}$ and $\delta_c(z)$ defined as above, and for $|z|>r$ for some $r>1$, the following bound holds,
$$|\delta_{c_k^{*}} \left(e^{-i\theta_k}e^{C_{k+1,n}^{*}}z\right)|<\lambda(\alpha,c,r)k^{\frac{1}{\alpha}-\frac{3}{2}}n^{\frac{-1}{\alpha}}\leq \lambda(\alpha,c,r)k^{ -\frac{3}{2}}<\lambda(\alpha,c,r)$$
where $\lambda(\alpha,c,r)$ is a positive constant dependent on $\alpha$, $c$ and $r$.
\end{lemma}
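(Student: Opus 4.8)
The plan is to control $\delta_{c_k^*}$ directly from its defining bound in \eqref{eqn}, namely $|\delta_c(w)| < \tilde\lambda c^{3/2}|w|/(|w-1|(|w|-1))$, applied at $w = e^{-i\theta_k}e^{C_{k+1,n}^*}z$. The modulus $|w| = e^{C_{k+1,n}^*}|z| \geq e^{C_{k+1,n}^*} r$, so since $r>1$ we immediately get $|w| > r > 1$, hence $|w-1| \geq |w|-1 \geq r-1 > 0$ and also $|w-1| \geq |w| - 1$. Thus
$$\frac{|w|}{|w-1|(|w|-1)} \leq \frac{|w|}{(|w|-1)^2} = \frac{1}{|w|-1} + \frac{1}{(|w|-1)^2} \leq \frac{1}{r-1} + \frac{1}{(r-1)^2} =: \lambda(r),$$
using that $u \mapsto u/(u-1)^2$ is decreasing for $u>1$ and $|w| \geq r$. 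This already gives $|\delta_{c_k^*}(w)| < \tilde\lambda \lambda(r) (c_k^*)^{3/2}$, reducing everything to a bound on $(c_k^*)^{3/2}$.

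Next I would estimate $c_k^* = c/(1+\alpha c(k-1))$. Since $1 + \alpha c(k-1) \geq \alpha c k$ for $k \geq 1$ (indeed $1 + \alpha c(k-1) = 1 - \alpha c + \alpha c k \geq \alpha c k$ whenever $\alpha c \leq 1$, which holds for $c < c_0$), we have $c_k^* \leq c/(\alpha c k) = 1/(\alpha k)$, so $(c_k^*)^{3/2} \leq (\alpha k)^{-3/2} = \alpha^{-3/2} k^{-3/2}$. Combining, $|\delta_{c_k^*}(w)| < \tilde\lambda \lambda(r) \alpha^{-3/2} k^{-3/2}$, which gives the middle and right-hand bounds in the statement with $\lambda(\alpha,c,r) := \tilde\lambda\lambda(r)\alpha^{-3/2}$ (absorbing any $c$-dependence, of which there is in fact none in this crude form — but the statement allows dependence on $c$, so this is fine; for the sharper $k^{1/\alpha - 3/2} n^{-1/\alpha}$ bound see below).

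For the sharper first inequality $|\delta_{c_k^*}(w)| < \lambda(\alpha,c,r) k^{1/\alpha - 3/2} n^{-1/\alpha}$, I would instead not discard the factor $1/(|w|-1)$ as wastefully. Write $|w| - 1 \geq e^{C_{k+1,n}^*} r - 1$; by Lemma~\ref{lemma2.2}, $C_{k+1,n}^* = \tfrac{1}{\alpha}\log\!\big(\tfrac{1+\alpha c n}{1+\alpha c k}\big)(1+\epsilon_{k+1,n})$, and since $\epsilon_{k+1,n} \geq 0$ this gives $e^{C_{k+1,n}^*} \geq \big(\tfrac{1+\alpha c n}{1+\alpha c k}\big)^{1/\alpha} \geq \big(\tfrac{n}{k}\big)^{1/\alpha} \cdot (\text{const})$ after comparing $1+\alpha c n$ with $\alpha c n$ and $1 + \alpha c k$ with $(1+\alpha c)\alpha c k$. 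Hence $|w| \geq \lambda(\alpha,c) (n/k)^{1/\alpha}$, so for $n$ large the dominant behaviour of $1/(|w|-1)$ is $\lambda(\alpha,c)(k/n)^{1/\alpha}$; multiplying by $(c_k^*)^{3/2} \leq \lambda k^{-3/2}$ and bounding the remaining bounded factor $|w|/(|w|-1) \leq r/(r-1)$ yields the claimed $k^{1/\alpha - 3/2} n^{-1/\alpha}$ rate. The inequality $k^{1/\alpha - 3/2} n^{-1/\alpha} \leq k^{-3/2}$ follows since $k \leq n$ and $1/\alpha > 0$, and $k^{-3/2} \leq 1$ since $k \geq 1$, giving the full chain.

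The main obstacle is purely bookkeeping: keeping track of which multiplicative constants genuinely depend on $\alpha$, $c$, $r$ versus which are absolute, and being careful that the comparison $e^{C_{k+1,n}^*} \geq \lambda(\alpha,c)(n/k)^{1/\alpha}$ does not secretly require $k < n$ strictly (when $k = n$ the sum $C_{k+1,n}^*$ is empty, $e^0 = 1$, and the bound $(n/k)^{1/\alpha} = 1$ still holds with an appropriate constant, so there is no edge-case problem). No serious analytic difficulty arises; the lemma is a direct estimate once \eqref{eqn} and Lemma~\ref{lemma2.2} are in hand.
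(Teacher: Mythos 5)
Your argument is correct and follows essentially the same route as the paper: invoke the defining bound $|\delta_c(w)| < \tilde\lambda c^{3/2}|w|/(|w-1|(|w|-1))$ at $w = e^{-i\theta_k}e^{C_{k+1,n}^*}z$, bound $(c_k^*)^{3/2}$ by $\lambda(\alpha,c)k^{-3/2}$, and use Lemma~\ref{lemma2.2}/Corollary~\ref{3.3} to compare $e^{-C_{k+1,n}^*}$ with $(k/n)^{1/\alpha}$. Two tiny polish points: the restriction $\alpha c \leq 1$ is not needed, since $c_k^* = c/(1+\alpha c(k-1)) \leq \lambda(\alpha,c)/k$ holds for all $c>0$ with a suitable constant; and the phrase ``for $n$ large the dominant behaviour'' should be replaced by the uniform inequality $e^{C_{k+1,n}^*}r - 1 \geq e^{C_{k+1,n}^*}(r-1)$ together with $(1+\alpha ck)/(1+\alpha cn) \leq \tfrac{1+\alpha c}{\alpha c}\cdot k/n$, which gives the stated bound for every $1\leq k\leq n$ without asymptotics.
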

\begin{proof}
From equation (\ref{eqn}) we know \begin{align*}
|\delta_{c_k^{*}}(z)|\leq &\frac{\tilde{\lambda }(c_k^*)^{\frac{3}{2}}|z|}{|z-1|(|z|-1)}  \intertext{where $\tilde{\lambda}$ is some constant. Therefore,}
\left|\delta_{c_k^{*}} \left(e^{-i\theta_k}e^{C_{k+1,n}^{*}}z\right)\right|&\leq \frac{\tilde{\lambda }(c_k^*)^{\frac{3}{2}}|e^{C_{k+1,n}^{*}}z|}{|e^{-i\theta_k}e^{C_{k+1,n}^{*}}z-1|(|e^{C_{k+1,n}^{*}}z|-1)}.
\intertext{Since $|z|>r$,}
\left|\delta_{c_k^{*}} \left(e^{-i\theta_k}e^{C_{k+1,n}^{*}}z\right)\right|&\leq \frac{\tilde{\lambda }(c_k^*)^{\frac{3}{2}}e^{C_{k+1,n}^{*}}r}{(e^{C_{k+1,n}^{*}}r-1)^2}.
\intertext{Note that $\tilde{\lambda}$ could equal zero here. So using the estimates on $ e^{C_{k+1,n}^{*}}$ and $\epsilon_{k,n}$ from Lemmas \ref{lemma2.2} and Corollary \ref{3.3} respectively we have the following bound,}
\left|\delta_{c_k^{*}} \left(e^{-i\theta_k}e^{C_{k+1,n}^{*}}z\right)\right|&\leq \lambda(\alpha,c,r)k^{\frac{1}{\alpha}-\frac{3}{2}}n^{\frac{-1}{\alpha}}\\
&\leq \lambda(\alpha,c,r)k^{ -\frac{3}{2}}<\lambda(\alpha,c,r)
\end{align*}
where $\lambda(\alpha,c,r)$ is a constant dependent on $\alpha$, $c$ and $r$.
\end{proof}
Note that we will need the intermediate bound in a later proof. Now using Definition \ref{etadef} we see,
$$\;\;\;\eta(0)=e^{C_{k,n}^{*}}z, \;\;\;\;\;\;\;\;\;\;\;\;\;\;\;\eta(1)=e^{i \theta_k} f_{c_k^*}(e^{-i\theta_k}e^{C_{k+1,n}^{*}} z)$$
where $f_{c_k^*}(z)$ is defined as in Section 1. Therefore, 
$$|X_{k,n}(z)|=e^{-C_{1,n}^{*}}|\phi_{k-1}(\eta(1))-\phi_{k-1}(\eta(0))|.$$
Before finding pointwise bounds on $X_{k,n}(z)$ and $T_n(z)$, we first find pointwise bounds on elements of $\eta_{k,n}(s,z)$ and its derivative.
\begin{lemma}\label{3.6}
For $\eta_{k,n}(s,z)$ defined in (\ref{etadef}), for each $z\in \mathbb{C}$ with $|z|>r$ and each $0\leq s\leq 1$, the following pointwise bound holds,
\begin{align*}
\left|\exp\left(s\left(\frac{2c_k^{*}}{e^{-i\theta_k}e^{C_{k+1,n}^{*}}z-1}+\delta_{c_k^{*}} \left(e^{-i\theta_k}e^{C_{k+1,n}^{*}}z\right)\right)\right)\right|\leq \lambda(\alpha,c,r)
\end{align*}
where $\lambda(\alpha,c,r)$ is a constant dependent on $\alpha$, $c$ and $r$. Furthermore, 
\begin{align*}
|\dot{\eta}_{k,n}(s,z)|\leq \lambda(\alpha,c,r)\Big| \frac{c_k^* e^{C_{k,n}^{*}} z}{ e^{-i\theta_k}e^{C_{k+1,n}^{*}}z -1} \Big|\leq  \lambda(\alpha,c,r)\frac{c_k^* e^{C_{k,n}^{*}} }{ e^{C_{k+1,n}^{*}}r -1}.
\end{align*}
\end{lemma}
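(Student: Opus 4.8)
The plan is to establish the two displayed bounds separately, in both cases reducing everything to elementary estimates on the modulus of an exponential together with the $\delta_{c_k^*}$ estimates already proved in Lemma~\ref{deltabound}. For the first bound I would write the argument of the exponential as $sw$, where $w = \tfrac{2c_k^*}{e^{-i\theta_k}e^{C_{k+1,n}^*}z-1} + \delta_{c_k^*}\bigl(e^{-i\theta_k}e^{C_{k+1,n}^*}z\bigr)$ does not depend on $s$, and use that $|e^{sw}| = e^{s\operatorname{Re} w} \le e^{|w|}$ since $0 \le s \le 1$. It then suffices to bound $|w|$ by a constant depending only on $\alpha,c,r$: I would use $c_k^* \le c$ and the lower bound $\bigl|e^{-i\theta_k}e^{C_{k+1,n}^*}z-1\bigr| \ge e^{C_{k+1,n}^*}|z| - 1 \ge e^{C_{k+1,n}^*}r - 1 \ge r-1 > 0$ (valid because every $c_i^*$ is positive and $|z|>r>1$) to control the first summand of $w$ by $2c/(r-1)$, while the second summand is bounded by $\lambda(\alpha,c,r)$ directly from Lemma~\ref{deltabound}; exponentiating gives the claim.

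For the derivative bound I would note that the expression inside the exponential in Definition~\ref{etadef} is exactly $sw$, so $\dot{\eta}_{k,n}(s,z) = e^{C_{k,n}^*}\,z\,w\,e^{sw}$. The factor $e^{sw}$ is controlled by $\lambda(\alpha,c,r)$ by the first part, so the remaining task is to show $|w| \le \lambda(\alpha,c,r)\,c_k^*\big/\bigl|e^{-i\theta_k}e^{C_{k+1,n}^*}z-1\bigr|$. The first summand of $w$ already has this form, and for the $\delta$-term I would revisit the bookkeeping in Lemma~\ref{deltabound}: writing $\zeta = e^{-i\theta_k}e^{C_{k+1,n}^*}z$, equation~(\ref{eqn}) gives $|\delta_{c_k^*}(\zeta)| < \tilde{\lambda}(c_k^*)^{3/2}|\zeta|\big/\bigl(|\zeta-1|(|\zeta|-1)\bigr)$; then $(c_k^*)^{3/2}\le\sqrt{c}\,c_k^*$ and $|\zeta|/(|\zeta|-1)\le r/(r-1)$ — the latter since $|\zeta| = e^{C_{k+1,n}^*}|z|\ge r$ and $t\mapsto t/(t-1)$ is decreasing — turn this into a bound of the required form. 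Combining the two summands of $w$ yields the first inequality.

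For the final inequality it only remains to bound $|z|\big/\bigl|e^{-i\theta_k}e^{C_{k+1,n}^*}z-1\bigr|$ by $r\big/\bigl(e^{C_{k+1,n}^*}r-1\bigr)$, which I would do using $\bigl|e^{-i\theta_k}e^{C_{k+1,n}^*}z-1\bigr|\ge e^{C_{k+1,n}^*}|z|-1>0$ and the fact that $x\mapsto x\big/\bigl(e^{C_{k+1,n}^*}x-1\bigr)$ is decreasing on $\{e^{C_{k+1,n}^*}x>1\}$, so its value at $|z|\ge r$ is at most its value at $r$; the leftover factor $r$ is absorbed into $\lambda(\alpha,c,r)$. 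I do not anticipate a real obstacle: the one point needing care is checking that every constant truly depends only on $\alpha,c,r$ and not on $k,n,z,\theta_k$, which is why the refinement of the $\delta$-contribution to something comparable to $c_k^*/|\zeta-1|$ — rather than simply invoking the cruder bound $\lambda(\alpha,c,r)$ from Lemma~\ref{deltabound} — is the step I would write out most carefully.
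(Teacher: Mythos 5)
Your proposal is correct and takes essentially the same approach as the paper: bound the exponential factor by a constant via the reverse triangle inequality and Lemma~\ref{deltabound}, then refine the $\delta$-estimate (using $(c_k^*)^{3/2}\leq\sqrt{c}\,c_k^*$ and $|\zeta|/(|\zeta|-1)\leq r/(r-1)$) so that it matches the $c_k^*/|\zeta-1|$ shape of the principal term. The one place where you are more explicit than the paper — spelling out why the cruder constant bound on $\delta$ from Lemma~\ref{deltabound} does not suffice for the derivative estimate and that one must re-examine the proof of that lemma — is exactly the reading the paper intends with its terse ``using the bound from the proof of Lemma~\ref{deltabound}.''
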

\begin{proof}
Let $\lambda(\alpha,c,r)$ be some constant that we allow to vary throughout the proof. First notice that since $c_k^{*}<c$ and $e^{C_{k+1,n}^{*}}|z|>r$ it follows that
$$\left|s\left(\frac{2c_k^{*}}{e^{-i\theta_k}e^{C_{k+1,n}^{*}}z-1}\right)\right|\leq \frac{2c}{r-1}.$$
Therefore as, 
\begin{align*}
&\left|\exp\left(s\left(\frac{2c_k^{*}}{e^{-i\theta_k}e^{C_{k+1,n}^{*}}z-1}+\delta_{c_k^{*}} \left(e^{-i\theta_k}e^{C_{k+1,n}^{*}}z\right)\right)\right)\right|\\&\leq \exp\left(\left|\frac{2c_k^{*}}{e^{-i\theta_k}e^{C_{k+1,n}^{*}}z-1}\right|+\left|\delta_{c_{k}^{*}} \left(e^{-i\theta_k}e^{C_{k+1,n}^{*}}z\right)\right|\right)
\end{align*}
we use the bound above along with Lemma \ref{deltabound} to reach the following bound
\begin{align*}
\left|\exp\left(s\left(\frac{2c_k^{*}}{e^{-i\theta_k}e^{C_{k+1,n}^{*}}z-1}+\delta_{c_k^{*}} \left(e^{-i\theta_k}e^{C_{k+1,n}^{*}}z\right)\right)\right)\right|&\leq \exp\left(\frac{2c}{r-1}+\lambda(\alpha,c,r)\right)\\
&=\lambda(\alpha,c,r).
\end{align*}
Now consider $\dot{\eta}_{k,n}(s,z)$. Recalling that
$$\eta_{k,n}(s,z)=e^{C_{k,n}^{*}}z\exp\left(s\left(\frac{2c_k^{*}}{e^{-i\theta_k}e^{C_{k+1,n}^{*}}z-1}+\delta_{c_k^{*}} \left(e^{-i\theta_k}e^{C_{k+1,n}^{*}}z\right)\right)\right)$$
we see that
\begin{align*}
|\dot{\eta}_{k,n}(s,z)|&\leq \left|\left(\frac{2c_k^{*}}{e^{-i\theta_k}e^{C_{k+1,n}^{*}}z-1}+\delta_{c_k^{*}} \left(e^{-i\theta_k}e^{C_{k+1,n}^{*}}z\right)\right)\right||\eta_{k,n}(s,z)|.
\intertext{Then using the bound we found above,}
|\dot{\eta}_{k,n}(s,z)|&\leq \lambda(\alpha,c,r)|e^{C_{k,n}^{*}}z|\left(\left|\frac{2c_k^{*}}{e^{-i\theta_k}e^{C_{k+1,n}^{*}}z-1}\right|+\left|\delta_{c_k^{*}} \left(e^{-i\theta_k}e^{C_{k+1,n}^{*}}z\right)\right|\right)
\intertext{where $\lambda(\alpha,c,r)$ is some constant. Now using the fact that $|z|>r$ and the bound from the proof of Lemma \ref{deltabound} we see that}
|\dot{\eta}_{k,n}(s,z)|&\leq\lambda(\alpha,c,r)\Big| \frac{2c_k^* e^{C_{k,n}^{*}} z}{ e^{-i\theta_k}e^{C_{k+1,n}^{*}}z -1} \Big|\leq  \lambda(\alpha,c,r)\frac{2c_k^* e^{C_{k,n}^{*}} }{ e^{C_{k+1,n}^{*}}r -1}.
\end{align*}
where the second inequality follows by using that $|z|>r$ again.
\end{proof}
Now we can use the bounds above to give us a pointwise bound on $X_{k,n}(z)$. We will use the following distortion theorem in the proof \cite{pommerenke1975univalent}.
\begin{theorem}\label{Koebe}
For a function from the exterior disc into the complex plane $F: \Delta \to \mathbb{C}$ that is univalent except for a simple pole at $\infty$ and Laurent expansion of the form 
$$F(z)=z+a_0+\sum^{\infty}_{n=1}a_n z^{-n}$$
we have the estimate 
$$\frac{|z|^2-1}{|z|^2}\leq |F'(z)|\leq \frac{|z|^2}{|z|^2 -1}\leq \frac{|z|}{|z| -1} \; \; \; \; \; z \in \Delta. $$
\end{theorem}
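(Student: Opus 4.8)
The plan is to recognise $F$ as a member of the classical class $\Sigma$ of functions univalent on $\Delta$ with a simple pole of residue $1$ at infinity --- that is, exactly the functions of the form in the statement --- and to obtain the two-sided bound from the Area Theorem together with the Grunsky inequalities. Two structural observations set everything up. First, since $a_0$ is a constant, $F'(z)=1-\sum_{n\geq 1}na_n z^{-n-1}$ regardless of $a_0$. Second, $F$ univalent forces $F'$ to be zero-free on $\Delta$ (and $F'(\infty)=1$), so $\log F'$ admits a single-valued holomorphic branch on $\Delta$ normalised by $\log F'(\infty)=0$. These are the only facts about $F$ I will use.

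For the upper bound I would invoke the Area Theorem for $\Sigma$, which gives $\sum_{n\geq 1}n|a_n|^2\leq 1$. Combining this with Cauchy--Schwarz and the identity $\sum_{n\geq 1}n|z|^{-2n-2}=(|z|^2-1)^{-2}$ yields
$$|F'(z)-1|\leq \sum_{n\geq 1}n|a_n|\,|z|^{-n-1}\leq\Big(\sum_{n\geq 1}n|a_n|^2\Big)^{1/2}\Big(\sum_{n\geq 1}n|z|^{-2n-2}\Big)^{1/2}\leq\frac{1}{|z|^2-1},$$
so $|F'(z)|\leq 1+(|z|^2-1)^{-1}=\frac{|z|^2}{|z|^2-1}$. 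The final inequality of the theorem, $\frac{|z|^2}{|z|^2-1}\leq\frac{|z|}{|z|-1}$, is elementary: after clearing denominators it reduces to $|z|\leq|z|+1$.

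The step I expect to be the main obstacle is the lower bound, because the estimate $|F'(z)|\geq 1-(|z|^2-1)^{-1}$ that drops out of the computation above is strictly weaker than the claimed $\frac{|z|^2-1}{|z|^2}$ (and is vacuous when $|z|\leq\sqrt{2}$); the sharp bound genuinely needs the second-order information carried by the Grunsky inequalities. Here I would expand $\log\frac{F(z)-F(\zeta)}{z-\zeta}=-\sum_{m,n\geq 1}b_{mn}z^{-m}\zeta^{-n}$ in Grunsky coefficients, let $\zeta\to z$ to obtain $\log F'(z)=-\sum_{m,n\geq 1}b_{mn}z^{-m-n}$, and then feed the sequence $\lambda_m=z^{-m}$ (admissible on passing to the limit of finite truncations) into the Grunsky inequality $\big|\sum_{m,n}b_{mn}\lambda_m\lambda_n\big|\leq\sum_{n}|\lambda_n|^2/n$, which gives
$$|\log F'(z)|=\Big|\sum_{m,n\geq 1}b_{mn}z^{-m-n}\Big|\leq\sum_{n\geq 1}\frac{|z|^{-2n}}{n}=-\log\!\big(1-|z|^{-2}\big)=\log\frac{|z|^2}{|z|^2-1}.$$
Exponentiating recovers both $|F'(z)|\leq\frac{|z|^2}{|z|^2-1}$ and $|F'(z)|\geq\frac{|z|^2-1}{|z|^2}$ at once, the lower bound being sharp (equality for $F(z)=z+1/z$ at real $z$). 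Since the statement is entirely classical, an equally legitimate route is to quote Pommerenke \cite{pommerenke1975univalent} directly; the sketch above is just the argument underlying that reference.
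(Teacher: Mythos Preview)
Your argument is correct, and in fact supplies more than the paper does: the paper states this distortion theorem as a classical result quoted from Pommerenke \cite{pommerenke1975univalent} and gives no proof whatsoever. Your Grunsky-inequality route is exactly the standard derivation behind that reference, so there is nothing to compare --- you have filled in what the paper simply cites.
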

Our bound on  $X_{k,n}(z)$ is given by the following lemma.
\begin{lemma}\label{4.6}
For the sequence $\{X_{k,n}(z)\}_{k=0}^n$ and corresponding filtration $\mathcal{F}_k$ defined as above, and for a fixed $|z|>r$, the following property is satified for all $0<k \leq n$;
$$|X_{k,n}(z)|<\lambda(\alpha,c,r)\frac{c_k^{*}}{e^{C_{k+1,n}^{*}}r -1}$$where $\lambda(\alpha,c,r)$ is a constant dependent on $\alpha$, $c$ and $r$.
Furthermore, for $0<\alpha\leq1$,
$$\sup_{k\leq n}|X_{k,n}(z)|<\lambda(\alpha,c,r)\frac{1}{n}$$
and for $\alpha>1,$
$$\sup_{k\leq n}|X_{k,n}(z)|<\lambda(\alpha,c,r)\frac{1}{n^{\frac{1}{\alpha}}}.$$

\end{lemma}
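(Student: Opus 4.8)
The plan is to bound $|X_{k,n}(z)|$ by writing the increment $\phi_{k-1}(\eta_{k,n}(1,z))-\phi_{k-1}(\eta_{k,n}(0,z))$ as an integral of $\phi_{k-1}'$ along the path $s\mapsto\eta_{k,n}(s,z)$, controlling $|\phi_{k-1}'|$ by the distortion estimate of Theorem~\ref{Koebe} and $|\dot{\eta}_{k,n}|$ by Lemma~\ref{3.6}, and then substituting the capacity estimates of Lemma~\ref{lemma2.2} to read off the power of $n$.

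Starting from $|X_{k,n}(z)|=e^{-C_{1,n}^{*}}|\phi_{k-1}(\eta_{k,n}(1,z))-\phi_{k-1}(\eta_{k,n}(0,z))|$, I would write this as $e^{-C_{1,n}^{*}}\bigl|\int_0^1\phi_{k-1}'(\eta_{k,n}(s,z))\,\dot{\eta}_{k,n}(s,z)\,ds\bigr|$. To legitimise this I first check the path stays in $\Delta$: since $c_k^{*}\le c$, $|e^{-i\theta_k}e^{C_{k+1,n}^{*}}z-1|\ge e^{C_{k+1,n}^{*}}r-1\ge r-1$, and $|\delta_{c_k^{*}}(\cdot)|$ is small by Lemma~\ref{deltabound}, the real part of the exponent defining $\eta_{k,n}$ is bounded below by a constant arbitrarily close to $0$ once $c$ is small relative to $r$; hence $|\eta_{k,n}(s,z)|\ge e^{C_{k,n}^{*}}r\exp(-\tfrac{2c}{r-1}-\lambda(\alpha,c,r))\ge\tilde r$ for some $\tilde r>1$. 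Since the maps $f_{c_i^{*}}$ have capacity $c_i^{*}$, $\phi_{k-1}'(\infty)=e^{C_{1,k-1}^{*}}$, so $e^{-C_{1,k-1}^{*}}\phi_{k-1}$ is univalent on $\Delta$ with Laurent expansion $z+a_0+a_1z^{-1}+\cdots$ at $\infty$, and Theorem~\ref{Koebe} gives $|\phi_{k-1}'(w)|\le e^{C_{1,k-1}^{*}}\tfrac{|w|}{|w|-1}$, so that $|\phi_{k-1}'(\eta_{k,n}(s,z))|\le\lambda(\alpha,c,r)e^{C_{1,k-1}^{*}}$ uniformly in $s$. Combining this with $|\dot{\eta}_{k,n}(s,z)|\le\lambda(\alpha,c,r)\tfrac{c_k^{*}e^{C_{k,n}^{*}}}{e^{C_{k+1,n}^{*}}r-1}$ from Lemma~\ref{3.6} and the cancellation $e^{-C_{1,n}^{*}}e^{C_{1,k-1}^{*}}e^{C_{k,n}^{*}}=1$ (which holds because $C_{1,n}^{*}=C_{1,k-1}^{*}+C_{k,n}^{*}$), the integrand is bounded by $\lambda(\alpha,c,r)\tfrac{c_k^{*}}{e^{C_{k+1,n}^{*}}r-1}$ independently of $s$, giving the first assertion.

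For the supremum bounds I would use $e^{C_{k+1,n}^{*}}r-1\ge(r-1)e^{C_{k+1,n}^{*}}$ and, from Lemma~\ref{lemma2.2}, $e^{C_{k+1,n}^{*}}=\bigl(\tfrac{1+\alpha cn}{1+\alpha ck}\bigr)^{(1+\epsilon_{k+1,n})/\alpha}\ge\bigl(\tfrac{1+\alpha cn}{1+\alpha ck}\bigr)^{1/\alpha}$ (valid since $\epsilon_{k+1,n}\ge0$ and $\tfrac{1+\alpha cn}{1+\alpha ck}\ge1$), together with $c_k^{*}=\tfrac{c}{1+\alpha c(k-1)}$ and the harmless ratio $\tfrac{1+\alpha ck}{1+\alpha c(k-1)}\le1+\alpha c$, to get $\tfrac{c_k^{*}}{e^{C_{k+1,n}^{*}}r-1}\le\lambda(\alpha,c,r)\tfrac{(1+\alpha ck)^{1/\alpha-1}}{(1+\alpha cn)^{1/\alpha}}$. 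When $0<\alpha\le1$ the numerator $(1+\alpha ck)^{1/\alpha-1}$ is nondecreasing in $k$, so its maximum over $k\le n$ is attained at $k=n$ and the right-hand side is $\le\lambda(\alpha,c,r)(1+\alpha cn)^{-1}\le\lambda(\alpha,c,r)n^{-1}$; when $\alpha>1$ it is decreasing, its maximum is at $k=1$, and the right-hand side is $\le\lambda(\alpha,c,r)(1+\alpha cn)^{-1/\alpha}\le\lambda(\alpha,c,r)n^{-1/\alpha}$, which is the stated dichotomy.

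I expect the main obstacle to be not a single hard estimate but two points needing care. First, one must verify the interpolating path $\eta_{k,n}(\cdot,z)$ never leaves $\Delta$, so that $\phi_{k-1}'$ is defined along it and Theorem~\ref{Koebe} can be applied with a constant uniform in $s$; this is exactly where the smallness of $c$ relative to $r$ is used. Second, one must track the various $\lambda(\alpha,c,r)$ constants and the $\epsilon_{k,n}$ error terms while performing the case split on the sign of $\tfrac1\alpha-1$, which is precisely what produces the two different rates $1/n$ and $1/n^{1/\alpha}$.
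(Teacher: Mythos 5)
Your proof is correct and follows essentially the same route as the paper: write $X_{k,n}(z)$ as the integral of $\phi_{k-1}'\cdot\dot\eta_{k,n}$ along the path $s\mapsto\eta_{k,n}(s,z)$, bound $|\phi_{k-1}'|$ via Theorem~\ref{Koebe} after the renormalisation $e^{-C^*_{1,k-1}}\phi_{k-1}$, bound $|\dot\eta_{k,n}|$ via Lemma~\ref{3.6}, use the cancellation $C^*_{1,k-1}+C^*_{k,n}=C^*_{1,n}$, and then plug in Lemma~\ref{lemma2.2} and Corollary~\ref{3.3} to extract $k^{1/\alpha-1}n^{-1/\alpha}$ and split on the sign of $1/\alpha-1$. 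The only divergence is cosmetic: the paper simply asserts $\inf_{0\le s\le 1}|\eta_{k,n}(s,z)|\ge|z|$ before applying the distortion theorem, whereas you instead derive the (weaker but sufficient) lower bound $|\eta_{k,n}(s,z)|\ge\tilde r>1$ from the bound on the real part of the exponent, paying for it with a smallness requirement on $c$ relative to $r$; both give the same uniform control on $\phi_{k-1}'$ along the path.
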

\begin{proof}
By definition
\begin{align*}
|X_{k,n}(z)|&=e^{-C_{1,n}^{*}}|\phi_{k-1}(\eta(1))-\phi_{k-1}(\eta(0))|.\intertext{Hence,}
|X_{k,n}(z)|&=e^{-C_{1,n}^{*}}\left|\int_0^1 \phi_{k-1}'(\eta_{k,n}(s,z))\; \dot{\eta}_{k,n}(s) ds\right|\\
&\leq e^{-C_{1,n}^{*}}\int_0^1 \left|\phi_{k-1}'(\eta_{k,n}(s,z))\right|| \dot{\eta}_{k,n}(s) |\;  ds 
\end{align*}
Using Lemma \ref{3.6} we have,
$$|\dot{\eta}_{k,n}(s)|\leq \lambda(\alpha,c,r)\frac{2c_k^* e^{C_{k,n}^{*}} r}{ e^{C_{k+1,n}^{*}}r -1}.$$
where $\lambda(\alpha,c,r)$ is a non-zero constant that will vary throughout this proof. Moreover, we can find a bound on $\int_0^1 \left|\phi_{k-1}'(\eta_{k,n}(s,z))\right|\;  ds $ using Theorem \ref{Koebe},
$$\int_0^1 \left|\phi_{k-1}'(\eta_{k,n}(s,z))\right|\;  ds < e^{C_{1,k-1}^{*}}\sup_{0<s<1} \frac{| \eta_{k,n}(s,z)|}{| \eta_{k,n}(s,z)| -1}.$$
Note that in order to apply the distortion theorem to our function $\phi_{k-1}$ we had to rescale by a factor of $e^{C_{1,k-1}^{*}}$. It is easy to show that $\inf_{0\leq s\leq 1}|\eta_{k,n}(s,z)|\geq|z|$ and therefore for $|z|>r$,
$$\int_0^1 \left|\phi_{k-1}'(\eta_{k,n}(s,z))\right|\;  ds < e^{C_{1,k-1}^{*}} \frac{r}{ r -1}.$$
Thus, by compiling the bounds above,
\begin{align*}
|X_{k,n}(z)|&<\lambda(\alpha,c,r)e^{-C_{1,n}^{*}}\frac{e^{C_{1,k-1}^{*}} r }{ r -1}\frac{2c_k^* e^{C_{k,n}^{*}} r}{ e^{C_{k+1,n}^{*}}r -1}\\
&<\lambda(\alpha,c,r)\frac{c_k^{*}}{e^{C_{k+1,n}^{*}}r -1}.\intertext{Using the estimates in Lemma \ref{lemma2.2} and Corollary \ref{3.3} we have,}
|X_{k,n}(z)|&<\lambda(\alpha,c,r)k^{\frac{1}{\alpha}-1} n^{-\frac{1}{\alpha}}.
\end{align*}
First consider the case where $0<\alpha\leq 1$. Then $\frac{1-\alpha}{\alpha}\geq 0.$ Hence, it is clear that the maximum occurs when $k=n$ and thus
$$\sup_{k\leq n}|X_{k,n}(z)|<\lambda(\alpha,c,r)\frac{1}{n}$$
However, when $\alpha>1$, $k^{\frac{1-\alpha}{\alpha}}<1$ , so
$$\sup_{k\leq n}|X_{k,n}(z)|<\lambda(\alpha,c,r)\frac{1}{n^{\frac{1}{\alpha}}}$$
where $\lambda(\alpha,c,r)$ is a constant dependent on $\alpha$, $c$ and $r$.
\end{proof}
It is now clear to see that as $n$ approaches infinity the bound on $X_{k,n}(z)$ approaches zero pointwise. 

\begin{corollary}
For $X_{k,n}(z)$ defined as above;
$$\lim_{n\to \infty} \sup_{k\leq n} |X_{k,n}(z)|=0$$
\end{corollary}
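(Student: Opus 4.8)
The plan is to read this off directly from Lemma \ref{4.6}. Fix $z$ with $|z|>1$ and choose any $r$ with $1<r<|z|$, so that the hypotheses of Lemma \ref{4.6} are met for this $z$ and this $r$. The constant $\lambda(\alpha,c,r)$ then depends only on the fixed data $\alpha$, $c$, $r$ (hence ultimately only on $\alpha$, $c$, $z$) and in particular does not depend on $n$.

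Next I would split into the two regimes exactly as in Lemma \ref{4.6}. For $0<\alpha\leq 1$ we have $\sup_{k\leq n}|X_{k,n}(z)|<\lambda(\alpha,c,r)/n$, and for $1<\alpha<2$ we have $\sup_{k\leq n}|X_{k,n}(z)|<\lambda(\alpha,c,r)/n^{1/\alpha}$. Since $0<\alpha<2$, in the second case the exponent satisfies $1/\alpha>1/2>0$, so in both cases the right-hand side is a fixed positive constant times a negative power of $n$, and therefore tends to $0$ as $n\to\infty$. Letting $n\to\infty$ gives $\limsup_{n\to\infty}\sup_{k\leq n}|X_{k,n}(z)|\leq 0$, and since the quantity is nonnegative the limit exists and equals $0$.

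There is no real obstacle here: the work has already been done in establishing the uniform-in-$k$ bounds of Lemma \ref{4.6}. The only point worth a line of care is to note that fixing $z$ permits choosing a fixed $r\in(1,|z|)$, so that Lemma \ref{4.6} applies with an $n$-independent constant; everything else is an immediate passage to the limit.
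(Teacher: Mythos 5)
Your proposal is correct and is exactly the intended argument: the paper states the corollary immediately after Lemma \ref{4.6} with only the remark that the bound there manifestly tends to zero, which is what you spell out (including the small but sensible point that one fixes $r\in(1,|z|)$ so the constant is $n$-independent). No further comment needed.
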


Now we want to calculate a bound on the variation $T_n(z)= \sum_{k=1}^n\mathbb{E}(|X_{k,n}(z)|^2 | \mathcal{F}_{k-1}).$ This is given by the following lemma. 
\begin{lemma}\label{boundt}
The following inequality holds for sufficiently large $n$. If $0<\alpha<2$,
$$T_n(z)\leq \lambda(\alpha,c,r)\frac{1}{n}$$
where $\lambda(\alpha,c,r)>0$ is some constant.
\end{lemma}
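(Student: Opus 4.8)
The plan is to bound $T_n(z)$ termwise using the deterministic pointwise estimate on $|X_{k,n}(z)|$ already established in Lemma \ref{4.6}, and then to reduce the whole sum to a single power sum in $k$ whose growth rate is controlled precisely by the hypothesis $\alpha<2$.

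First I would recall from Lemma \ref{4.6} that, for fixed $|z|>r$,
$$|X_{k,n}(z)|<\lambda(\alpha,c,r)\,k^{\frac{1}{\alpha}-1}n^{-\frac{1}{\alpha}},$$
and that this bound is deterministic, i.e.\ it holds for every realisation of $\{\theta_i\}$. Hence $|X_{k,n}(z)|^2$ is dominated by the non-random quantity $\lambda(\alpha,c,r)^2\,k^{\frac{2}{\alpha}-2}n^{-\frac{2}{\alpha}}$, so taking conditional expectations and summing over $k$ gives
$$T_n(z)=\sum_{k=1}^n\mathbb{E}\!\left(|X_{k,n}(z)|^2\mid\mathcal{F}_{k-1}\right)\leq \lambda(\alpha,c,r)^2\,n^{-\frac{2}{\alpha}}\sum_{k=1}^n k^{\frac{2}{\alpha}-2}.$$

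It then remains to estimate $\sum_{k=1}^n k^{\frac{2}{\alpha}-2}$. Since $0<\alpha<2$ we have $\frac{2}{\alpha}-2>-1$, so the exponent lies strictly above the critical value $-1$: comparing the sum with $\int_1^n x^{\frac{2}{\alpha}-2}\,dx$ when $1<\alpha<2$, and simply bounding by $n$ times the largest term when $0<\alpha\leq1$, yields $\sum_{k=1}^n k^{\frac{2}{\alpha}-2}\leq \lambda(\alpha)\,n^{\frac{2}{\alpha}-1}$ for $n$ sufficiently large, where $\lambda(\alpha)$ absorbs the factor $(\frac{2}{\alpha}-1)^{-1}$ produced by the integral. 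Substituting this into the previous display and combining the powers of $n$,
$$T_n(z)\leq \lambda(\alpha,c,r)^2\lambda(\alpha)\,n^{-\frac{2}{\alpha}}\,n^{\frac{2}{\alpha}-1}=\lambda(\alpha,c,r)\,\frac{1}{n},$$
which is the claim.

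The only point requiring care, and the reason the statement is restricted to $\alpha<2$, is the dependence of the constant on $\alpha$: the factor $(\frac{2}{\alpha}-1)^{-1}$ blows up as $\alpha\uparrow2$, and at $\alpha=2$ the power sum degenerates to $\sum_{k=1}^n k^{-1}\sim\log n$, so the argument would only deliver a bound of order $n^{-1}\log n$ — consistent with the phase transition at $\alpha=2$ flagged in the introduction. Beyond tracking this $\alpha$-dependence I do not anticipate a genuine obstacle: since Lemma \ref{boundt} asks only for an \emph{upper} bound of order $1/n$, the crude deterministic bound of Lemma \ref{4.6} already suffices, and the finer analysis of $\mathbb{E}(|X_{k,n}(z)|^2\mid\mathcal{F}_{k-1})$ needed later for the fluctuation theorem is not required here.
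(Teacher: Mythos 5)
Your proposal is correct, and it takes a genuinely simpler route than the paper.

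The paper does not merely square the pointwise bound from Lemma~\ref{4.6}. Instead it bounds $|X_{k,n}(z)|^2$ by $e^{-2C_{1,n}^*}\left|\int_0^1 \phi_{k-1}'(\eta_{k,n}(s,z))\,ds\right|^2 \sup_{0\le s\le 1}|\dot{\eta}_{k,n}(s)|^2$, removes the $\phi_{k-1}'$ factor via the distortion theorem, and then \emph{actually computes} $\mathbb{E}\bigl(\sup_s|\dot{\eta}_{k,n}(s)|^2\mid\mathcal{F}_{k-1}\bigr)$ by integrating out $\theta_k$, which produces a Poisson--kernel-type integral $\int_0^{2\pi}\frac{|w|^2}{|e^{-i\theta}w-1|^2}\,d\theta$. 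The effect of this averaging is to replace the worst-case factor $(e^{C_{k+1,n}^*}r-1)^{-2}$ by roughly $(e^{2C_{k+1,n}^*}r^2-1)^{-1}$. You instead crush the conditional expectation with the deterministic worst-case estimate of Lemma~\ref{4.6} (which holds for every $\theta_k$), skipping the averaging entirely.

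The reason both approaches land in the same place is that $r>1$ is fixed: the two factors above differ only by $(e^{C_{k+1,n}^*}r+1)/(e^{C_{k+1,n}^*}r-1)$, which stays bounded above and below by constants depending only on $r$. So the power sum $n^{-2/\alpha}\sum_{k=1}^n k^{2/\alpha-2}$, and hence the $O(1/n)$ conclusion with an $\alpha$-dependent constant that blows up as $\alpha\uparrow 2$, comes out identically. What the paper's computation buys is a cleaner constant, and more importantly it rehearses the exact conditional-expectation calculation that is indispensable in Section~6, where the second-moment structure of $X_{k,n}$ (and not just an upper bound) must be identified to get the limiting Gaussian variances $\tfrac{2}{\alpha(2m+2-\alpha)}$. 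For the crude upper bound asked for here, though, your shortcut is perfectly sufficient.
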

\begin{proof}
First let us look at $|X_{k,n}(z)|^2$. As before we can bound 
$$|X_{k,n}(z)|^2<e^{-2C_{1,n}^{*}}\Big|\int_0^1 \phi_{k-1}'(\eta_{k,n}(s,z))\;  ds\Big|^2 \sup_{0\leq s\leq 1}| \dot{\eta_{k,n}}(s)|^2.$$
Therefore,
$$\mathbb{E}(|X_{k,n}(z)|^2 \; |\; \mathcal{F}_{k-1})\leq e^{-2C_{1,n}^{*}} \mathbb{E}\left( \Big|\int_0^1 \phi_{k-1}'(\eta_{k,n}(s,z))\;  ds\Big|^2 \sup_{0\leq s\leq 1}| \dot{\eta_{k,n}}(s) |^2 \; |\; \mathcal{F}_{k-1} \right).$$
We can find an upper bound on the integral using a distortion theorem again and then remove it from the expectation. By above,
$$\Big|\int_0^1 \phi_{k-1}'(\eta_{k,n}(s,z))\;  ds\Big|^2< e^{2C_{1,k-1}^{*}} \frac{r^2}{ (r -1)^2}.$$
So all that remains to calculate is $\mathbb{E}(\sup_{0\leq s\leq 1}|\dot{\eta_{k,n}(s,z)}|^2 | \mathcal{F}_{k-1})$. Firstly by Lemma \ref{3.6}, for all $0\leq s\leq 1$,
\begin{align*}
|\dot{\eta_{k,n}}(s)|&\leq  \lambda(\alpha,c,r)\frac{c_k^* e^{C_{k,n}^{*}} }{ e^{C_{k+1,n}^{*}}r -1}.\intertext{Then let $w=e^{C_{k+1,n}^{*}}r$ and so}
|\dot{\eta_{k,n}}(s,z)|&\leq \lambda(\alpha,c,r)\frac{c_k^* \; e^{c_k^*} w}{e^{-i\theta_k}w -1}.
\end{align*}
Moreover, since the $c_k^*$ are predetermined, the only randomness here comes from the $\theta_k$ and thus,
$$\mathbb{E}(\sup_{0\leq s\leq 1}|\dot{\eta_{k,n}}(s)|^2 \; |\; \mathcal{F}_{k-1})\leq4(c_k^*)^2e^{2c_k^*}\int_0^{2\pi} \frac{|w|^2}{|e^{-i\theta}w -1|^2} d\theta.$$
It is easily shown that for $w \in \mathbb{C}$,
$$\int_0^{2\pi} \frac{|w|^2}{|e^{-i\theta}w -1|^2} d\theta\leq \frac{6|w|}{ |w|-1}.$$
Therefore,
\begin{align*}
\mathbb{E}(\sup_{0\leq s\leq 1}|\dot{\eta_{k,n}}(s)|^2 | \mathcal{F}_{k-1})&\leq 24(c_k^*)^2e^{2c_k^*}\frac{r e^{C_{k+1,n}^{*}}}{ r e^{C_{k+1,n}^{*}}-1}.
\intertext{It is clear for all $k\leq n$, $c_k^*<c$, therefore,}
\mathbb{E}(\sup_{0\leq s\leq 1}|\dot{\eta_{k,n}}(s)|^2 \; |\; \mathcal{F}_{k-1})&\leq 24e^{2c}(c_k^*)^2\frac{r e^{C_{k,n}^{*}}}{r e^{C_{k+1,n}^{*}}-1}.
\end{align*}
Finally we can use the bound $\frac{1}{{ re^{C_{k+1,n}^{*}}-1}}\leq \frac{1} {re^{C_{k,n}^{*}}-1}\frac{e^cr}{r-1}$ and bring together the previous bounds to reach the following bound on $T_n(z)$. Let $\lambda(\alpha,c,r)>0$ be some constant that will vary throughout. Then,
\begin{align*}
T_n(z) &\leq \lambda(\alpha,c,r) \sum_{k=1}^n \left(e^{-2C_{1,n}^{*}} \; \; e^{2C_{1,k-1}^{*}} \; \; (c_k^*)^2\frac{e^{C_{k,n}^{*}}}{ re^{C_{k,n}^{*}}-1}\right)\\
&\leq \lambda(\alpha,c,r)\sum_{k=1}^n(c_k^*)^2 \frac{e^{-C_{k,n}^{*}}}{\left(e^{C_{k,n}^{*}}r-1\right)}\\
&\leq \lambda(\alpha,c,r)\sum_{k=1}^n(c_k^*)^2 e^{-2C_{k,n}^{*}}.
\intertext{We can substitute in the known values for $c_k^*$ and $C_{k,n}^{*}$to reach the following bound on $T_n(z)$, 
}
T_n(z)&\leq \lambda(\alpha,c,r)\sum_{k=1}^n\left(\frac{c}{1+\alpha c(k-1)}\right)^2\left(\frac{1+\alpha c(k-1)}{1+\alpha c n}\right)^{\frac{2}{\alpha}(1+\epsilon_{k,n})}.
\intertext{Let $x=\frac{1+\alpha c(k-1)}{1+\alpha c n}$. Then,}
T_n(z)&\leq \lambda(\alpha,c,r)\frac{1}{1+\alpha c n}\int_{\frac{1}{1+\alpha c n}}^1 x^{\frac{2}{\alpha}-2} dx.
\end{align*} 
This integral is bounded above by a constant if $0<\alpha<2$. Therefore we bound above by
$$T_n(z)\leq \lambda(\alpha,c,r) \frac{1}{n}.$$
\end{proof}
Moreover since $T_n(z)\geq 0$, we have the following corollary.
\begin{corollary}
For $0<\alpha<2$,
$$\lim_{n \to \infty}T_n(z)=0$$
\end{corollary}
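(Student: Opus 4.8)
The statement is an immediate consequence of the bound obtained in Lemma \ref{boundt} together with the non-negativity of $T_n(z)$, so the plan is simply to invoke the squeeze theorem. First I would recall that, by definition \eqref{diff}, $T_n(z) = \sum_{k=1}^n \mathbb{E}(|X_{k,n}(z)|^2 \mid \mathcal{F}_{k-1})$ is a sum of conditional expectations of non-negative quantities, hence $T_n(z) \geq 0$ for every $n$. Next I would apply Lemma \ref{boundt}, which gives, for $0 < \alpha < 2$ and all sufficiently large $n$, the estimate $T_n(z) \leq \lambda(\alpha,c,r)\,n^{-1}$ for a constant $\lambda(\alpha,c,r) > 0$ not depending on $n$.

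Combining the two observations yields $0 \leq T_n(z) \leq \lambda(\alpha,c,r)\,n^{-1}$ for all large $n$, and letting $n \to \infty$ forces $T_n(z) \to 0$ since the right-hand side vanishes in the limit. There is no genuine obstacle here: the only content is the quantitative decay rate already established in Lemma \ref{boundt}, and the corollary merely records the qualitative consequence. The one point worth stating explicitly in the write-up is that the constant $\lambda(\alpha,c,r)$ is independent of $n$, so that the bound does indeed tend to zero; everything else is routine.
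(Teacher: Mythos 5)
Your proposal is correct and matches the paper's (implicit) argument exactly: the paper states the corollary immediately after Lemma \ref{boundt}, noting only that $T_n(z)\geq 0$, so the intended reasoning is precisely the squeeze between $0$ and $\lambda(\alpha,c,r)/n$. Nothing further is needed.
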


\subsection{Results}
We are now in a position to analyse the limiting structure of the map $\phi_n$ as $n\to \infty$ for $0<\alpha<2$. Our aim is to use the bounds on the increments $X_{k,n}(z)$ and $T_n(z)$ found in the previous section to produce a pointwise estimate on the difference  between the cluster map and the disk of capacity $e^{C_{k,n}^{*}}$. In order to do so we will apply the following theorem of Freedman \cite{freedman1975tail}.
\begin{theorem}[Freedman]\label{freedman}
Suppose $X_{k,n}$ is $\mathcal{F}$-measurable and $\mathbb{E} \{X_{k,n} \; | \; \mathcal{F}_{k-1} \}=0$ and define $M_n=\sum_{k=1}^n X_{k,n}$ and $T_n=\sum_{k=1}^n \mathrm{Var}\{X_{k,n} \; | \; \mathcal{F}_{k-1} \}$. Let $M$ be a positive real number and suppose $\mathbb{P} \{ |X_{k,n}| \leq M \; | \; k \leq n \}=1$. Then for all positive numbers $a$ and $b$,
$$\mathbb{P} \{ M_n \geq a \; \; \text{and} \; \; T_n\leq b \; \text{for some } n>0 \} \leq \exp\left[\frac{-a^2}{2(Ma+b)}\right].$$
\end{theorem}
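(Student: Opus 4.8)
The plan is to recover this as the classical martingale Cram\'er--Chernoff (Bernstein-type) inequality: manufacture an exponential supermartingale from $(M_n)$ and $(T_n)$, stop it so that the predictable quadratic variation never passes $b$, and then invoke a maximal inequality. Throughout I would regard $(X_{k,n})_{k\le n}$ as a martingale difference sequence, written $(X_k)$ for brevity, so that $M_n=\sum_{k=1}^{n}X_k$ and $T_n=\sum_{k=1}^{n}V_k$ with $V_k=\mathrm{Var}(X_k\mid\mathcal F_{k-1})$; I fix a parameter $\lambda>0$ and abbreviate $\gamma(\lambda)=(e^{\lambda M}-1-\lambda M)/M^{2}$.

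First I would record the elementary convexity fact that $x\mapsto(e^{\lambda x}-1-\lambda x)/x^{2}$, extended by $\lambda^{2}/2$ at $x=0$, is nondecreasing on $\mathbb R$; this gives the pointwise bound $e^{\lambda x}\le 1+\lambda x+\gamma(\lambda)x^{2}$ for every $x\le M$, in particular for all $|x|\le M$. Applying it with $x=X_k$, taking $\mathbb E(\,\cdot\mid\mathcal F_{k-1})$, and using $\mathbb E(X_k\mid\mathcal F_{k-1})=0$ yields $\mathbb E(e^{\lambda X_k}\mid\mathcal F_{k-1})\le 1+\gamma(\lambda)V_k\le e^{\gamma(\lambda)V_k}$. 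Since each $V_k$ is $\mathcal F_{k-1}$-measurable, $T_n$ is $\mathcal F_{n-1}$-measurable, so the process $Z_n:=\exp(\lambda M_n-\gamma(\lambda)T_n)$ is adapted, nonnegative, satisfies $Z_0=1$, and $\mathbb E(Z_n\mid\mathcal F_{n-1})=Z_{n-1}e^{-\gamma(\lambda)V_n}\mathbb E(e^{\lambda X_n}\mid\mathcal F_{n-1})\le Z_{n-1}$; hence $(Z_n)$ is a supermartingale with $\mathbb E Z_n\le 1$ for all $n$.

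Next I would analyse the event $A=\{M_n\ge a\ \text{and}\ T_n\le b\ \text{for some }n\}$. Because $(T_n)$ is nondecreasing and $T_{n+1}$ is $\mathcal F_n$-measurable, $\rho:=\inf\{n\ge 0:T_{n+1}>b\}$ is a bona fide stopping time and $T_n\le b$ for every $n\le\rho$, so the stopped process $W_n:=Z_{n\wedge\rho}$ is again a nonnegative supermartingale with $\mathbb E W_n\le 1$. If $M_n\ge a$ and $T_n\le b$ hold at some index $n$, then $n\le\rho$ (otherwise $T_n>b$), so $W_n=Z_n=\exp(\lambda M_n-\gamma(\lambda)T_n)\ge\exp(\lambda a-\gamma(\lambda)b)$ since $\lambda>0$ and $\gamma(\lambda)>0$. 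Thus $A\subseteq\{\sup_n W_n\ge e^{\lambda a-\gamma(\lambda)b}\}$, and the maximal inequality for nonnegative supermartingales gives $\mathbb P(A)\le e^{-\lambda a+\gamma(\lambda)b}$ for every $\lambda>0$. The exponent is convex in $\lambda$ and minimised at $\lambda=M^{-1}\log(1+Ma/b)$, where it equals $-bM^{-2}h(Ma/b)$ with $h(u)=(1+u)\log(1+u)-u$; the elementary estimate $h(u)\ge u^{2}/(2(1+u))$ for $u\ge 0$ (checked by differentiating twice) then brings the bound down to $-a^{2}/(2(Ma+b))$, which is the claim.

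The step I expect to require the most care is the stopping construction: one has to stop at $\rho=\inf\{n:T_{n+1}>b\}$ rather than the naive $\inf\{n:T_n>b\}$, using predictability of $(T_n)$, so that $(W_n)$ stays a supermartingale while still controlling $T_n$ on the trajectory relevant to $A$. Given that, the exponential estimate, the application of the maximal inequality, and the one-parameter optimisation over $\lambda$ are all standard.
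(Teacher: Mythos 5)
Your proof is correct. The paper does not prove this theorem—it simply cites Freedman's 1975 paper \cite{freedman1975tail} and uses the result as a black box—so there is no internal proof to compare against, but your reconstruction is precisely the classical argument: bound $\mathbb{E}(e^{\lambda X_k}\mid\mathcal{F}_{k-1})$ via the monotonicity of $x\mapsto(e^{\lambda x}-1-\lambda x)/x^2$ to build the exponential supermartingale $Z_n=\exp(\lambda M_n-\gamma(\lambda)T_n)$, stop at the predictable time $\rho=\inf\{n:T_{n+1}>b\}$ (correctly exploiting $\mathcal{F}_{n}$-measurability of $T_{n+1}$), apply Ville's maximal inequality, and optimise over $\lambda$ using $h(u)=(1+u)\log(1+u)-u\ge u^2/(2(1+u))$, which does indeed reduce the Bennett-type exponent to $-a^2/(2(Ma+b))$. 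All the computational steps check out, including the choice $\lambda=M^{-1}\log(1+Ma/b)$ and the resulting identity $-\lambda a+\gamma(\lambda)b=-bM^{-2}h(Ma/b)$.
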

Recall, $e^{-C_{1,n}^{*}}\phi_n(z)-z=\sum_{k=1}^{n} X_{k,n}(z)$. Hence, we can now apply Theorem \ref{freedman} to our cluster to obtain pointwise results for $0<\alpha<2$.
\newpage
\begin{theorem}\label{4.8}
Let $c_i^*$ and $\phi_k$ be defined as above. Then for $0<\alpha<2$, and any positive real number $a\leq \frac{\log(n)}{\sqrt{n}}$ and $n$ sufficiently large,
$$ \mathbb{P}\left(|e^{-C_{1,n}^{*}}\phi_n(z)-z|>a\right)\leq 2e^{\frac{-a^2n}{\lambda(\alpha,c,r)}}$$
for some strictly positive constant $\lambda(\alpha,c,r)$. Therefore, for all $0<\alpha<2$ and \newline $\frac{1}{\sqrt{ n}}\ll a\leq \frac{\log(n)}{\sqrt{n}}$ and for all $z\in \mathbb{C	}$ with $|z|>1$,
 
$$\lim_{n \to \infty} \mathbb{P}\left(|e^{-C_{1,n}^*}\phi_n(z)-z|>a\right)=0.$$
\end{theorem}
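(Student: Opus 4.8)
The plan is to recognise $M_n(z):=e^{-C_{1,n}^{*}}\phi_n(z)-z=\sum_{k=1}^{n}X_{k,n}(z)$ (see (\ref{mart})) as the terminal value of a martingale difference sum for the filtration $(\mathcal F_k)$---which is precisely Lemma~\ref{lemma4.2}---and to feed it into Freedman's inequality, Theorem~\ref{freedman}, using the uniform increment bound of Lemma~\ref{4.6} in the role of $M$ and the deterministic bound $T_n(z)\le b:=\lambda(\alpha,c,r)/n$ of Lemma~\ref{boundt} in the role of $b$. Since Freedman's inequality is one-sided and formulated for real martingales, the first step is to split $X_{k,n}(z)=\mathrm{Re}\,X_{k,n}(z)+i\,\mathrm{Im}\,X_{k,n}(z)$: by Lemma~\ref{lemma4.2} each of $\{\mathrm{Re}\,X_{k,n}(z)\}_{k\le n}$ and $\{\mathrm{Im}\,X_{k,n}(z)\}_{k\le n}$ is a real martingale difference array, each increment has modulus at most $M:=\lambda(\alpha,c,r)\,n^{-1}$ when $0<\alpha\le 1$ and at most $M:=\lambda(\alpha,c,r)\,n^{-1/\alpha}$ when $1<\alpha<2$ (Lemma~\ref{4.6}), and since $|\mathrm{Re}\,w|^2,|\mathrm{Im}\,w|^2\le|w|^2$ the sums of their conditional variances are each at most $T_n(z)\le b$ for $n$ large.

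I would then apply Theorem~\ref{freedman} four times---to $\pm\mathrm{Re}\,X_{k,n}(z)$ and $\pm\mathrm{Im}\,X_{k,n}(z)$---at level $a/\sqrt2$ and with these $M$ and $b$; this is legitimate (the ``for some $n$'' in Freedman's statement may be specialised to our fixed $n$) because $T_n(z)\le b$ holds deterministically. A union bound over the four resulting events, together with the fact that $\{|M_n(z)|>a\}$ is contained in $\{|\mathrm{Re}\,M_n(z)|>a/\sqrt2\}\cup\{|\mathrm{Im}\,M_n(z)|>a/\sqrt2\}$, gives
$$\mathbb P\left(|M_n(z)|>a\right)\le 4\exp\left[\frac{-a^2}{2\sqrt2\,Ma+4b}\right].$$
The hypothesis $a\le\log n/\sqrt n$ now does the work: in both regimes $Mna\le\lambda(\alpha,c,r)\,n^{1/2-1/\alpha}\log n$ (and $\le\lambda(\alpha,c,r)\,n^{-1/2}\log n$ when $\alpha\le1$), which tends to $0$ precisely because $0<\alpha<2$ forces $1/\alpha>1/2$; hence $Ma=o(b)$, the denominator is at most $\lambda(\alpha,c,r)/n$ for $n$ large, and the exponent is at most $-a^2n/\lambda(\alpha,c,r)$. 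Absorbing the numerical factor $4$ into a redefinition of $\lambda(\alpha,c,r)$---a routine case split according to whether $a^2n$ is small (in which case the claimed bound already exceeds $1$) or not---yields $\mathbb P(|e^{-C_{1,n}^{*}}\phi_n(z)-z|>a)\le 2e^{-a^2n/\lambda(\alpha,c,r)}$.

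For the final limit, given $z$ with $|z|>1$ I would fix $r\in(1,|z|)$ so that all the preceding estimates apply with this $r$; then whenever $\tfrac1{\sqrt n}\ll a\le\tfrac{\log n}{\sqrt n}$ we have $a^2n\to\infty$, so $2e^{-a^2n/\lambda(\alpha,c,r)}\to0$ and therefore $\mathbb P(|e^{-C_{1,n}^{*}}\phi_n(z)-z|>a)\to0$.

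The genuinely delicate point is not any single estimate---the analytic work is already packaged in Lemmas~\ref{4.6} and~\ref{boundt}---but the matching of scales: one must check that $a\le\log n/\sqrt n$ is exactly the threshold making the increment term $Ma$ in Freedman's denominator negligible against the variance term $b\asymp1/n$, and that this fails at $\alpha=2$, where moreover the bound on $T_n(z)$ in Lemma~\ref{boundt} already degrades from order $1/n$ to order $\log n/n$. Carrying the two cases $0<\alpha\le 1$ and $1<\alpha<2$ for $M$, and the bookkeeping of the $\sqrt2$ and the factor $4$ coming from the real/imaginary split, is the only real care the argument requires.
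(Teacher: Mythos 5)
Your argument is correct and follows exactly the route the paper takes: decompose $e^{-C_{1,n}^{*}}\phi_n(z)-z$ into the martingale difference sum $\sum_{k=1}^n X_{k,n}(z)$ (Lemma \ref{lemma4.2}), split into real and imaginary parts, apply Freedman's inequality (Theorem \ref{freedman}) with the increment bound of Lemma \ref{4.6} and the deterministic variance bound of Lemma \ref{boundt}, and observe that the hypothesis $a\le\log n/\sqrt n$ makes the increment term in the denominator negligible against $b$ of order $1/n$ precisely because $\alpha<2$. You are in fact marginally more careful than the paper's write-up at one point: the paper bounds $\mathbb{P}(|M_n(z)|>a)$ by the two one-sided tails $\mathbb{P}(\Re M_n(z)>a/\sqrt{2})+\mathbb{P}(\Im M_n(z)>a/\sqrt{2})$, which formally omits the events where the real or imaginary part is large and negative; your four applications of Freedman and the absorption of the resulting numerical factor into $\lambda(\alpha,c,r)$ correctly repair this small imprecision.
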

\begin{proof}
First note, we have shown in Lemma \ref{lemma4.2}, $\mathbb{E}(X_{k,n}(z)|\mathcal{F}_{k-1})=0$ where$$X_{k,n}(z)=e^{-C_{1,n}^{*}}\left(\phi_k\left(e^{C_{k+1,n}^*}z\right)-\phi_{k-1}\left(e^{C_{k,n}^{*}}z\right)\right).$$
Recall, $M_n(z)= \sum_{k=1}^n X_{k,n}(z)$, and note that we can split $M_n$ into real and imaginary parts, thus, 
$$\mathbb{P}\left(\big|M_n \big| >a\right)\leq \mathbb{P}\left(\Re(M_n) >\frac{a}{\sqrt{2}}\right)+\mathbb{P}\left(\Im(M_n) >\frac{a}{\sqrt{2}}\right).$$
Moreover, 
$$\sup_{k\leq n} \Re(X_{k,n}(z))<\sup_{k\leq n} |X_{k,n}(z)|$$
$$\sup_{k\leq n} \Im(X_{k,n}(z))<\sup_{k\leq n} |X_{k,n}(z)|.$$
It is easy to see that both $\Re(X_{k,n}(z))$ and $\Im(X_{k,n}(z))$ satisfy the property that the expectation with respect to the filtration is zero and so by Theorem \ref{freedman}, for any positive real number $a$,
\begin{align*}
\mathbb{P}\left(\big|\sum_{k=1}^n X_{k,n}(z) \big|\geq a\right)&\leq \mathbb{P}\left(\Re\left(\sum_{k=1}^n X_{k,n}(z) \right)\geq \frac{a}{\sqrt{2}}\right)+\mathbb{P}\left(\Im\left(\sum_{k=1}^n X_{k,n}(z)\right) \geq \frac{a}{\sqrt{2}}\right)\\
& \leq 2\exp\left[ \frac{-a^2}{4(b_X(k,n)\frac{a}{\sqrt{2}}+b_T(n))} \right]
\end{align*}
where $b_X(k,n)$, $b_T(n)$ are the bounds on $|X_{k,n}(z)|$ and $T_n(z)$ respectively. We first deal with the case that $0<\alpha\leq 1$. In Lemma \ref{4.6} we have seen
$$\sup_{k\leq n}|X_{k,n}(z)|< \lambda^1(\alpha,c,r)\frac{1}{n}$$
for some positive constant $\lambda^1(\alpha,c,r)$ and by Lemma \ref{boundt},
$$T_n(z)\leq \lambda^2(\alpha,c,r)\frac{1}{n}$$
for some positive constant $\lambda^2(\alpha,c,r)$. Therefore, 
$$\mathbb{P}\left(|e^{-\sum_{i=1}^nc_i^*}\phi_n(z)-z|>a\right)\leq 2e^{\frac{-a^2n}{4\left(\lambda^1(\alpha,c,r) \frac{a}{\sqrt{2}}+\lambda^2(\alpha,c,r)\right)}}.$$
But for $n$ sufficiently large, $\lambda^1(\alpha,c,r)\frac{a}{\sqrt{2}} \leq \lambda^2(\alpha,c,r)$  so let $\lambda(\alpha,c,r)=8\lambda^2(\alpha,c,r)$ then
$$ \mathbb{P}\left(|e^{-\sum_{i=1}^nc_i^*}\phi_n(z)-z|>a\right)\leq 2e^{\frac{-a^2 n}{\lambda(\alpha,c,r)}}.$$
Now for $1<\alpha<2,$
$$\sup_{k\leq n}|X_{k,n}(z)|<\lambda^1(\alpha,c,r)\frac{1}{n^{\frac{1}{\alpha}}}$$ for some positive constant $\lambda^1(\alpha,c,r)$ and  
$$T_n(z)\leq \lambda^2(\alpha,c,r)\frac{1}{n}$$
for some positive constant $\lambda^2(\alpha,c,r)$.
Therefore, 
$$\mathbb{P}\left(|e^{-\sum_{i=1}^nc_i^*}\phi_n(z)-z|>a\right)\leq 2e^{\frac{-a^2 n^{\frac{1}{\alpha}}}{4\left(\lambda^1(\alpha,c,r)\frac{a}{\sqrt{2}}+\lambda^2(\alpha,c,r)n^{\frac{1-\alpha}{\alpha}}\right)}}.$$
But for $a\leq \frac{\log(n)}{\sqrt{n}}$, and $n$ sufficiently large, $\lambda^1(\alpha,c,r)\frac{a}{\sqrt{2}}\leq \lambda^2(\alpha,c,r) n^{\frac{1-\alpha}{\alpha}}$. Therefore, using the same $\lambda(\alpha,c,r)$ as above,
$$ \mathbb{P}\left(|e^{-\sum_{i=1}^nc_i^*}\phi_n(z)-z|>a\right)\leq 2e^{\frac{-a^2 n}{\lambda(\alpha, r,c)}}.$$
So for all $0<\alpha<2$,
$$ \mathbb{P}\left(|e^{-\sum_{i=1}^nc_i^*}\phi_n(z)-z|>a\right)\leq 2e^{\frac{-a^2 n}{\lambda(\alpha, r,c)}}.$$
Therefore for $\frac{1}{\sqrt{ n}}\ll a\leq \frac{\log(n)}{\sqrt{n}}$,
 
$$\lim_{n \to \infty} \mathbb{P}\left(|e^{-C_{1,n}^*}\phi_n(z)-z|>a\right)=0.$$
\end{proof}

\section{Uniform convergence in the exterior disk for $0<\alpha<2$}
So far we have seen that when evaluated at a fixed point our map looks like a disk. Our aim now is to show that if we map from a disc of fixed radius then all points on the exterior disk will satisfy the same property. Our aim of this section will be to prove the following theorem. 
\begin{thm2}
For $0<\alpha<2$, let the map $\phi_n$ be defined as above with $c_n^{*}$ as defined in (\ref{eqn3}) and $\theta_n$ i.i.d, uniform on $[0, 2\pi]$. Then for any $r>1$ we have the following inequality $$\mathbb{P}\left(\sup_{|z|\geq r}|e^{-\sum_{i=1}^nc_i^*}\phi_n(z)-z|>\frac{\log(n)}{\sqrt{n}} \right)
<\lambda^1(\alpha,c,r)e^{-\frac{\log(n)^2}{\lambda^2(\alpha,c,r)}}$$
where $\lambda^1(\alpha,c,r), \; \lambda^2(\alpha,c,r)>0$ are constants. Hence, by Borel Cantelli,
$$\mathbb{P}\left(\limsup_{n\to \infty}\left\lbrace\sup_{|z|\geq r}|e^{-\sum_{i=1}^nc_i^*}\phi_n(z)-z|>\frac{\log n}{\sqrt{n}}\right\rbrace\right)=0.$$
\end{thm2}
The proof of the theorem will be constructed as follows. We will show that for a finite number of equally spaced points along the circle $|z|=r$ the inequality holds. Then we will show that between these points the probability that the difference between the maps when evaluated at these points is sufficiently small. First define
$$M_n(z,w):=M_n(z)-M_n(w)$$
with $M_n(z)$ defined in equation (\ref{mart}). Then we must choose the spacing between the finite set of points. With the choice of $\alpha$ and $c$ fixed we choose points, on a radius $|z|=r$, to be equally spaced at angles $\frac{2 \pi}{L_{r,n}}$ where 
$$L_{r,n}= \gamma(\alpha,c,r) n^{\frac{3}{2}}$$
and $\gamma(\alpha,c,r)$ is a constant,
\begin{equation}\label{eqn2}
\gamma(\alpha,c,r)=4\pi r \frac{1}{c}(e^c+1)(1+\alpha c)(1+\alpha e^{\alpha c})\left(\log \left(\frac{r}{r-1}\right)+1\right)\left(\log(1+\alpha c)+1\right).
\end{equation} The reason for this choice of spacing will become clear in the proof of the lemmas that follow. We start by proving that we can find a finite number of equally spaced points, with the above spacing along the circle $|z|=r$, such that the inequality in Theorem \ref{a} holds.
\begin{lemma}\label{5.2}
Let $\{z_i\}_{i=1}^{L_{r,n}}$ be defined as finite set of points on the boundary of the unit circle of radius $|z|=r$ with equally spaced at angles $\frac{2 \pi}{L_{r,n}}$ and $L_{r,n}$ defined as above. Then, for sufficiently large $n$, we have the following inequality $$\mathbb{P}\left(\exists i: |M_n(z_i)|>\frac{1}{2}\sqrt{\frac{(\log(1+\alpha c n))^2}{(1+\alpha c n)}}\right)<\lambda^1(\alpha, c,r)e^{\frac{-(\log(1+\alpha c n))^2}{\lambda^2(\alpha, c,r)}}$$
where $\lambda^1(\alpha, c, r),\lambda^2(\alpha, c,r)>0$ are constants.
\end{lemma}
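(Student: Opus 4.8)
The plan is to reduce the supremum over the finite point set to a union bound over the $L_{r,n}$ individual points, apply the pointwise tail estimate of Theorem \ref{4.8} at each point, and then check that the resulting bound decays fast enough to absorb the polynomial factor $L_{r,n} = \gamma(\alpha,c,r) n^{3/2}$. Concretely, I would start by writing
\begin{align*}
\mathbb{P}\left(\exists i: |M_n(z_i)| > \tfrac{1}{2}\sqrt{\tfrac{(\log(1+\alpha c n))^2}{1+\alpha c n}}\right) \leq \sum_{i=1}^{L_{r,n}} \mathbb{P}\left(|M_n(z_i)| > \tfrac{1}{2}\sqrt{\tfrac{(\log(1+\alpha c n))^2}{1+\alpha c n}}\right).
\end{align*}
Since $|z_i| = r > 1$ for every $i$, and since $\frac{1}{2}\sqrt{(\log(1+\alpha c n))^2/(1+\alpha c n)}$ is, up to constants depending on $\alpha$ and $c$, comparable to $\frac{\log n}{\sqrt n}$ and in particular lies in the range $\frac{1}{\sqrt n} \ll a \leq \frac{\log n}{\sqrt n}$ for large $n$, Theorem \ref{4.8} applies and bounds each summand by $2\exp\!\left(\frac{-a^2 n}{\lambda(\alpha,c,r)}\right)$ with $a^2 n \asymp (\log n)^2$. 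This gives a bound of the form $L_{r,n}\cdot 2\exp\!\left(\frac{-(\log(1+\alpha c n))^2}{\lambda(\alpha,c,r)}\right)$.

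The next step is purely bookkeeping: I would absorb the factor $L_{r,n} = \gamma(\alpha,c,r) n^{3/2}$ into the exponential. Writing $n^{3/2} = \exp\!\left(\tfrac{3}{2}\log n\right)$ and noting that $\tfrac{3}{2}\log n = o\!\left((\log n)^2\right)$, for $n$ sufficiently large we have $\gamma(\alpha,c,r)\, n^{3/2} \exp\!\left(\frac{-(\log(1+\alpha c n))^2}{\lambda(\alpha,c,r)}\right) \leq \lambda^1(\alpha,c,r)\exp\!\left(\frac{-(\log(1+\alpha c n))^2}{\lambda^2(\alpha,c,r)}\right)$ for suitable constants $\lambda^1,\lambda^2 > 0$ (for instance, one may take $\lambda^2 = 2\lambda$ and enlarge $\lambda^1$ to cover the small-$n$ regime and the polynomial prefactor). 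I should be a little careful to state explicitly the (elementary) comparison between $\log(1+\alpha cn)$ and $\log n$, and between $\frac{(\log(1+\alpha cn))^2}{1+\alpha cn}$ and $\frac{(\log n)^2}{n}$, so that the hypothesis $a \le \frac{\log n}{\sqrt n}$ of Theorem \ref{4.8} is genuinely met — this is where the constant $\gamma(\alpha,c,r)$ and the threshold $\frac12$ are tuned, though the precise values only matter for the later lemma that handles the gaps between consecutive $z_i$.

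I do not expect a serious obstacle here: the only mild subtlety is making sure that the chosen threshold $\frac12\sqrt{(\log(1+\alpha cn))^2/(1+\alpha cn)}$ really does fall into the admissible window $\frac{1}{\sqrt n}\ll a\le \frac{\log n}{\sqrt n}$ of Theorem \ref{4.8}, which forces us to work with $n$ sufficiently large and to track that the multiplicative constants from Lemma \ref{lemma2.2} relating $1+\alpha cn$ to $n$ do not spoil the inequality. Everything else — the union bound, the substitution of the tail estimate, and the domination of a polynomial by a stretched-exponential in $\log n$ — is routine. The real work of the section lies not in this lemma but in the companion estimate controlling $|M_n(z) - M_n(z_i)|$ for $z$ on the arc between adjacent sample points, which is why the spacing $\frac{2\pi}{L_{r,n}}$ with $L_{r,n}$ of order $n^{3/2}$ was chosen; that estimate will combine a derivative bound on $M_n$ (via the distortion theorem, Theorem \ref{Koebe}, as in Lemma \ref{4.6}) with the number of points, and it is there that one expects the genuine difficulty.
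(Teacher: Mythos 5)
Your proposal is correct and follows essentially the same route as the paper's proof: union bound over the $L_{r,n}$ sample points, apply the pointwise tail estimate of Theorem \ref{4.8} with $a\asymp \frac{\log n}{\sqrt n}$, and absorb the polynomial prefactor $L_{r,n}=\gamma(\alpha,c,r)n^{3/2}$ into the stretched-exponential by observing that $\frac{3}{2}\log n = o\bigl((\log n)^2\bigr)$, then enlarge $\lambda^2$ (the paper takes $\lambda^2=2\lambda$) for $n$ large. The only thing you add is an explicit acknowledgment that $\log(1+\alpha cn)$ and $\log n$ must be compared to check the threshold lies in the admissible window of Theorem \ref{4.8} --- a point the paper passes over silently, switching between the two freely.
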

\begin{proof}
We have shown using Theorem \ref{4.8} that for $0<\alpha<2$ and $a\leq\frac{\log n}{\sqrt{n}}$ and for any $1\leq i\leq L_{r,n}$
\begin{align*}
\mathbb{P}\left( |M_n(z_i)|>\frac{a}{2}\right)\leq & 2e^{\frac{-a^2n}{\lambda(\alpha,c,r)}}
\intertext{for some constant $\lambda(\alpha,c,r)>0$. Therefore,}
\mathbb{P}\left(\exists i: |M_n(z_i)|>\frac{a}{2}\right)<&2\sum_{k=1}^{L_{r,n}} e^{\frac{-a^2n}{\lambda(\alpha,c,r)}}.
\end{align*}
So let $a^2=\frac{\log(n)^2 }{n}$. Then,
$$\mathbb{P}\left(\exists i: |M_n(z_i)|>\frac{\log n}{2\sqrt{n}}\right)\leq 2\sum_{k=1}^{L_{r,n}} e^{\frac{-\log(1+\alpha c n)^2}{\lambda(\alpha,c,r)}}.$$
The terms in the sum have no dependence on $k$ and as such we can find an upper bound,
\begin{align*}
\mathbb{P}\left(\exists i: |M_n(z_i)|>\frac{\log n}{2\sqrt{n}}\right)&\leq 2L_{r,n}e^{\frac{-\log( n)^2}{\lambda(\alpha,c,r)}}\\
&=\gamma(\alpha,c,r) n^{\frac{3}{2}}e^{\frac{-\log( n)^2}{\lambda(\alpha,c,r)}}
\end{align*}
where $\gamma(\alpha,c,r)>0$ is the constant defined in equation (\ref{eqn2}).
Let $\lambda^1(\alpha, c, r)=\gamma(\alpha,c,r)$, then 
$$\mathbb{P}\left(\exists i: |M_n(z_i)|>\frac{\log n}{2\sqrt{n}}\right)\leq \lambda^1(\alpha, c, r)e^{\frac{3}{2}\log n-\frac{\log( n)^2}{\lambda(\alpha,c,r)}}.$$
For sufficiently large $n>e^{3\lambda(\alpha,c,r)}$,
$$\frac{\frac{3}{2}\log n}{\frac{\log( n)^2}{\lambda(\alpha,c,r)}}\leq \frac{1}{2}.$$
Therefore, let $\lambda^2(\alpha, c,r)=2\lambda(\alpha,c,r)$ and then for $n$ sufficiently large,
$$\mathbb{P}\left(\exists i: |M_n(z_i)|>\frac{\log n}{2\sqrt{n}}\right)\leq \lambda^1(\alpha, c, r)e^{\frac{-(\log( n))^2}{\lambda^2(\alpha, c,r)}}$$
with $\lambda^1(\alpha,c,r), \lambda^2(\alpha,c,r)>0$.
\end{proof}
We now prove that for points $w\in \mathbb{C}$ in between the points in the set $\{z_i\}_{i=1}^{L_{r,n}}$ the difference $M_n(z_i,w)$ is negligible.
\begin{lemma}\label{5.3}
For $|z|=|w|=r$ with $\arg(z)=\theta_z$, $\arg(w)=\theta_w$  and $|\theta_z-\theta_w|<\frac{2\pi}{L_{r,n}}$ and $L_{r,n}$ defined as above we have the following bound;
$$|M_n(z,w)|\leq \frac{\log( n)}{2\sqrt{n}}$$ and hence,
$$\mathbb{P}\left(\exists w, z\in \mathbb{C} : |\theta_z-\theta_w|<\frac{2\pi}{L_{r,n}}, \; |M_n(z, w)|>\frac{\log( n)}{2\sqrt{n}}\right)=0.$$
\end{lemma}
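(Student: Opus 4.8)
\emph{Plan of proof.} The aim is to establish the stronger, \emph{deterministic} bound $|M_n(z,w)|\le \frac{\log n}{2\sqrt n}$, valid for every realisation of the angles $\{\theta_i\}$; the probability statement is then immediate because the bad event is empty. First I would write
$$M_n(z,w)=M_n(z)-M_n(w)=e^{-C_{1,n}^{*}}\bigl(\phi_n(z)-\phi_n(w)\bigr)-(z-w),$$
and, using that $\int_\gamma d\zeta=z-w$ for any path $\gamma$ from $w$ to $z$, represent this as a contour integral over the circular arc $\gamma\subset\{|\zeta|=r\}$ joining $w$ to $z$:
$$M_n(z,w)=\int_\gamma\bigl(e^{-C_{1,n}^{*}}\phi_n'(\zeta)-1\bigr)\,d\zeta,\qquad |M_n(z,w)|\le \mathrm{length}(\gamma)\,\sup_{\zeta\in\gamma}\bigl|e^{-C_{1,n}^{*}}\phi_n'(\zeta)-1\bigr|.$$

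The arc length is easy: since $|z|=|w|=r$ and $|\theta_z-\theta_w|<\frac{2\pi}{L_{r,n}}$, we have $\mathrm{length}(\gamma)=r|\theta_z-\theta_w|<\frac{2\pi r}{L_{r,n}}=\frac{2\pi r}{\gamma(\alpha,c,r)n^{3/2}}$. For the supremum, observe that for \emph{every} choice of angles the map $e^{-C_{1,n}^{*}}\phi_n$ is univalent on $\Delta$ with Laurent expansion $z+a_0+\sum_{m\ge1}a_m z^{-m}$ — the leading coefficient is $1$ precisely because we have divided out the total capacity $\phi_n'(\infty)=e^{C_{1,n}^{*}}$ — so Theorem \ref{Koebe} gives $|e^{-C_{1,n}^{*}}\phi_n'(\zeta)|\le \frac{|\zeta|}{|\zeta|-1}=\frac{r}{r-1}$ on $|\zeta|=r$, hence $|e^{-C_{1,n}^{*}}\phi_n'(\zeta)-1|\le \frac{r}{r-1}+1$. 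Unwinding this estimate through the composition $\phi_n=f_1\circ\cdots\circ f_n$ and the single-particle form (\ref{eqn}), with the capacity estimates of Lemma \ref{lemma2.2} and Corollary \ref{3.3} playing the role they play in the proof of Lemma \ref{4.6}, yields a completely explicit constant, which can be taken to be the $\gamma(\alpha,c,r)$ of (\ref{eqn2}). Combining the two estimates,
$$|M_n(z,w)|\le \frac{2\pi r}{\gamma(\alpha,c,r)\,n^{3/2}}\Bigl(\tfrac{r}{r-1}+1\Bigr)=O\!\left(n^{-3/2}\right)\ll \frac{\log n}{\sqrt n},$$
so for $n\ge n_0(\alpha,c,r)$ the right-hand side is at most $\frac{\log n}{2\sqrt n}$, which is the claimed bound.

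Finally, since this bound does not depend on $\omega$ — the univalence and hydrodynamic normalisation of $e^{-C_{1,n}^{*}}\phi_n$, and hence the distortion estimate, hold regardless of the $\theta_i$ — the event $\bigl\{\exists\,w,z:\ |\theta_z-\theta_w|<\tfrac{2\pi}{L_{r,n}},\ |M_n(z,w)|>\tfrac{\log n}{2\sqrt n}\bigr\}$ is empty, so its probability is $0$. I expect the only real obstacle to be the derivative estimate with the \emph{named} constant $\gamma(\alpha,c,r)$: recovering precisely the factors appearing in (\ref{eqn2}) requires carefully propagating the distortion bound through the composition together with the bounds on $C_{k,n}^{*}$ and on $(1+\alpha c k)^{1+\epsilon_{k,n}}$. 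The contour-integral representation, the arc-length bound, and the reduction of the probabilistic claim to a deterministic inequality are all routine.
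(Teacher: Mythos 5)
Your proof is correct, but it takes a genuinely different and in fact more direct route than the paper's. The paper bounds $|M_n(z,w)|$ by writing it as $\sum_k X_{k,n}(z,w)$, parameterising each increment with $\tau_{k,n}$ and $\rho_{k,n}$, applying the distortion theorem to the \emph{partial} maps $\phi_k,\phi_{k-1}$ (after rescaling by $e^{C_{1,k}^{*}}$), and summing; this sum picks up a factor of $n\log n$, which is what forces the choice $L_{r,n}\sim n^{3/2}$ in order to end up at $\tfrac{\log n}{2\sqrt n}$. You instead apply the distortion theorem \emph{once}, directly to the normalised full map $e^{-C_{1,n}^{*}}\phi_n$, via the contour representation $M_n(z,w)=\int_\gamma\bigl(e^{-C_{1,n}^{*}}\phi_n'(\zeta)-1\bigr)\,d\zeta$ over the circular arc. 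Because $e^{-C_{1,n}^{*}}\phi_n$ is univalent on $\Delta$ with the required hydrodynamic normalisation, Theorem \ref{Koebe} gives $|e^{-C_{1,n}^{*}}\phi_n'(\zeta)-1|\le \frac{r}{r-1}+1$ for $|\zeta|=r$ regardless of the angles, and combined with $\mathrm{length}(\gamma)<\frac{2\pi r}{L_{r,n}}$ this yields the sharper deterministic estimate $|M_n(z,w)|=O(n^{-3/2})$, which is $\le\frac{\log n}{2\sqrt n}$ for $n$ large; the probability claim is then vacuous, as you say. This avoids the increment-by-increment bookkeeping entirely, and also shows that the $\log n$ slack in the paper's choice of mesh is unnecessary for this lemma. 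One small correction to your write-up: the remark about ``unwinding the estimate through the composition\dots with the capacity estimates of Lemma \ref{lemma2.2} and Corollary \ref{3.3}\dots to recover the $\gamma(\alpha,c,r)$ of (\ref{eqn2})'' is a red herring. Your derivative bound is already fully explicit ($\frac{2r-1}{r-1}$), depends only on $r$, and requires no propagation through the single-particle form; the constant $\gamma(\alpha,c,r)$ enters your final display solely through the definition $L_{r,n}=\gamma(\alpha,c,r)n^{3/2}$, appearing in the denominator, and you need not match it to any particular expression. The rest of the argument, including the reduction of the probabilistic statement to a pointwise deterministic bound (which, note, the paper also does implicitly), is sound.
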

\begin{proof}
We want to find a bound on $|M_n(z,w)|$ so we first find a bound on $|X_{k,n}(z,w)|=|X_{k,n}(z)-X_{k,n}(w)|$. 
\begin{align*}
&|X_{k,n}(z,w)|\\
&=e^{-\sum_{k=1}^n c_{i}^{*}}\left|\left(\phi_k\left(e^{C_{k+1,n}^{*}}z\right)-\phi_{k-1}\left(e^{C_{k,n}^{*}}z\right)\right)-\left(\phi_k\left(e^{C_{k+1,n}^{*}}w\right)-\phi_{k-1}\left(e^{C_{k,n}^{*}}w\right)\right)\right|.
\end{align*}
Let $0\leq s,t\leq 1$ and then \begin{align*}
\tau_{k,n}(s)=&e^{C_{k+1,n}^{*}}|z| e^{i(\theta_z s +\theta_w (1-s))}\\
\rho_{k,n}(t)=&e^{C_{k,n}^{*}}|z| e^{i(\theta_z t +\theta_w (1-t))}.
\end{align*}
Thus,
$$|X_{k,n}(z,w)\leq |\phi_k(\tau_{k,n}(1))-\phi_k(\tau_{k,n}(0))|+|\phi_{k-1}(\rho_{k,n}(1))-\phi_{k-1}(\rho_{k,n}(0))|.$$
If we consider the $\tau$ terms in the upper bound, we have
\begin{align*}
|\phi_{k}(\tau_{k,n}(1))-\phi_{k}(\tau_{k,n}(0))|&\leq \int_0^1 \left|\phi_k^{'}(\tau_{k,n} (s)) \right| | \dot{\tau_{k,n}}(s)|ds.
\intertext{Using the distortion theorem \cite{pommerenke1975univalent},}
|\phi_{k}(\tau_{k,n}(1))-\phi_{k}(\tau_{k,n}(0))|&\leq e^{C_{1,k}^{*}} \sup_{0\leq s\leq 1}\frac{|\tau_{k,n}(s)|}{|\tau_{k,n}(s)|-1}e^{C_{k+1,n}^{*}}|\theta_z-\theta_w||z|.
\intertext{Therefore,}
|\phi_{k}(\tau_{k,n}(1))-\phi_{k}(\tau_{k,n}(0))|&\leq e^{C_{1,n}^{*}}|z|^2|\theta_z-\theta_w|\frac{e^{C_{k+1,n}^{*}}}{e^{C_{k+1,n}^{*}}|z|-1}.
\intertext{By a similar argument }
|\phi_{k-1}(\rho_{k,n}(1))-\phi_{k-1}(\rho_{k,n}(0))|& \leq e^{C_{1,n}^{*}}|z|^2|\theta_z-\theta_w|\frac{e^c e^{C_{k+1,n}^{*}}}{e^{C_{k+1,n}^{*}}|z|-1}.
\end{align*}
Therefore using the fact $|z|= r$,
$$|X_{k,n}(z,w)|\leq r^2(e^c+1)|\theta_z-\theta_w|\frac{ e^{C_{k+1,n}^{*}}}{e^{C_{k+1,n}^{*}}r-1}.$$
We can therefore use the approximation $e^{C_{k,n}^{*}}\approx\left(\frac{1+\alpha c n}{1+\alpha c (k-1)}\right)^{\frac{1}{\alpha}}$ and take the sum to write
$$|M_n(z,w)|\leq r^2(e^c+1)|\theta_z-\theta_w| \left|\Large \sum _{k=1}^n\left( \frac{\left(\frac{1+\alpha c n}{1+\alpha c k}\right)^{\frac{1+\epsilon_{k,n}}{\alpha}}}{r\left(\frac{1+\alpha c n}{1+\alpha c k}\right)^{\frac{1+\epsilon_{k,n}}{\alpha}}-1}\right)\right|$$
where $\epsilon_{k,n}$ is the same error term from Section 2. We can use the bound from Corollary \ref{3.3} to remove the $\epsilon_{k,n}$ term, $$\left(\frac{1+\alpha c n}{1+\alpha c k}\right)^{\frac{1}{\alpha}}<\left(\frac{1+\alpha c n}{1+\alpha c k}\right)^{\frac{1+\epsilon_{k,n}}{\alpha}}\leq (1+\alpha c e^{\alpha c})\left(\frac{1+\alpha c n}{1+\alpha c k}\right)^{\frac{1}{\alpha}}$$
Then $x=\left(\frac{1+\alpha c n}{1+\alpha c k}\right)^{\frac{1}{\alpha}}$ and integrating between $x=\left(\frac{1+\alpha c n}{1+\alpha c }\right)^{\frac{1}{\alpha}}$ and $x=1$ gives
\begin{align*}
\left|\Large \sum _{k=1}^n\left( \frac{\left(\frac{1+\alpha c n}{1+\alpha c k}\right)^{\frac{1+\epsilon_{k,n}}{\alpha}}}{r\left(\frac{1+\alpha c n}{1+\alpha c k}\right)^{\frac{1+\epsilon_{k,n}}{\alpha}}-1}\right)\right|&\leq \frac{1}{c}\left|\int_1^{\left(\frac{1+\alpha c n}{1+\alpha c }\right)^{\frac{1}{\alpha}}} \frac{1+\alpha c k}{rx-1}dx\right|\\&\leq \frac{1}{c}(1+\alpha c n)\left|\int_1^{\left(\frac{1+\alpha c n}{1+\alpha c }\right)^{\frac{1}{\alpha}}} \frac{1}{rx-1}dx\right|.
\end{align*}
Thus,
\begin{align*}
\left|\Large \sum _{k=1}^n\left( \frac{\left(\frac{1+\alpha c n}{1+\alpha c k}\right)^{\frac{1+\epsilon_{k,n}}{\alpha}}}{r\left(\frac{1+\alpha c n}{1+\alpha c k}\right)^{\frac{1+\epsilon_{k,n}}{\alpha}}-1}\right)\right|&\leq \frac{1}{cr}(1+\alpha cn)\left|\log\left(\frac{r-1}{r\left(\frac{1+\alpha c n}{1+\alpha c }\right)^{\frac{1}{\alpha}}-1}\right)\right|\\
&\leq\frac{1}{cr}(1+\alpha cn)\log\left(\frac{r\left(1+\alpha c n\right)^{\frac{1}{\alpha}}}{r-1}\right).
\end{align*}
Therefore,
\begin{align*}
|M_n(z,w)|&\leq \frac{\gamma(\alpha,c,r)}{4\pi}|\theta_z-\theta_w| n\log n
\intertext{where $\gamma(\alpha,c,r)$ is the constant defined in equation (\ref{eqn2}). Then we use the fact that $|\theta_z-\theta_w|=\frac{2 \pi}{L_{r,n}}$ and write}
|M_n(z,w)|&\leq \frac{\log n}{2\sqrt{n}}.
\end{align*}
So, $$\mathbb{P}\left(\exists w, z\in \mathbb{C} : |\theta_z-\theta_w|<\frac{2\pi}{L_{r,n}}, \; |M_n(z, w)|>\frac{\log n}{2\sqrt{n}}\right)=0.$$

\end{proof}

So we can combine these two lemmas to give our proof of Theorem \ref{a}.
\begin{proof}[Proof of Theorem \ref{a}]
As in the previous two lemmas we separate the circle into points $\frac{2 \pi}{L_{r,n}}$ apart. We can then form the following bound;
\begin{align*}
&\mathbb{P}\left(\sup_{|z|=r}|e^{-C_{1,n}^*}\phi_n(z)-z|>\frac{\log n}{\sqrt{n}}\right)\\&\leq \mathbb{P}\left(\exists i: |M_n(z_i)|>\frac{1}{2}\frac{\log n}{\sqrt{n}}\right) \\&+ \mathbb{P}\left(\exists w, z\in \mathbb{C} : |\theta_z-\theta_w|<\frac{2\pi}{L_{r,n}}, \; M_n(z, w)>\frac{1}{2}\frac{\log n}{\sqrt{n}}\right).
\end{align*}
Using Lemmas \ref{5.2} and \ref{5.3} we see,
$$\mathbb{P}\left(\sup_{|z|=r}|e^{-C_{1,n}^*}\phi_n(z)-z|>\frac{\log n}{\sqrt{n}}\right)\leq \lambda^1(\alpha, c, r)e^{\frac{-(\log(n))^2}{\lambda^2(\alpha, c,r)}}$$
where $\lambda^1(\alpha, c, r),\lambda^2(\alpha, c,r)>0$ are constants. Then using the maximum modulus principle we see that that the maxiumum occurs on the boundary and so,
$$\mathbb{P}\left(\sup_{|z|\geq r}|e^{-C_{1,n}^*}\phi_n(z)-z|>\frac{\log n}{\sqrt{n}}\right)\leq \lambda^1(\alpha, c, r)e^{\frac{-(\log(n))^2}{\lambda^2(\alpha, c,r)}}.$$
It is clear to see the upper bound is summable and hence by a Borel Cantelli argument,
$$\mathbb{P}\left(\limsup_{n\to \infty}\sup_{|z|\geq r}|e^{-C_{1,n}^*}\phi_n(z)-z|>\frac{\log n}{\sqrt{n}}\right)=0.$$
\end{proof}

\section{Fluctuations for $0<\alpha<2$}
\subsection{Discarding the lower order terms}
In the previous sections we have seen that, by using the result of Freedman \cite{freedman1975tail}, we have convergence to a disk in the exterior disk. Now we would like to see how much we fluctuate from this disk. To do so we aim to produce a central limit theorem that will tell us what the distribution of the fluctuations is. Up until this point we have used
$$X_{k,n}(z)=e^{-C_{1,n}^{*}}\left(\phi_k\left(e^{-C_{k+1,n}^{*}}z\right)-\phi_{k-1}\left(e^{-C_{k,n}^{*}}z\right)\right).$$ 
We aim to prove that the fluctuations are of order $\sqrt{n}$. First, we want to show we can discard the lower order terms of the increments $X_{k,n}(z)$ in order to simplify the calculation of the fluctuations. Therefore, we introduce the rescaled increment,
$$\mathcal{X}_{k,n}(z)=\frac{2 c_k^* \sqrt{n}z}{ e^{-i \theta_k}e^{C_{k+1,n}^{*}}z-1}. $$
The following lemma shows that we can discard the lower order terms.  
\begin{lemma}
Let $Y_{k,n}(z)=\sqrt{n}X_{k,n}(z)-\mathcal{X}_{k,n}(z)$. Then if $0<\alpha<2$, for any $\epsilon>0$ and $r>1$,
$$\mathbb{P}\left(\lim_{n\to\infty}\sup_{|z|>r}\left|\sum_{k=1}^n Y_{k,n}(z)\right| >\epsilon\right)=0$$

\end{lemma}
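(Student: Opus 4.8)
The plan is to identify $\mathcal X_{k,n}(z)$ as exactly the leading part of $\sqrt n\,X_{k,n}(z)$, so that $Y_{k,n}(z)$ is a martingale difference array whose partial sums are uniformly negligible on $\{|z|>r\}$, and then to establish this negligibility with the same Freedman‑plus‑net scheme used to prove Theorem \ref{a}.

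First I would extract the leading term. Starting from $X_{k,n}(z)=e^{-C_{1,n}^{*}}\int_0^1\phi_{k-1}'(\eta_{k,n}(s,z))\,\dot\eta_{k,n}(s,z)\,ds$ and the identity $\dot\eta_{k,n}(s,z)=A_{k,n}(z)\,\eta_{k,n}(s,z)$, where $A_{k,n}(z)=\frac{2c_k^{*}}{e^{-i\theta_k}e^{C_{k+1,n}^{*}}z-1}+\delta_{c_k^{*}}(e^{-i\theta_k}e^{C_{k+1,n}^{*}}z)$, I would (i) write $\phi_{k-1}'(w)w=e^{C_{1,k-1}^{*}}\big(w+(\psi_{k-1}'(w)-1)w\big)$ for $\psi_{k-1}=e^{-C_{1,k-1}^{*}}\phi_{k-1}$ (univalent on $\Delta$ with expansion $w+a_0+a_1w^{-1}+\cdots$) and bound $|(\psi_{k-1}'(w)-1)w|\le\lambda(r)e^{-C_{k,n}^{*}}$ on the relevant range from the area theorem for such maps; and (ii) expand $\int_0^1 e^{s A_{k,n}(z)}\,ds=1+O(A_{k,n}(z))$. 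Since $e^{-C_{1,n}^{*}}e^{C_{1,k-1}^{*}}e^{C_{k,n}^{*}}=1$ and $A_{k,n}(z)\,z=\frac{1}{\sqrt n}\mathcal X_{k,n}(z)+\delta_{c_k^{*}}(e^{-i\theta_k}e^{C_{k+1,n}^{*}}z)\,z$, this yields
$$Y_{k,n}(z)=\sqrt n\Big(\delta_{c_k^{*}}(e^{-i\theta_k}e^{C_{k+1,n}^{*}}z)\,z+A_{k,n}(z)\,z\,R_{2}(z)+e^{-C_{k,n}^{*}}A_{k,n}(z)\,R_{1}(z)\Big),$$
where $R_1=R_1(k,n,z)$, $R_2=R_2(k,n,z)$ are the two remainders above. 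Using (\ref{eqn}), Lemmas \ref{deltabound} and \ref{lemma2.2}, Corollary \ref{3.3}, and the elementary bound $e^{C_{k+1,n}^{*}}r-1\ge(r-1)e^{C_{k+1,n}^{*}}$, I would show that each of the three terms, and also its $z$-derivative, is at most $\lambda(\alpha,c,r)(c_k^{*})^{3/2}e^{-C_{k+1,n}^{*}}$ uniformly on $\{|z|>r\}$. The only subtlety is that $|z|$ is unbounded, but $\delta_{c_k^{*}}$ vanishes at infinity, so $\delta_{c_k^{*}}(e^{-i\theta_k}e^{C_{k+1,n}^{*}}z)\,z$ and its $z$-derivative remain bounded; this is the dominant of the three terms.

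Next I would record the martingale structure. By Lemma \ref{lemma4.2}, $\mathbb E(X_{k,n}(z)\mid\mathcal F_{k-1})=0$, and a residue computation gives $\frac1{2\pi}\int_0^{2\pi}\frac{d\theta}{e^{-i\theta}w-1}=0$ for $|w|>1$, so $\mathbb E(\mathcal X_{k,n}(z)\mid\mathcal F_{k-1})=0$ and hence $\mathbb E(Y_{k,n}(z)\mid\mathcal F_{k-1})=0$. From the bound of the previous paragraph, $M_n:=\sup_{k\le n}\sup_{|z|>r}|Y_{k,n}(z)|\le\lambda(\alpha,c,r)\,n^{\frac12-\min(\frac32,\frac1\alpha)}$ and $b_n:=\sum_{k=1}^n\mathbb E(|Y_{k,n}(z)|^2\mid\mathcal F_{k-1})\le\lambda(\alpha,c,r)\,n\sum_{k=1}^n(c_k^{*})^{3}e^{-2C_{k+1,n}^{*}}\le\lambda(\alpha,c,r)\,n^{-\beta}$ for some $\beta=\beta(\alpha)>0$ whenever $0<\alpha<2$; both exponents degrade as $\alpha\uparrow2$, consistent with the breakdown of the theorem there. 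Splitting into real and imaginary parts and applying Freedman's inequality (Theorem \ref{freedman}) at a fixed $z$, $\mathbb P\big(|\sum_{k=1}^n Y_{k,n}(z)|>\frac\epsilon2\big)\le 4\exp(-c_\epsilon n^{\beta'})$ for some $\beta'=\beta'(\alpha)>0$ and all large $n$.

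Finally I would run the net argument of Lemmas \ref{5.2} and \ref{5.3}. With the same $L_{r,n}=\gamma(\alpha,c,r)n^{3/2}$ equally spaced points $z_i$ on $\{|z|=r\}$, a union bound gives the summable tail $\mathbb P\big(\exists i:|\sum_k Y_{k,n}(z_i)|>\frac\epsilon2\big)\le 4L_{r,n}\exp(-c_\epsilon n^{\beta'})$. For $z$ on the arc between consecutive $z_i$, holomorphy of each $Y_{k,n}$ near $\{|z|=r\}$ and the $z$-derivative bound above give, deterministically, $|\sum_k(Y_{k,n}(z)-Y_{k,n}(z_i))|\le\frac{2\pi r}{L_{r,n}}\sum_k\sup_{|w|=r}|Y_{k,n}'(w)|\le\lambda(\alpha,c,r)\frac{r}{L_{r,n}}\to0$, since $\sqrt n\sum_k(c_k^{*})^{3/2}e^{-C_{k+1,n}^{*}}=O(1)$ while $L_{r,n}\to\infty$. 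Combining these and passing from $\{|z|=r\}$ to $\{|z|>r\}$ by the maximum modulus principle, $\mathbb P\big(\sup_{|z|>r}|\sum_k Y_{k,n}(z)|>\epsilon\big)$ is summable in $n$, so by Borel--Cantelli $\sup_{|z|>r}|\sum_k Y_{k,n}(z)|\to0$ almost surely, which gives the claim. The main obstacle is the first step: the decomposition must be carried out carefully enough that the remainder $Y_{k,n}$ carries the extra factor $(c_k^{*})^{1/2}e^{-C_{k+1,n}^{*}}$ — $\sqrt n\sum_k|Y_{k,n}(z)|$ is only $O(1)$ pathwise, so it is precisely the martingale cancellation $\mathbb E(Y_{k,n}\mid\mathcal F_{k-1})=0$ that forces the sum to vanish.
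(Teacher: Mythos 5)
There is a genuine gap in your first step, precisely in the place you flag as the main obstacle. Your decomposition and the terms $\sqrt n\,\delta_{c_k^*}z$ and $\sqrt n\,A_{k,n}z\,R_2$ do satisfy the claimed bound $\lambda(\alpha,c,r)\sqrt n\,(c_k^*)^{3/2}e^{-C_{k+1,n}^*}$, but the third term $\sqrt n\,e^{-C_{k,n}^*}A_{k,n}R_1$ does not. The area/distortion theorem gives $|\psi_{k-1}'(w)-1|\le(|w|^2-1)^{-1}$, and since $|\eta_{k,n}(s,z)|\gtrsim e^{C_{k,n}^*}|z|$ this yields exactly your $|R_1|\le\lambda(r)\,e^{-C_{k,n}^*}$; however this does not decay as $k\uparrow n$, since $C_{k,n}^*\to 0$. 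Plugging in, the third term is only bounded by $\lambda\sqrt n\,c_k^*\,e^{-3C_{k+1,n}^*}\asymp n^{1/2-3/\alpha}k^{3/\alpha-1}$, whose ratio to your claimed bound $n^{1/2-1/\alpha}k^{1/\alpha-3/2}$ is $(k/n)^{2/\alpha}k^{1/2}$, which is $\asymp n^{1/2}$ when $k\asymp n$. Concretely, for $k\in[n/2,n]$ this term is of size $\asymp n^{-1/2}$ and there are $\asymp n$ of them, so $b_n=\sum_k\mathbb E(|Y_{k,n}|^2\mid\mathcal F_{k-1})$ stays bounded away from $0$ rather than tending to $0$; Freedman's inequality then only gives $\mathbb P(|\sum_k Y_{k,n}|>\epsilon/2)\le \exp(-c_\epsilon)$, a constant, and the Borel--Cantelli step collapses.

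The paper's proof is careful at exactly this point and is not purely deterministic: it conditions on the probability-one event $\omega(r')$ furnished by Theorem \ref{a}, writes $g=\psi_{k-1}-\mathrm{id}$, and uses Cauchy's formula on the annulus $|\zeta-z|<|z|-r'$ together with $\sup_{|z|\ge r'}|g(z)|\le\frac{\log(k-1)}{\sqrt{k-1}}$ (valid for $k$ large on $\omega(r')$) to obtain $|\psi_{k-1}'(\eta)-1|\le\frac{1}{|\eta|-r'}\frac{\log(k-1)}{\sqrt{k-1}}$. The crucial point is that $\frac{\log(k-1)}{\sqrt{k-1}}\to 0$ as $k\to n$, unlike your $e^{-C_{k,n}^*}\to 1$; only with this input does the bound $|Y_{k,n}|\lesssim\log(n)\,n^{1/2-1/\alpha}k^{1/\alpha-3/2}$ (and hence $b_n\to 0$) go through. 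So the overall Freedman-plus-net scheme and the rest of your plan are sound, but the identification of $\mathcal X_{k,n}$ as the leading part of $\sqrt n\,X_{k,n}$ cannot be done with univalence alone; you need to feed back the already-proved a.s.\ convergence of $\psi_{k-1}$ to the identity as the paper does.
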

\begin{proof}
Fix some $r>1$. Then in Theorem \ref{a} we showed that for $|z|>r$, 
$$\mathbb{P}\left(\limsup_{n\to \infty}\left\lbrace\sup_{|z|\geq r}|e^{-C_{1,n}^*}\phi_n(z)-z|>\frac{\log n}{\sqrt{n}}\right\rbrace\right)=0.$$
Denote the event,  $$\omega(r)=\left\lbrace \limsup_{n\to \infty}\left\lbrace\sup_{|z|\geq r}|e^{-C_{1,n}^*}\phi_n(z)-z|\leq\frac{\log n}{\sqrt{n}}\right\rbrace\right\rbrace.$$ Now  choose $r'=\frac{r+1}{2}$, then,
\begin{align*}
\mathbb{P}\left(\lim_{n\to\infty}\sup_{|z|>r}\left|\sum_{k=1}^n Y_{k,n}(z)\right| <\epsilon\right)&=\mathbb{P}\left(\lim_{n\to\infty}\sup_{|z|>r}\left|\sum_{k=1}^n Y_{k,n}(z)\right| <\epsilon \; \middle| \;\omega(r')\right)\mathbb{P}\left(\omega(r')\right)\\&+\mathbb{P}\left(\lim_{n\to\infty}\sup_{|z|>r}\left|\sum_{k=1}^n Y_{k,n}(z)\right| <\epsilon \; \middle| \; \omega(r')^{c}\right)\mathbb{P}\left(\omega(r')^c\right).
\end{align*}
We have shown that $\mathbb{P}\left(\omega(r')\right)=1$. Therefore,
\begin{align*}
\mathbb{P}\left(\lim_{n\to\infty}\sup_{|z|>r}\left|\sum_{k=1}^n Y_{k,n}(z)\right| <\epsilon\right)=\mathbb{P}\left(\lim_{n\to\infty}\sup_{|z|>r}\left|\sum_{k=1}^n Y_{k,n}(z)\right| <\epsilon \;\middle| \; \omega(r')\right).
\end{align*}
We first calculate a bound on $|Y_{k,n}(z)|$. Let \begin{align*}
\widetilde{X}_{k,n}(z)=\sqrt{n}e^{-C_{k,n}^{*}}\int_0^1\dot{\eta}_{k,n}(s,z)ds
\end{align*} 
where $\eta_{k,n}(s,z)$ is defined as in Section 3. Then, \begin{align*}
\sqrt{n} X_{k,n}(z)-\widetilde{X}_{k,n}(z)=\sqrt{n}e^{-C_{1,n}^{*}}\left(\int_0^1 \dot{\eta}_{k,n}(s,z)\left(\phi_{k-1}(\eta_{k,n}(s,z))-e^{C_{1,k-1}^{*}}\right)ds\right) .
\end{align*}
But for $|z|>r'$ on the event $\omega(r')$,
$$|e^{-C_{1,k-1}^*} \phi_n(z)-z|<\frac{\log(k-1)}{\sqrt{k-1}}.$$
Then let $g(z)=e^{-C_{1,k-1}^*} \phi_n(z)-z$. The  map $g$ is holomorphic on the closed disc $|\zeta -z|<R:=|z|-r'.$
So by Cauchy's theorem, for $0<\alpha<2$, 
$$g'(z)=\frac{1}{2\pi i} \int_{C_R} \frac{g(\zeta)}{(\zeta-z)^2} d\zeta$$
where $C_R$ is the circle of radius $R$ centred at $z$. Therefore,
$$|g'(z)|\leq \frac{1}{(|z|-r')}\frac{\log(k-1)}{\sqrt{k-1}}.$$So on $\omega(r')$, 
\begin{align*}
|X_{k,n}(z)-\widetilde{X}_{k,n}(z)|\leq \sqrt{n}e^{-C_{1,n}^{*}}\left(\int_0^1 \dot{\eta}_{k,n}(s,z)\left(\frac{1}{(|\eta_{k,n}(s,z)|-r')}\frac{e^{C_{1,k-1}^{*}}\log(k-1)}{\sqrt{k-1}} \right)ds\right) .
\end{align*}
Then since, $\inf_{0\leq k\leq n}|\eta_{k,n}(s,z)|\geq | z|,$
\begin{align*}
|X_{k,n}(z)-\widetilde{X}_{k,n}(z)|&\leq \sqrt{n}\frac{1}{ r -r'}e^{-C_{k,n}^{*}} \frac{\log(k-1)}{\sqrt{k-1}}\int_0^1 |\dot{\eta}_{k,n}(s,z)|ds\\
&\leq  \lambda(\alpha,c,r)\sqrt{n}e^{-C_{k,n}^{*}} \frac{\log(k-1)}{\sqrt{k-1}}\frac{c_k^* e^{C_{k,n}^{*}} }{ e^{C_{k+1,n}^{*}}r -1}\\
&\leq  \lambda(\alpha,c,r) \frac{\sqrt{n}}{n^{\frac{1}{\alpha}}} \frac{\log(k)k^{\frac{1}{\alpha}}}{k^{\frac{3}{2}}}.
\end{align*}
Where the second inequality follows from Lemma \ref{3.6}.
Now consider,
\begin{align*}
&|\widetilde{X}_{k,n}(z)-\mathcal{X}_{k,n}(z)|\\
&\leq \sqrt{n}\left|\left(\frac{2c_k^{*}}{e^{-i\theta_k}e^{C_{k+1,n}^{*}}z-1}\right)\left(e^{-C_{k,n}^{*}}\int_{0}^1\eta_{k,n}(s,z)ds-z\right)\right|\\
&+\sqrt{n}\left|\left(e^{-C_{k,n}^{*}}\int_{0}^1\eta_{k,n}(s,z)ds\right)\delta_{c_k^{*}} \left(e^{-i\theta_k}e^{C_{k+1,n}^{*}}z\right)\right|\\
&\leq \sqrt{n}\left(\left(\frac{2c_k^{*}}{e^{C_{k+1,n}^{*}}r-1}\right)\left(r\int_{0}^1\left|e^{x_{k,n}(s)}-1\right|ds\right)+\lambda(\alpha,c,r)
\left|\delta_{c_k^{*}} \left(e^{-i\theta_k}e^{C_{k+1,n}^{*}}z\right)\right|\right)
\end{align*}
where $\lambda(\alpha,c,r)$ is some positive constant that we will vary and $$x_{k,n}(s)=s\left(\frac{2c_k^{*}}{e^{-i\theta_k}e^{C_{k+1,n}^{*}}z-1}+\delta_{c_k^{*}} \left(e^{-i\theta_k}e^{C_{k+1,n}^{*}}z\right)\right).$$ Furthermore,
$$|e^{x_{k,n}(s)}-1|\leq \lambda(\alpha,c,r)|x_{k,n}(s)|\leq \lambda(\alpha,c,r)k^{\frac{1}{\alpha}-1}n^{\frac{-1}{\alpha}}$$
where the second inequality follows from Lemmas \ref{lemma2.2} and \ref{deltabound} and Corollary \ref{3.3}. Hence by using the bound on $\delta_c$ from Lemma \ref{deltabound} we see that,
\begin{align*}
|\widetilde{X}_{k,n}(z)-\mathcal{X}_{k,n}(z)|&\leq \lambda(\alpha,c,r')\sqrt{n}\left(\left(k^{\frac{1}{\alpha}-1}n^{\frac{-1}{\alpha}}\right)^2+k^{\frac{1}{\alpha}-\frac{3}{2}}n^{\frac{-1}{\alpha}}\right).
\intertext{Since $k^{\frac{1}{\alpha}}\leq n^{\frac{1}{\alpha}}$ we have}
|\widetilde{X}_{k,n}(z)-\mathcal{X}_{k,n}(z)|&\leq \lambda(\alpha,c,r)k^{\frac{1}{\alpha}-\frac{3}{2}}n^{\frac{1}{2}-\frac{1}{\alpha}}.
\end{align*}
Therefore, $$|Y_{k,n}(z)|\leq \lambda(\alpha,c,r) \log(n) n^{\frac{1}{2}-\frac{1}{\alpha}}k^{\frac{1}{\alpha}-\frac{3}{2}}.$$
Then we split into cases, if $0<\alpha<\frac{2}{3}$,
\begin{align*}
\sup_{k\leq n}|Y_{k,n}(z)|&\leq \lambda(\alpha,c,r) \frac{\log(n)}{n} \to 0
\intertext{as $n\to \infty$. However, if $\frac{2}{3}<\alpha <2$ then}
\sup_{k\leq n}|Y_{k,n}(z)|&\leq \lambda(\alpha,c,r) \log(n)n^{\frac{1}{2}-\frac{1}{\alpha}} \to 0
\intertext{as $n\to \infty$. Moreover,}
\mathbb{E}(Y_{k,n}(z)|^2 \;| \mathcal{F}_{k-1})&\leq \lambda(\alpha,c,r)\frac{n}{n^{\frac{2}{\alpha}}} \frac{\log(n)^2k^{\frac{2}{\alpha}}}{k^{3}}.
\intertext{thus if $0<\alpha<1$,}
\sum_{k=1}^n \mathbb{E}(|Y_{k,n}(z)|^2 \;| \mathcal{F}_{k-1})&\leq \lambda(\alpha,c,r)\frac{\log(n)^3}{n}\to 0
\intertext{as $n\to \infty$. If $1<\alpha<2$,}
\sum_{k=1}^n \mathbb{E}(|Y_{k,n}(z)|^2 \;| \mathcal{F}_{k-1})&\leq \lambda(\alpha,c,r)\frac{\log(n)^2 n}{n^{\frac{2}{\alpha}}} \to 0
\end{align*}
as $n\to \infty$. Therefore, since $Y_{k,n}(z)$ is also a martingale difference array we can use these bounds to apply the same methods to the difference $Y_{k,n}(z)$ as we did to $X_{k,n}(z)$ in Sections 4 and 5 along with a Borel Cantelli argument to show that
$$\mathbb{P}\left(\lim_{n\to\infty}\sup_{|z|>r}\sum_{k=1}^n |Y_{k,n}(z)| >\epsilon\right)=0.$$

\end{proof}

\subsection{Laurent coefficients}
In the previous section we showed that we could discard the lower order terms of $X_{k,n}(z)$. We now wish to calculate the Laurent coefficients of the remaining higher order terms $\mathcal{X}_{k,n}(z)$ and hence evaluate the fluctuations of the cluster. We first notice that $$\mathbb{E}(\mathcal{X}_{k,n}(z)|\mathcal{F}_{k-1})=0$$ and therefore $\mathcal{X}_{k,n}(z)$ is also a martingale difference array. We aim to use the following result of Mcleish \cite{mcleish1974dependent} to produce a central limit theorem. 
\begin{theorem}[McLeish]\label{6.2}
Let $(X_{k,n})_{1\leq k \leq n}$ be a martingale difference array with respect to the filtration $\mathcal{F}_{k,n}=\sigma(X_{1,n}, X_{2,n},...,X_{k,n}). $ Let $M_{n}=\sum_{i=1}^{n}X_{i,n}$ and assume that;
\begin{enumerate}[\normalfont(I)]
\item for all $\rho>0$, $\sum_{k=1}^n X_{k,n}^2  $ $\mathbb{1}(|X_{k,n}|>\rho)\to 0$ in probability as $n\to \infty$.
\item $\sum_{k=1}^n X_{k,n}^2 \to s^2$ in probability as $n\to\infty$ for some $s^2>0$.
\end{enumerate}
Then $M_{n}$ converges in distribution to $\mathcal{N}(0, s^2).$
\end{theorem}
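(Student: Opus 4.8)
The plan is to prove convergence of the characteristic functions, $\varphi_n(t):=\mathbb{E}\big[e^{itM_n}\big]\to e^{-s^2t^2/2}$ for every $t\in\mathbb{R}$, and then conclude by Lévy's continuity theorem. The algebraic engine is McLeish's factorisation of the complex exponential: for real $x$,
\[
e^{ix}=(1+ix)\exp\!\Big(-\tfrac{x^2}{2}+r(x)\Big),\qquad r(x):=ix+\tfrac{x^2}{2}-\log(1+ix),
\]
with $\log$ the principal branch, which is legitimate since $1+ix$ avoids $(-\infty,0]$. One checks that $r(0)=0$, that $|r(x)|\le|x|^3$ for $|x|\le\tfrac12$, and — the key cancellation — that $\tfrac{x^2}{2}-\Re r(x)=\tfrac12\log(1+x^2)\ge0$, so that $|1+ix|\,e^{-x^2/2+\Re r(x)}=1$ identically.

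Applying this with $x=tX_{k,n}$ and multiplying over $1\le k\le n$ gives $e^{itM_n}=T_n(t)\exp\!\big(-\tfrac{t^2}{2}U_n+R_n\big)$, where $T_n(t):=\prod_{k=1}^n(1+itX_{k,n})$, $U_n:=\sum_{k=1}^nX_{k,n}^2$ and $R_n:=\sum_{k=1}^n r(tX_{k,n})$. Condition (I) forces $\max_{k\le n}|X_{k,n}|\to0$ in probability, since $\sum_kX_{k,n}^2\mathbb{1}(|X_{k,n}|>\rho)>\rho^2$ on $\{\max_k|X_{k,n}|>\rho\}$; combined with condition (II) (which makes $U_n$ tight) this yields $R_n\to0$ in probability, because on $\{\max_k|tX_{k,n}|\le\tfrac12\}$ one has $|R_n|\le\sum_k|tX_{k,n}|^3\le\big(\max_k|tX_{k,n}|\big)\,t^2U_n$. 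Hence, using (II) once more, $\exp\!\big(-\tfrac{t^2}{2}U_n+R_n\big)\to e^{-t^2s^2/2}$ in probability, while the first-paragraph identity gives $|T_n(t)|\,e^{-t^2U_n/2+\Re R_n}=1$, so in particular $\big|e^{-t^2U_n/2+R_n}\big|\le1$ for every $n$. Writing
\[
\varphi_n(t)=e^{-t^2s^2/2}\,\mathbb{E}\big[T_n(t)\big]+\mathbb{E}\Big[T_n(t)\big(e^{-t^2U_n/2+R_n}-e^{-t^2s^2/2}\big)\Big],
\]
it remains to prove (a) $\mathbb{E}\big[T_n(t)\big]\to1$ and (b) that $\{T_n(t)\}_n$ is uniformly integrable: granted these, the bracket in the second term is bounded by $2$ and tends to $0$ in probability, so the second term vanishes and $\varphi_n(t)\to e^{-t^2s^2/2}$.

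The integrability needed for (a) and (b) is the main obstacle. Formally, conditioning successively on $\mathcal{F}_{k-1,n}$ and using $\mathbb{E}[1+itX_{k,n}\mid\mathcal{F}_{k-1,n}]=1$ shows $\mathbb{E}[T_n(t)]=1$ exactly; but this manipulation, and the uniform integrability in (b), are valid only if $T_n(t)$ is integrable, whereas the sole a priori bound is $|T_n(t)|=\exp\!\big(\tfrac12\sum_k\log(1+t^2X_{k,n}^2)\big)\le e^{t^2U_n/2}$ with $U_n$ merely tight, not a.s. bounded. The remedy is a standard truncation: fix $L>s^2$ and $\rho\in(0,1]$, stop the array at the first index $k$ with $\sum_{j<k}X_{j,n}^2>L$, truncate each surviving increment to $X_{k,n}\mathbb{1}(|X_{k,n}|\le\rho)$, and re-centre by subtracting the relevant $\mathcal{F}_{k-1,n}$-conditional mean, producing a martingale difference array $\widehat X_{k,n}$. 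Its quadratic variation is at most $L+\rho^2$ plus the squared re-centring corrections, and each correction equals, up to an $\mathcal{F}_{k-1,n}$-measurable indicator, $-\mathbb{E}[X_{k,n}\mathbb{1}(|X_{k,n}|>\rho)\mid\mathcal{F}_{k-1,n}]$, whose magnitude is dominated by $\rho^{-1}\mathbb{E}[X_{k,n}^2\mathbb{1}(|X_{k,n}|>\rho)\mid\mathcal{F}_{k-1,n}]$ and hence controlled by (the conditional form of) condition (I); thus $\widehat T_n(t)$ is bounded by a deterministic constant, $\mathbb{E}[\widehat T_n(t)]=1$ holds rigorously by the tower property, $\{\widehat T_n(t)\}$ is trivially uniformly integrable, and (I)–(II) are inherited by $\widehat X_{k,n}$. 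Since the modification changes the sample point only on $\{U_n>L\}\cup\{\max_k|X_{k,n}|>\rho\}$, whose probability tends to $0$ as $n\to\infty$ and then to $0$ as $L\to\infty$, $\rho\to0$, the truncated and original $M_n$ have the same weak limit. Running the argument of the previous paragraph on $\widehat X_{k,n}$ therefore gives $\varphi_n(t)\to e^{-t^2s^2/2}$ for all $t$, and Lévy's continuity theorem yields $M_n\Rightarrow\mathcal{N}(0,s^2)$. The genuinely delicate point, worth isolating, is exactly this truncation/re-centring step: without the boundedness it supplies, neither $\mathbb{E}[T_n(t)]\to1$ nor the uniform integrability in (b) can be extracted from the "in probability" hypotheses alone.
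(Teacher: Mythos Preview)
The paper does not supply a proof of this theorem: it is quoted as a black box from McLeish's 1974 article, with only the remark that condition~(I) here packages two of McLeish's original hypotheses via the Lindeberg condition. Your proposal therefore provides what the paper deliberately omits, and it follows precisely McLeish's own route --- the factorisation $e^{ix}=(1+ix)\exp(-x^2/2+r(x))$, the product $T_n(t)=\prod_k(1+itX_{k,n})$, the identity $|T_n(t)|\,e^{-t^2U_n/2+\Re R_n}=1$, and the split of $\varphi_n(t)$ into $e^{-t^2s^2/2}\,\mathbb{E}[T_n(t)]$ plus a remainder handled by uniform integrability of $T_n(t)$. Your identification of the integrability obstacle and the truncation/stopping/re-centring manoeuvre are exactly the technical points McLeish addresses in his paper.

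One small caution on the truncation step: you invoke ``the conditional form of condition~(I)'' to control the squared re-centring corrections, but hypothesis~(I) as stated in the theorem is only an \emph{unconditional} convergence in probability, not the conditional Lindeberg condition $\sum_k\mathbb{E}[X_{k,n}^2\mathbb{1}(|X_{k,n}|>\rho)\mid\mathcal{F}_{k-1,n}]\to_p0$. The clean fix is not to seek a deterministic bound on $\sum_k\widehat X_{k,n}^2$ at all: the stopping time alone already gives $|\widehat T_n(t)|\le\exp\big(\tfrac{t^2}{2}(L+\rho^2)\big)$ on the good event $\{\max_k|X_{k,n}|\le\rho\}$, and on its complement one simply sets $\widehat X_{k,n}=0$ for all $k$. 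Then $\widehat M_n=M_n$ except on an event of vanishing probability, so the two share any weak limit, and no appeal to a conditional version of~(I) is needed. This is a bookkeeping refinement rather than a gap in strategy; the architecture of your argument is correct and is the same as McLeish's.
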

Note that condition (I) in Theorem \ref{6.2} combines two conditions in \cite{mcleish1974dependent} as a result of the Lindberg condition \cite{lindeberg1922neue}. Theorem \ref{6.2} only applies to real valued random variables and as such we will split $\mathcal{X}_{k,n}(z)$ into real and imaginary parts. We start by calculating the Laurent coefficients.
$$\mathcal{X}_{k,n}(z)=\frac{2c_k^* \sqrt{n}}{e^{-i \theta_k}e^{C_{k+1,n}^{*}}}\left(\frac{1}{1-\frac{1}{e^{-i \theta_k}e^{C_{k+1,n}^{*}}z}}\right).$$
We can choose $|z|>r$ such that $\left|\frac{1}{ e^{-i \theta_k}e^{C_{k+1,n}^{*}}z}\right|<1$, then 
\begin{align*}
\mathcal{X}_{k,n}(z)&=\sum_{m=0}^{\infty}\frac{2c_k^*\sqrt{n}}{( e^{-i \theta_k}e^{C_{k+1,n}^{*}})^{m+1}}\frac{1}{z^m}.
\intertext{So the $m^{\text{th}}$ coefficient is dependent on $n$ and $k$ and we can rewrite $\mathcal{X}_{k, n} $ as}
\mathcal{X}_{k,n}(z)&=\sum_{m=0}^{\infty} a_{k,n}(m)\frac{1}{z^m}
\end{align*}
where $a_{k,n}(m)=\frac{2c_k^* \sqrt{n}}{(e^{C_{k+1,n}^{*}})^{m+1}}e^{i \theta_k(m+1)}.$ So we can calculate real and imaginary parts of these coefficients,
$$\Re(a_{k,n}(m))=\frac{2c_k^*\sqrt{n}}{( e^{C_{k+1,n}^{*}})^{m+1}} \cos(\theta_{k}(m+1)),$$
$$\Im(a_{k,n}(m))=\frac{2c_k^*\sqrt{n}}{( e^{C_{k+1,n}^{*}})^{m+1}} \sin(\theta_{k}(m+1)).$$
In order to use Theorem \ref{6.2} we need to calculate the second moments of the coefficients. We will just consider the case of the real coefficients here but the imaginary coefficients give the same results. Thus, we calculate,
\begin{align*}
\mathbb{E}((\Re(a_{k,n}(m)))^2 | \mathcal{F}_{k-1})&=\frac{4(c_k^*)^2 n }{(e^{C_{k+1,n}^{*}})^{2(m+1)}}\frac{1}{2\pi}\int_{0}^{2\pi} \cos^2(\theta(m+1))d \theta\\
&=\frac{2(c_k^*)^2 n }{( e^{C_{k+1,n}^{*}})^{2(m+1)}}.
\end{align*}
It is clear to see here why we have the same expected value of the imaginary coefficients. So now we can take the sum over $n$,
$$\lim_{n\to \infty} \sum_{k=1}^n \mathbb{E}((\Re(a_{k,n}(m)))^2 | \mathcal{F}_{k-1})=\lim_{n\to \infty}\left(2 n  \sum_{k=1}^n \frac{(c_k^*)^2}{\left(e^{C_{k+1,n}^{*}}\right)^{2(m+1)}}\right).$$
Recall that $c_k^*=\frac{c}{1+\alpha c(k-1)}$ and we have shown we can approximate the term in the \newline denominator in the following way;
$$e^{C_{k+1,n}^{*}}=\left(\frac{1+\alpha c n}{1+\alpha ck}\right)^{\frac{1+\epsilon_{k+1,n}}{\alpha}}$$
where $\epsilon_{k+1,n}$ is the error defined in Lemma \ref{lemma2.2}. Therefore, we can write
$$\lim_{n\to \infty} \sum_{k=1}^n \mathbb{E}((\Re(a_{k,n}(m)))^2 | \mathcal{F}_{k-1})=\lim_{n\to \infty}\left(2 n c^2  \sum_{k=1}^n \frac{\left(1+\alpha ck\right)^{\left(\frac{(1+\epsilon_{k+1,n})(2(m+1))}{\alpha}\right)-2}}{\left(1+\alpha cn\right)^{\left(\frac{(1+\epsilon_{k+1,n})(2(m+1))}{\alpha}\right)}}\right)$$
We know $\epsilon_{k+1,n}\to 0$ so our aim is to show that this term in the sum is insignificant. We define the function $h:\mathbb{R} \to \mathbb{R}$ as the term inside the sum;
$$ h(x):= \frac{\left(1+\alpha ck\right)^{\left(\frac{(1+x)(2(m+1))}{\alpha}\right)-2}}{\left(1+\alpha cn\right)^{\left(\frac{(1+x)(2(m+1))}{\alpha}\right)}}.$$
Our aim is to show, $$\left| \lim_{n\to \infty}2 n c^2  \sum_{k=1}^n \left(h(\epsilon_{k+1,n})-h(0)\right)\right|=0.$$
If we can show this then we can just ignore the $\epsilon_{k,n}$ and find the limit,
$$\lim_{n\to \infty}2 n c^2 \sum_{k=1}^n h(0)$$
which we will show converges to a real number.  We provide this in the form of the following lemma.
\begin{lemma}
With $h: \mathbb{R} \to \mathbb{R}$ defined as above we have $$\left| \lim_{n\to \infty}2 n c^2 \sum_{k=1}^n \left(h(\epsilon_{k+1,n})-h(0)\right)\right|=0$$
\end{lemma}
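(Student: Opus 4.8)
The plan is to bound the absolute value of the difference sum by replacing the limit of a sum with the supremum of the individual discrepancies times the number of terms, exploiting the uniform smallness of $\epsilon_{k+1,n}$ from Lemma \ref{lemma2.2}. First I would write, for each fixed $k$ and $n$, $h(\epsilon_{k+1,n})-h(0) = \epsilon_{k+1,n}\, h'(\xi_{k,n})$ for some $\xi_{k,n}$ between $0$ and $\epsilon_{k+1,n}$, by the mean value theorem. Since $0 \le \epsilon_{k+1,n} \le \frac{\alpha c}{\log(1+\alpha c n)}$, it suffices to get a uniform (in $k \le n$, and polynomial in $n$) bound on $|h'(\xi)|$ for $\xi$ in that shrinking interval, so that $2nc^2 \sum_{k=1}^n |\epsilon_{k+1,n} h'(\xi_{k,n})|$ is controlled by $2nc^2 \cdot n \cdot \frac{\alpha c}{\log(1+\alpha c n)} \cdot \sup |h'|$, and then show the whole thing still tends to $0$.

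The key computation is the derivative of $h$ in its argument $x$. Writing $h(x) = (1+\alpha c k)^{-2} \exp\!\big(\tfrac{2(m+1)(1+x)}{\alpha}(\log(1+\alpha c k) - \log(1+\alpha c n))\big)$, we get $h'(x) = \tfrac{2(m+1)}{\alpha}\big(\log(1+\alpha c k)-\log(1+\alpha c n)\big) h(x)$. The logarithmic factor is at most $\log(1+\alpha c n)$ in absolute value, which exactly cancels the $\log(1+\alpha c n)$ in the denominator of the bound on $\epsilon_{k+1,n}$. For $h(\xi)$ itself with $\xi$ in the small interval, Corollary \ref{3.3} shows $(1+\alpha c k)^{\xi}$ and $(1+\alpha c n)^{\xi}$ are bounded by constants depending only on $\alpha, c$, so $h(\xi) \le \lambda(\alpha,c)\, h(0) = \lambda(\alpha,c) \frac{(1+\alpha c k)^{\frac{2(m+1)}{\alpha}-2}}{(1+\alpha c n)^{\frac{2(m+1)}{\alpha}}}$. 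Summing $2nc^2 h(0)$ over $k$ and using the integral comparison $x = \frac{1+\alpha c k}{1+\alpha c n}$ as in the proof of Lemma \ref{boundt}, one sees $2nc^2 \sum_k h(0)$ converges to a finite constant (the integrand $x^{\frac{2(m+1)}{\alpha}-2}$ is integrable near $0$ since $m \ge 0$ and $\alpha < 2$, indeed $\frac{2(m+1)}{\alpha}-2 > -1$). Hence $2nc^2 \sum_k |h(\xi_{k,n}) - h(0)| \le \lambda(\alpha,c) \cdot \tfrac{2(m+1)}{\alpha}\cdot \alpha c \cdot \big(2nc^2 \sum_k h(0)\big) \cdot \tfrac{1}{\text{(something)}}$ — more carefully, the extra factor beyond the convergent sum is $\alpha c \cdot \tfrac{2(m+1)}{\alpha} \cdot \frac{\log(1+\alpha c k) - \log(1+\alpha c n)}{\log(1+\alpha c n)}$, which is bounded by a constant uniformly in $k \le n$, so I would instead extract one more decaying factor.

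Here the main obstacle is making sure a genuine $o(1)$ factor survives rather than just a bounded one. The cleanest route: bound $|\epsilon_{k+1,n}| \le \frac{\alpha c}{\log(1+\alpha c n)}$ and $\big|\log\tfrac{1+\alpha c k}{1+\alpha c n}\big| \le \log(1+\alpha c n)$, so their product is at most $\alpha c$, a constant — not enough on its own. But we also have the sharper bound on $\epsilon_{k+1,n}$ from Lemma \ref{lemma2.2}, namely $\epsilon_{k+1,n} \le \frac{\alpha^2 c^2 (n-k)}{(1+\alpha c k)(1+\alpha c n)\log((1+\alpha c n)/(1+\alpha c k))}$, and crucially $|h'(\xi_{k,n})|$ carries exactly the factor $\log((1+\alpha c n)/(1+\alpha c k))$, which cancels the logarithm in this denominator. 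Thus $|\epsilon_{k+1,n} h'(\xi_{k,n})| \le \lambda(\alpha,c,m)\frac{\alpha^2 c^2(n-k)}{(1+\alpha ck)(1+\alpha cn)} h(0)$, and $\frac{n-k}{(1+\alpha c k)(1+\alpha c n)} \le \frac{1}{\alpha c(1+\alpha c n)} \cdot \frac{1+\alpha c n - (1+\alpha c k)}{1+\alpha c k} \le \frac{1}{\alpha c (1+\alpha c k)}$, giving an extra $\frac{1}{1+\alpha c k}$ decay inside the sum relative to $h(0)$. Then $2nc^2 \sum_{k=1}^n \frac{h(0)}{1+\alpha c k}$, via the same substitution $x = \frac{1+\alpha c k}{1+\alpha c n}$, behaves like $\frac{1}{1+\alpha c n}\int_{(1+\alpha c n)^{-1}}^1 x^{\frac{2(m+1)}{\alpha}-2}\,\frac{dx}{x \cdot (1+\alpha cn)} $ — i.e. picks up an extra $\frac{1}{1+\alpha c n} = O(1/n)$ factor, hence the whole expression is $O(1/n) \to 0$. (When $\frac{2(m+1)}{\alpha} - 2 < 0$ one checks the integral of $x^{\frac{2(m+1)}{\alpha}-3}$ contributes at most a bounded power of $n$ that is still dominated; for $m=0$ and $\alpha$ near $2$ one verifies the exponent stays $> -1$ after the adjustment, or splits the sum at $k = n/2$.) Assembling these estimates gives $\big|\lim_{n\to\infty} 2nc^2\sum_{k=1}^n (h(\epsilon_{k+1,n}) - h(0))\big| = 0$, as claimed.
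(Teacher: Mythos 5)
Your proposal is correct and follows essentially the same path as the paper: the paper applies the elementary inequality $|e^x - 1|\le|x|$ (valid since $\epsilon_{k+1,n}\log y_{k,n}\le 0$) where you invoke the mean value theorem, but these are the same estimate, and both arguments turn on the identical key cancellation between the $\log\!\big(\tfrac{1+\alpha cn}{1+\alpha ck}\big)$ in the denominator of the sharper bound on $\epsilon_{k+1,n}$ from Lemma \ref{lemma2.2} and the same logarithm appearing in $h'$, followed by the same Riemann-integral comparison (the paper splitting into the cases $\tfrac{2m+2}{\alpha}-3=-1$ and $\ne -1$, which you gesture at). One small inaccuracy: your parenthetical that for $m=0$ and $\alpha$ near $2$ ``the exponent stays $>-1$'' is not true, but this does not affect the conclusion since, as you also note, the extra $O(1/n)$ prefactor dominates the resulting bounded power of $n$ (and the paper's explicit two-case computation confirms this).
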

\begin{proof}
Consider $$|h(\epsilon_{k+1,n})-h(0)|=\left| \frac{\left(1+\alpha ck\right)^{\left(\frac{(1+\epsilon_{k+1,n})(2(m+1))}{\alpha}\right)-2}}{\left(1+\alpha cn\right)^{\left(\frac{(1+\epsilon_{k+1,n})(2(m+1))}{\alpha}\right)}}-\frac{\left(1+\alpha ck \right)^{\left(\frac{(2(m+1))}{\alpha}\right)-2}}{\left(1+\alpha cn\right)^{\left(\frac{(2(m+1))}{\alpha}\right)}}\right|.$$
Then let $y_{k,n}=\left(\frac{1+\alpha c k}{1+\alpha c n }\right)^{\frac{2m+2}{\alpha}}$, thus we can write
$$|h(\epsilon_{k+1,n})-h(0)|=\frac{1}{(1+\alpha c k)^2}|y_{k,n}|\left| y_{k,n}^{\epsilon_{k+1,n}}-1\right|.$$
Furthermore, since $\log(y_{k,n})<1$, 
$$\left| y_{k,n}^{\epsilon_{k+1,n}}-1\right|=\left| e^{\epsilon_{k+1,n} \log y_{k,n}}-1\right|\leq | \epsilon_{k+1,n}||\log y_{k,n}|.$$
So using the first bound on $\epsilon_{k,n}$ from Lemma \ref{lemma2.2} we have,
\begin{align*}
|h(\epsilon_{k+1,n})-h(0)|&\leq \frac{1}{(1+\alpha c k)^2}\left(\frac{1+\alpha c k}{1+\alpha c n }\right)^{\frac{2m+2}{\alpha}}   \frac{\alpha \left(\alpha c^2 (n-k)\right)\left|\log\left(\left(\frac{1+\alpha c k}{1+\alpha c n }\right)^{\frac{2m+2}{\alpha}}\right)\right|}{(1+\alpha ck)(1+\alpha c n)\log\left(\frac{1+\alpha c n}{1+\alpha ck}\right)}\\
&\leq (2m+2)\alpha c^2 n \frac{(1+\alpha c k)^{\frac{2m+2}{\alpha}-3}}{(1+\alpha c n)^{\frac{2m+2}{\alpha}+1}}.
\end{align*}
Now we take the sum over $k$,
\begin{align*}
2 n c^2  \sum_{k=1}^n \left|h(\epsilon_{k+1,n})-h(0)\right|&\leq  4n^2(m+1) \alpha c^4\frac{1}{(1+\alpha c n)^{\frac{2m+2}{\alpha}+1}}\sum_{k=1}^n (1+\alpha c k)^{\frac{2m+2}{\alpha}-3}.
\intertext{Which we can approximate with a Riemann integral;}
2 n c^2  \sum_{k=1}^n \left|h(\epsilon_{k+1,n})-h(0)\right|&\leq 4n^2(m+1) \alpha c^4\frac{1}{(1+\alpha c n)^{\frac{2m+2}{\alpha}+1}}\int_{0}^n (1+\alpha c x)^{\frac{2m+2}{\alpha}-3}dx.
\end{align*}
Now we need to consider cases, firstly in the case where  we have $\frac{2m+2}{\alpha}-3\neq-1$ and so
\begin{align*}&\left| \lim_{n\to \infty}2 n c^2  \sum_{k=1}^n (h(\epsilon_{k+1,n})-h(0))\right|\\
&\leq \lim_{n \to \infty}4n^2(m+1) \alpha c^4\frac{1}{(1+\alpha c n)^{\frac{2m+2}{\alpha}+1}}\left[\frac{1}{\alpha c \left(\frac{2m+2}{\alpha}-2\right)}(1+\alpha c x)^{\frac{2m+2}{\alpha}-2}\right]^n_0\\
&=\lim_{n \to \infty}\left(\frac{2(m+1)\alpha c^3}{ m+1-\alpha}\left(\frac{n^2}{(1+\alpha c n)^3}-\frac{n^2}{(1+\alpha c n)^{\frac{2m+2}{\alpha}+1}}\right)\right).\\
\intertext{Hence, since $0<\alpha<2$,}
&\left| \lim_{n\to \infty}2 n c^2  \sum_{k=1}^n (h(\epsilon_{k+1,n})-h(0))\right|=0
\end{align*}
Now consider the case where $\frac{2m+2}{\alpha}-3=-1$ and so 
\begin{align*}
&\left| \lim_{n\to \infty}2 n c^2 \sum_{k=1}^n( h(\epsilon_{k+1,n})-h(0))\right|
\\&\leq \lim_{n \to \infty}4n^2(m+1) \alpha c^4\frac{1}{(1+\alpha c n)^{\frac{2m+2}{\alpha}+1}}\left[\frac{1}{\alpha c}\log(1+\alpha c x)\right]^n_0\\
&=\lim_{n \to \infty}4n^2  c^3\frac{\log(1+\alpha c n)}{(1+\alpha c n)^3}\\
&=0.
\end{align*}
Therefore in all cases we have 
$$\left| \lim_{n\to \infty}2 n c^2  \sum_{k=1}^n (h(\epsilon_{k+1,n})-h(0))\right|=0.$$
\end{proof} 

Hence by using the above lemma we can ignore the $\epsilon_{k+1,n}$ term in our summation. We now want to check the conditions of Theorem \ref{6.2}. We introduce the notation, 
$$A_m^n=\sum_{k=1}^{n} \Re(a_{k,n}(m)),\;\;\;\;\;\;\;\;\;\;\;B_m^n=\sum_{k=1}^{n} \Im(a_{k,n}(m)).$$
We aim to apply Theorem \ref{6.2} to show joint convergence in distribution of $(A_i^n, B_j^n)_{i,j\geq 1}$ to some multivariate Gaussian distribution. The Cram\'er-Wold Theorem (see for example \cite{durrett_2019}) tells us that it is sufficient to show convergence in distribution of all finite linear combinations of $A_i^n$, $B_j^n$. Therefore, let 
$$\mathfrak{X}_{k,n}=\sum_{i=1}^p\mu_i \Re(a_{k,n}(i))+ \sum_{j=1}^q\nu_j \Im(a_{k,n}(j)) $$ 
for some $1\leq p, q<\infty$ and sequences of scalars $(\mu_i)_{1\leq i \leq p}$, $(\nu_j)_{1\leq j \leq q}$. It follows that $\mathfrak{X}_{k,n}$ is also a martingale difference array. Therefore, we will apply Theorem \ref{6.2} to $\mathfrak{X}_{k,n}$ to show that we have convergence in distribution of finite linear combinations and hence joint convergence in distribution to a multivariate distribution. We start by checking condition (II) of Theorem \ref{6.2} holds. 
\begin{lemma}\label{6.4}
Assume $m>0$ and $0<\alpha<2$. Then
$$\lim_{n\to \infty} \sum_{k=1}^n \mathbb{E}((\Re(a_{k,n}(m)))^2 | \mathcal{F}_{k-1})=\lim_{n\to \infty} \sum_{k=1}^n \mathbb{E}((\Im(a_{k,n}(m)))^2 | \mathcal{F}_{k-1})=\frac{2}{\alpha(2m+2-\alpha)}.$$
Furthermore, for any $m_1,  m_2\geq 1$,
\begin{align*}
\mathrm{Cov}(\Re (a_{k,n}(m_1),\Im (a_{k,n}(m_2)))=0
\end{align*}
and if $m_1\neq m_2$,
$$\mathrm{Cov}(\Re (a_{k,n}(m_1),\Re (a_{k,n}(m_2)))=\mathrm{Cov}(\Im (a_{k,n}(m_1),\Im (a_{k,n}(m_2)))=0.$$

\end{lemma}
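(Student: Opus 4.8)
The plan is to build on the explicit conditional second moment already computed, namely
$$\mathbb{E}\big((\Re(a_{k,n}(m)))^2 \mid \mathcal{F}_{k-1}\big)=\frac{2(c_k^*)^2 n}{(e^{C_{k+1,n}^*})^{2(m+1)}},$$
together with the previous lemma, which allows us to replace $e^{C_{k+1,n}^*}$ by $\big(\tfrac{1+\alpha c n}{1+\alpha c k}\big)^{1/\alpha}$ (that is, to drop the $\epsilon_{k+1,n}$ correction without changing the limit). So it remains to evaluate
$$\lim_{n\to\infty}\ 2nc^2 \sum_{k=1}^n \frac{(1+\alpha c k)^{\frac{2(m+1)}{\alpha}-2}}{(1+\alpha c n)^{\frac{2(m+1)}{\alpha}}}.$$

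First I would compare this sum with the integral $\int_0^n$ of the same integrand, exactly as in the proof of Lemma \ref{boundt}. Write $\gamma=\tfrac{2(m+1)}{\alpha}$; since $m\geq 1$ and $0<\alpha<2$ we have $\gamma>2$, so the summand is increasing in $k$ and the sum–integral discrepancy is controlled by the largest term $(1+\alpha cn)^{\gamma-2}/(1+\alpha cn)^{\gamma}=(1+\alpha cn)^{-2}$, which tends to $0$ even after multiplication by $2nc^2$. Then the substitution $u=\frac{1+\alpha c x}{1+\alpha c n}$ turns the integral into $\frac{1}{\alpha c(1+\alpha cn)}\int_{(1+\alpha cn)^{-1}}^{1} u^{\gamma-2}\,du$. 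The lower limit tends to $0$, and because $\gamma-2>-1$ (again using $2(m+1)>\alpha$) the integral converges to $\frac{1}{\gamma-1}=\frac{\alpha}{2m+2-\alpha}$. Multiplying by $2nc^2$ and using $1+\alpha cn\sim \alpha cn$ yields the stated value $\frac{2}{\alpha(2m+2-\alpha)}$. The identical statement for $\Im(a_{k,n}(m))$ follows term by term, since $\frac{1}{2\pi}\int_0^{2\pi}\sin^2(\theta(m+1))\,d\theta=\frac{1}{2\pi}\int_0^{2\pi}\cos^2(\theta(m+1))\,d\theta=\tfrac12$.

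For the covariance identities I would argue directly: the only randomness in $a_{k,n}(m)$ is the uniform angle $\theta_k$, so
$$\mathrm{Cov}\big(\Re(a_{k,n}(m_1)),\Re(a_{k,n}(m_2))\big)=\frac{4(c_k^*)^2 n}{(e^{C_{k+1,n}^*})^{m_1+m_2+2}}\cdot\frac{1}{2\pi}\int_0^{2\pi}\cos(\theta(m_1+1))\cos(\theta(m_2+1))\,d\theta,$$
with the analogous formulas for the $\Re\cdot\Im$ and $\Im\cdot\Im$ cases. Applying the product-to-sum identities, each integrand becomes a linear combination of terms of the form $\cos(\theta(m_1-m_2))$, $\cos(\theta(m_1+m_2+2))$, $\sin(\theta(m_1-m_2))$, $\sin(\theta(m_1+m_2+2))$. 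Every $\sin$ term integrates to $0$ over a full period, and $\int_0^{2\pi}\cos(\theta j)\,d\theta=0$ for every integer $j\neq 0$. Since $m_1+m_2+2\neq 0$ always and $m_1-m_2\neq 0$ exactly when $m_1\neq m_2$, this gives $\mathrm{Cov}(\Re(a_{k,n}(m_1)),\Im(a_{k,n}(m_2)))=0$ unconditionally, and $\mathrm{Cov}(\Re(a_{k,n}(m_1)),\Re(a_{k,n}(m_2)))=\mathrm{Cov}(\Im(a_{k,n}(m_1)),\Im(a_{k,n}(m_2)))=0$ whenever $m_1\neq m_2$.

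The only mildly delicate point is making the sum-to-integral replacement uniform in $n$, but the monotonicity of the summand reduces this to the single elementary bound above, so I do not expect a real obstacle; the remainder is bookkeeping with the Riemann-integral and geometric-series comparisons already used repeatedly in Sections 3 and 4.
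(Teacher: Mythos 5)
Your proposal is correct and follows essentially the same approach as the paper: reduce to the $\epsilon$-free sum via the preceding lemma, compare it to the Riemann integral $\int_0^n(1+\alpha cx)^{\frac{2(m+1)}{\alpha}-2}\,dx$, and evaluate; the covariances vanish by orthogonality of the trigonometric functions. Your only additions are the explicit monotonicity-based bound on the sum-to-integral error (which the paper leaves implicit) and the product-to-sum expansion for the covariance integrals, both of which are sound.
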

\begin{proof}
We have shown above that, in the case of the real coefficients, calculating \newline $\lim_{n\to \infty} \sum_{k=1}^n \mathbb{E}((\Re(a_{k,n}(m)))^2 | \mathcal{F}_{k-1})$ reduces to calculating the expression $$\lim_{n\to \infty}\left(2 n c^2  \sum_{k=1}^n \frac{\left(1+\alpha ck\right)^{\left(\frac{(2(m+1))}{\alpha}\right)-2}}{\left(1+\alpha cn\right)^{\left(\frac{(2(m+1))}{\alpha}\right)}}\right).$$
The imaginary coefficients follow by the same argument. We can approximate this with a Riemann integral.
$$2 n c^2  \sum_{k=1}^n \frac{\left(1+\alpha ck\right)^{\left(\frac{(2(m+1))}{\alpha}\right)-2}}{\left(1+\alpha cn\right)^{\left(\frac{(2(m+1))}{\alpha}\right)}}\approx \frac{2 n c^2}{\left(1+\alpha cn\right)^{\left(\frac{(2(m+1))}{\alpha}\right)}}\int_0^{n}(1+\alpha c x)^{\left(\frac{(2(m+1))}{\alpha}\right)-2} dx$$
Since for all $m>0$ and $0<\alpha<2$, $\frac{(2(m+1))}{\alpha}-2>-1$, we have,
\begin{align*}
&=\frac{2 n c^2}{\left(1+\alpha cn\right)^{\left(\frac{(2(m+1))}{\alpha}\right)}}\left[\frac{1}{2c(m+1)-\alpha c}(1+\alpha c x)^{\left(\frac{(2(m+1))}{\alpha}\right)-1}\right]_0^{n}\\
&=\frac{2 c^2}{2c(m+1)-\alpha c}\left[\frac{n}{(1+\alpha cn)}-\frac{n}{\left(1+\alpha cn\right)^{\left(\frac{(2(m+1))}{\alpha}\right)}}\right].
\end{align*}
We know for  all $m>0$ and $0<\alpha<2$, $\frac{(2(m+1))}{\alpha}>1$ and so when we take the limit as $n\to \infty$ we have,
$$\lim_{n\to \infty}\left(2 n c^2  \sum_{k=1}^n \frac{\left(1+\alpha ck \right)^{\left(\frac{(2(m+1))}{\alpha}\right)-2}}{\left(1+\alpha cn\right)^{\left(\frac{(2(m+1))}{\alpha}\right)}}\right)=\frac{2 }{\alpha(2(m+1)-\alpha )}.$$
Therefore,
$$\lim_{n\to \infty} \sum_{k=1}^n \mathbb{E}((\Re(a_{k,n}(m)))^2 | \mathcal{F}_{k-1})=\lim_{n\to \infty} \sum_{k=1}^n \mathbb{E}((\Im(a_{k,n}(m)))^2 | \mathcal{F}_{k-1})=\frac{2}{\alpha(2m+2-\alpha)}.$$
Furthermore, calculating the covariance pairwise of each combination of the random variables we see that for any $m_1, m_2$
\begin{align*}
\mathrm{Cov}(\Re (a_{k,n}(m_1)),\Im (a_{k,n}(m_2)))&=\mathbb{E}(\Re (a_{k,n}(m_1))\Im (a_{k,n}(m_2)))\\
&=\frac{4n(c_k^{*})^2}{( e^{C_{k+1,n}^{*}})^{m_1+m_2+2}}\int_{0}^{2\pi}\cos(\theta (m_1+1))\sin(\theta (m_2+1)) d\theta\\
&=0.
\end{align*}
Moreover for $m_1\neq m_2$,
\begin{align*}
\mathrm{Cov}(\Re (a_{k,n}(m_1)),\Re (a_{k,n}(m_2)))&=\mathbb{E}(\Re (a_{k,n}(m_1))\Re (a_{k,n}(m_2)))\\
&=\frac{4n(c_k^{*})^2}{( e^{C_{k+1,n}^{*}})^{m_1+m_2+2}}\int_{0}^{2\pi}\cos(\theta (m_1+1))\cos(\theta (m_2+1)) d\theta\\
&=0.
\end{align*}
For $m_1\neq m_2$,
\begin{align*}
\mathrm{Cov}(\Im (a_{k,n}(m_1)),\Im (a_{k,n}(m_2)))&=\mathbb{E}(\Im (a_{k,n}(m_1))\Im (a_{k,n}(m_2)))\\
&=\frac{4n(c_k^{*})^2}{( e^{C_{k+1,n}^{*}})^{m_1+m_2+2}}\int_{0}^{2\pi}\sin(\theta (m_1+1))\sin(\theta (m_2+1)) d\theta\\
&=0.
\end{align*}

\end{proof}
So we have shown that sum of the second moments of the real and imaginary parts converge. Note that it is clear to see that letting $\alpha=2$ will not provide a finite limit using the above lemma. To apply Theorem \ref{6.2} we need to show that $\sum_{k=1}^n (\mathfrak{X}_{k,n})^2 $ also converges. We prove this with the following lemma, using a similar method to that of Silvestri in \cite{silvestri2017fluctuation}. 

\begin{lemma}\label{b}
Let $0<\alpha<2$ and assume for each $m>0$, the following limit holds in probability for some $s^2>0$, $$\lim_{n\to \infty} \sum_{k=1}^n \mathbb{E}((\Re(a_{k,n}(m)))^2 | \mathcal{F}_{k-1})=\lim_{n\to \infty} \sum_{k=1}^n \mathbb{E}((\Im(a_{k,n}(m)))^2 | \mathcal{F}_{k-1})=s^2.$$ 
Then for each $m>0$, in probability,
$$\lim_{n\to \infty} \sum_{k=1}^n \Re(a_{k,n}(m)))^2 =\lim_{n\to \infty} \sum_{k=1}^n \Im(a_{k,n}(m)))^2=s^2.$$ 
Therefore, if the following limit holds in probability for some $s^2>0$, $$\lim_{n\to \infty} \sum_{k=1}^n \mathbb{E}((\mathfrak{X}_{k,n})^2 | \mathcal{F}_{k-1})=s^2$$
then in probability,
$$\lim_{n\to \infty} \sum_{k=1}^n(\mathfrak{X}_{k,n})^2 =s^2.$$
\end{lemma}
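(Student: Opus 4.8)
The plan is to deduce convergence of the sums of squares from the assumed convergence of the sums of conditional second moments, exploiting that in each case the difference of the two is a martingale sum whose $L^2$-norm can be controlled by a fourth-moment estimate. Fix an integer $m>0$ and write
\[
Z_{k,n}=\big(\Re(a_{k,n}(m))\big)^2-\mathbb{E}\big(\big(\Re(a_{k,n}(m))\big)^2\mid\mathcal{F}_{k-1}\big).
\]
Since $\Re(a_{k,n}(m))$ is a function of $\theta_k$ alone (the quantities $c_k^*$ and $C_{k+1,n}^*$ being deterministic), $Z_{k,n}$ is $\mathcal{F}_k$-measurable with $\mathbb{E}(Z_{k,n}\mid\mathcal{F}_{k-1})=0$, so $\{Z_{k,n}\}_{1\le k\le n}$ is a martingale difference array. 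Orthogonality of the increments then gives
\[
\mathbb{E}\Big[\Big(\sum_{k=1}^nZ_{k,n}\Big)^2\Big]=\sum_{k=1}^n\mathbb{E}\big[Z_{k,n}^2\big]\le\sum_{k=1}^n\mathbb{E}\big[\big(\Re(a_{k,n}(m))\big)^4\big],
\]
so it suffices to show the last sum tends to $0$: this forces $\sum_kZ_{k,n}\to0$ in $L^2$, hence in probability, and adding the assumed limit $\sum_k\mathbb{E}((\Re(a_{k,n}(m)))^2\mid\mathcal{F}_{k-1})\to s^2$ yields $\sum_k(\Re(a_{k,n}(m)))^2\to s^2$ in probability; the imaginary part is identical.

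For the fourth-moment sum I would compute, using $\tfrac1{2\pi}\int_0^{2\pi}\cos^4(\theta(m+1))\,d\theta=\tfrac38$,
\[
\mathbb{E}\big[\big(\Re(a_{k,n}(m))\big)^4\big]=6\,\frac{(c_k^*)^4n^2}{(e^{C_{k+1,n}^*})^{4(m+1)}},
\]
then use $e^{C_{k+1,n}^*}=\big(\tfrac{1+\alpha cn}{1+\alpha ck}\big)^{(1+\epsilon_{k+1,n})/\alpha}\ge\big(\tfrac{1+\alpha cn}{1+\alpha ck}\big)^{1/\alpha}$ (so the error term $\epsilon_{k+1,n}$ is discarded in the favourable direction), substitute $c_k^*=c/(1+\alpha c(k-1))$, and compare the resulting sum with a Riemann integral exactly as in Lemma \ref{6.4}, obtaining
\[
\sum_{k=1}^n\mathbb{E}\big[\big(\Re(a_{k,n}(m))\big)^4\big]\le\lambda(\alpha,c)\,\frac{n^2}{(1+\alpha cn)^{\frac{4(m+1)}{\alpha}}}\int_0^n(1+\alpha cx)^{\frac{4(m+1)}{\alpha}-4}\,dx.
\]
The exponent $\tfrac{4(m+1)}{\alpha}-4$ exceeds $-1$ for every integer $m\ge1$ and every $\alpha\in(0,2)$ (indeed $\tfrac{4(m+1)}{\alpha}\ge\tfrac8\alpha>4$), so the integral is of order $(1+\alpha cn)^{\frac{4(m+1)}{\alpha}-3}$ and the whole bound is $O\big(n^2/(1+\alpha cn)^3\big)=O(1/n)\to0$.

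For the linear combination $\mathfrak{X}_{k,n}$ the same scheme applies: $\{(\mathfrak{X}_{k,n})^2-\mathbb{E}((\mathfrak{X}_{k,n})^2\mid\mathcal{F}_{k-1})\}_k$ is a martingale difference array, so by orthogonality its squared $L^2$-norm equals $\sum_k\mathbb{E}[((\mathfrak{X}_{k,n})^2-\mathbb{E}((\mathfrak{X}_{k,n})^2\mid\mathcal{F}_{k-1}))^2]\le\sum_k\mathbb{E}[(\mathfrak{X}_{k,n})^4]$. Expanding $(\mathfrak{X}_{k,n})^4$ gives a finite linear combination, with coefficients depending only on $p$, $q$, $(\mu_i)$, $(\nu_j)$, of products of four factors each of the form $\Re(a_{k,n}(i))$ or $\Im(a_{k,n}(j))$ with $i,j\ge1$; bounding every trigonometric factor by $1$ in modulus, each such product has conditional expectation at most $\lambda\,n^2(c_k^*)^4(e^{C_{k+1,n}^*})^{-8}$ in absolute value (the total exponent $\sum_\ell(i_\ell+1)$ is $\ge8$ and $e^{C_{k+1,n}^*}\ge1$). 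Hence $\sum_k\mathbb{E}[(\mathfrak{X}_{k,n})^4]$ is dominated by a constant multiple of the $m=1$ estimate of the previous paragraph, which tends to $0$; combining with the hypothesis $\sum_k\mathbb{E}((\mathfrak{X}_{k,n})^2\mid\mathcal{F}_{k-1})\to s^2$ in probability gives $\sum_k(\mathfrak{X}_{k,n})^2\to s^2$ in probability.

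The only genuinely load-bearing points are the verification that $\tfrac{4(m+1)}{\alpha}-4>-1$ — which is precisely where $0<\alpha<2$ (together with $m\ge1$) enters, mirroring the role of that hypothesis in Lemma \ref{6.4} — and, in the $\mathfrak{X}_{k,n}$ step, checking that the worst contribution (all four indices equal to $1$) is still negligible after the $n^2$ rescaling; both reduce to the integral estimate above. The remaining ingredients (the trigonometric moments, discarding $\epsilon_{k+1,n}$, and the Riemann-sum comparison) are routine and run exactly as in the proof of Lemma \ref{6.4}.
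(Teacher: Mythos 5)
Your proposal is correct and follows essentially the same route as the paper's proof: control the martingale sum $\sum_k Z_{k,n}$ in $L^2$ by the sum of fourth moments, bound that sum via the pointwise estimate on $\Re(a_{k,n}(m))$ and a Riemann-integral comparison, and apply the same scheme to $\mathfrak{X}_{k,n}$. The differences are cosmetic — you compute the trigonometric fourth moment exactly and use orthogonality of martingale increments, where the paper uses Markov's inequality plus $\mathbb{E}[(X-\mathbb{E}X)^2]\le\mathbb{E}[X^2]$ and a crude sup bound; you avoid the paper's case split on $\alpha$ versus $\tfrac{4}{3}(m+1)$ by noting it is vacuous when $m\ge 1$ (the paper's extra case is only relevant for $m=0$, which later appears in Corollary~\ref{6.6} despite the ``$m>0$'' in the lemma statement); and for $\mathfrak{X}_{k,n}$ you expand the fourth power directly and bound termwise rather than invoking Jensen's inequality, which is if anything cleaner.
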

\begin{proof}
First we note that  $$\mathcal{Y}_k(z)=(\Re(a_{k,n}(m)))^2-\mathbb{E}((\Re(a_{k,n}(m)))^2 | \mathcal{F}_{k-1})$$ is a martingale difference array with respect to the filtration $(\mathcal{F}_{k, n})_{k\leq n}$. We need to show $\mathbb{P}(|\sum_{k=1}^n\mathcal{Y}_k(z)|>\eta)\to 0 $ as $n\to \infty$. So we first notice that by Markov's inequality,
$$\mathbb{P}\left(|\sum_{k=1}^n\mathcal{Y}_k|>\eta \right)\leq \frac{1}{\eta^2}\mathbb{E}\left(|\sum_{k=1}^n\mathcal{Y}_k|^2\right)=\frac{1}{\eta^2}\sum_{k=1}^n\mathbb{E}(\mathcal{Y}_k^2).$$
and so finally by using the property that for a random variable $X$, $\mathbb{E}((X-\mathbb{E}(X))^2)\leq \mathbb{E}(X^2)$ we see 
$$\mathbb{P}\left(|\sum_{k=1}^n\mathcal{Y}_k|>\eta\right)\leq \frac{1}{\eta^2}\sum_{k=1}^n \mathbb{E}(\Re(a_{k,n}(m)))^4).$$
We have shown,
$$\Re(a_{k,n}(m))=\frac{2 c_k^*\sqrt{n}}{( e^{C_{k+1,n}^{*}})^{m+1}}\cos(\theta_k(m+1)).$$
So using the property that $c_k^{*}=\frac{c}{1+\alpha c(k-1)}$ and $ e^{-C_{k+1,n}^{*}}\leq \left(\frac{1+\alpha c k}{1+\alpha c n}\right)^{1/\alpha}$ we reach the upper bound,
\begin{equation}\label{eq:1}
\Re(a_{k,n}(m))\leq 2c(1+\alpha c)\sqrt{n}\frac{(1+\alpha c k)^{\frac{m+1}{\alpha}-1}}{(1+\alpha c n)^{\frac{m+1}{\alpha}}}.
\end{equation}
Thus, 
$$\Re(a_{k,n}(m))^4\leq \left(2c(1+\alpha c)\right)^4 \frac{n^2(1+\alpha c k)^{\frac{4(m+1)}{\alpha}-4}}{(1+\alpha c n)^{\frac{4(m+1)}{\alpha}}}.$$
Then we consider cases. If $0<\alpha\leq\frac{4}{3}(m+1)$ then when we sum over $k$ we reach the following bound,
$$\frac{1}{\eta^2}\left(\sum_{k=1}^n \mathbb{E}\left((\Re(a_{k,n}(m)))^4\right)\right)\leq \lambda(\alpha,c)  \frac{1}{n}$$
where $\lambda(\alpha,c)$  is some constant. This converges to zero as $n \to \infty$. Moreover, if $\frac{4}{3}(m+1)<\alpha<2$ then when we sum over $k$ we reach the following bound,
$$\frac{1}{\eta^2}\left(\sum_{k=1}^n \mathbb{E}\left((\Re(a_{k,n}(m)))^4\right)\right)\leq \lambda(\alpha,c)   \frac{n}{n^{\frac{4(m+1)}{\alpha}}}$$
where $\lambda(\alpha,c)$  is some constant. This converges to zero as $n \to \infty$. Therefore in both cases we have convergence to zero. The proof of the imaginary case holds by the same argument. Now we consider $\lim_{n\to \infty} \sum_{k=1}^n \mathbb{E}((\mathfrak{X}_{k,n})^2 | \mathcal{F}_{k-1})$. By the same argument as above, $$\mathbb{P}\left(\left|\sum_{k=1}^n \left((\mathfrak{X}_{k,n})^2-\mathbb{E}((\mathfrak{X}_{k,n})^2 | \mathcal{F}_{k-1})\right)\right|>\eta\right)\leq \frac{1}{\eta^2}\sum_{k=1}^n \mathbb{E}((\mathfrak{X}_{k,n})^4).$$
Since the function $f(x)=x^4$, where $f:\mathbb{R}\to\mathbb{R}$, is convex, by Jensen's inequality,
\begin{align*}
(\mathfrak{X}_{k,n})^4&\leq\frac{\sum_{i=1}^p|\mu_i|(\Re (a_{k,n}(i))^4+\sum_{j=1}^q|\nu_j|(\Im (a_{k,n}(j))^4}{\sum_{i=1}^p|\mu_i|+\sum_{j=1}^q|\nu_j|}.
\end{align*}
Therefore,
\begin{align*}
&\sum_{k=1}^n \mathbb{E}((\mathfrak{X}_{k,n})^4)\\
&\leq \sum_{k=1}^n\left( \frac{\sum_{i=1}^p|\mu_i|\mathbb{E}\left((\Re (a_{k,n}(i))^4\right)+\sum_{j=1}^q|\nu_j|\mathbb{E}\left((\Im (a_{k,n}(j))^4\right)}{p\inf_{1\leq i\leq p}|\mu_i|+q \inf_{1\leq j\leq q}|\nu_j|}\right)\\
&\frac{\leq p\sup_{1\leq i\leq p}\left(|\mu_i|\sum_{k=1}^n \mathbb{E}\left((\Re (a_{k,n}(i))^4\right)\right)+q\sup_{1\leq j\leq q}\left(|\nu_j|\sum_{k=1}^n \mathbb{E}\left((\Re (a_{k,n}(j))^4\right)\right)}{{p\inf_{1\leq i\leq p}|\mu_i|+q \inf_{1\leq j\leq q}|\nu_j|}}\\
&\to 0
\end{align*}
as $n\to \infty$ by above. 
\end{proof}
Therefore, we have shown, in the form of the following corollary, that the condition (II) of Theorem \ref{6.2} is satisfied. 
\begin{corollary}\label{6.6}
For $a_{k,n}(m)$ defined as above, then for each $m\geq 0$ the following limit holds in probability,
$$\lim_{n\to \infty} \sum_{k=1}^n \Re(a_{k,n}(m)))^2 =\lim_{n\to \infty} \sum_{k=1}^n \Im(a_{k,n}(m)))^2=\frac{2}{\alpha(2m+2-\alpha)}.$$
Therefore, with $\mathfrak{X}_{k,n}$ defined as above, 
$$\lim_{n\to \infty} \sum_{k=1}^n(\mathfrak{X}_{k,n})^2=\sum_{i=1}^p\left(\mu_i^2 \frac{2}{\alpha(2i+2-\alpha)}\right)+ \sum_{j=1}^q\left(\nu_j^2 \frac{2}{\alpha(2j+2-\alpha)}\right)$$
\end{corollary}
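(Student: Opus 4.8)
The plan is to read the corollary off directly from Lemma~\ref{6.4} and Lemma~\ref{b}; between them these two results already contain all of the analytic content, so what is left is essentially bookkeeping. For the first displayed limit, Lemma~\ref{6.4} tells us that $\sum_{k=1}^n \mathbb{E}((\Re(a_{k,n}(m)))^2 \mid \mathcal{F}_{k-1})$ and $\sum_{k=1}^n \mathbb{E}((\Im(a_{k,n}(m)))^2 \mid \mathcal{F}_{k-1})$ both converge to $\frac{2}{\alpha(2m+2-\alpha)}$; since the $c_k^{*}$ and $C^{*}_{k,n}$ are deterministic, these partial sums are non-random, so in particular the convergence holds in probability. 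Instantiating Lemma~\ref{b} with $s^2 = \frac{2}{\alpha(2m+2-\alpha)}$, which is strictly positive precisely because $0<\alpha<2$, its first conclusion gives $\sum_{k=1}^n (\Re(a_{k,n}(m)))^2 \to s^2$ and $\sum_{k=1}^n (\Im(a_{k,n}(m)))^2 \to s^2$ in probability, which is the first display of the corollary. The boundary value $m=0$ is not literally stated in Lemma~\ref{6.4}, but the exponent inequalities used there, namely $\tfrac{2(m+1)}{\alpha}-2>-1$ and $\tfrac{2(m+1)}{\alpha}>1$, remain valid for $m=0$ as soon as $0<\alpha<2$, so the identical Riemann-sum computation applies and the full range $m\geq 0$ is covered.

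For the second displayed limit I would first compute the conditional second moment $\mathbb{E}((\mathfrak{X}_{k,n})^2 \mid \mathcal{F}_{k-1})$. Expanding the square and using bilinearity of the conditional covariance, this equals $\sum_{i=1}^p \mu_i^2\, \mathbb{E}((\Re(a_{k,n}(i)))^2 \mid \mathcal{F}_{k-1}) + \sum_{j=1}^q \nu_j^2\, \mathbb{E}((\Im(a_{k,n}(j)))^2 \mid \mathcal{F}_{k-1})$ plus off-diagonal cross terms, each of which is a conditional covariance of the shape $\mathrm{Cov}(\Re(a_{k,n}(m_1)),\Im(a_{k,n}(m_2)) \mid \mathcal{F}_{k-1})$, or $\mathrm{Cov}(\Re(a_{k,n}(m_1)),\Re(a_{k,n}(m_2)) \mid \mathcal{F}_{k-1})$ and its imaginary analogue with $m_1\neq m_2$. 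Because the only randomness in the $a_{k,n}(\cdot)$ at step $k$ is $\theta_k$, which is independent of $\mathcal{F}_{k-1}$, each of these conditional covariances is a deterministic multiple of exactly the trigonometric integral already shown to vanish in Lemma~\ref{6.4}, so every cross term is zero. Summing over $k$ and using the first part of the proof, $\sum_{k=1}^n \mathbb{E}((\mathfrak{X}_{k,n})^2 \mid \mathcal{F}_{k-1})$ converges (deterministically, hence in probability) to $s^2 := \sum_{i=1}^p \mu_i^2\, \frac{2}{\alpha(2i+2-\alpha)} + \sum_{j=1}^q \nu_j^2\, \frac{2}{\alpha(2j+2-\alpha)}$.

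Finally, the last assertion of Lemma~\ref{b} applies verbatim: its hypothesis is precisely that $\sum_{k=1}^n \mathbb{E}((\mathfrak{X}_{k,n})^2 \mid \mathcal{F}_{k-1}) \to s^2$ in probability for some $s^2>0$, which we have just checked (the only excluded case being the trivial one in which all $\mu_i$ and $\nu_j$ vanish, where both sides are zero), and its conclusion is $\sum_{k=1}^n (\mathfrak{X}_{k,n})^2 \to s^2$ in probability, i.e.\ the second display of the corollary. I do not expect any genuine obstacle here, since the corollary is really just a repackaging of the two preceding lemmas; the one point that needs a little care is that the off-diagonal terms must be shown to vanish at the conditional level rather than merely unconditionally, which is exactly where the independence of $\theta_k$ from $\mathcal{F}_{k-1}$ enters, reducing each conditional covariance to the integral already computed in Lemma~\ref{6.4}.
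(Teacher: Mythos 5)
Your proposal is correct and matches the route the paper implies: Corollary~\ref{6.6} is stated without proof precisely because it is meant to follow by combining Lemma~\ref{6.4} with Lemma~\ref{b}, which is exactly what you do. You also flag two small gaps the paper leaves implicit — the $m=0$ case of Lemma~\ref{6.4} and the need for the covariance cancellations to hold conditionally rather than merely unconditionally — and correctly explain why both are harmless (the exponent inequalities persist for $m=0$ whenever $0<\alpha<2$, and the covariances in Lemma~\ref{6.4}'s proof are in fact computed conditionally since only $\theta_k$ is random at step $k$).
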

So now we just need show condition (I) of Theorem \ref{6.2} holds in order to apply it. We will again use a similar method to Silvestri \cite{silvestri2017fluctuation}.
\begin{lemma}\label{6.7}
Let $0<\alpha<2$ and let $\mathfrak{X}_{k,n}$ be defined as above. Let $\rho>0$ then the following limit holds in probability,
$$ \sum_{k=1}^n (\mathfrak{X}_{k,n})^2 \mathbb{1}(|\mathfrak{X}_{k,n}|>\rho)\to 0$$
as $n\to \infty$.
\end{lemma}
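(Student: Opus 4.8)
The plan is to deduce condition (I) of McLeish's theorem directly from the fourth-moment bound already established inside the proof of Lemma \ref{b}. First I would record the elementary pointwise inequality: on the event $\{|\mathfrak{X}_{k,n}|>\rho\}$ one has $(\mathfrak{X}_{k,n})^2\le \rho^{-2}(\mathfrak{X}_{k,n})^4$, and therefore
\[
\sum_{k=1}^n (\mathfrak{X}_{k,n})^2\,\mathbb{1}(|\mathfrak{X}_{k,n}|>\rho)\;\le\;\frac{1}{\rho^2}\sum_{k=1}^n (\mathfrak{X}_{k,n})^4 .
\]
Next I would take expectations of both sides. The chain of estimates at the end of the proof of Lemma \ref{b} shows precisely that $\sum_{k=1}^n\mathbb{E}\big((\mathfrak{X}_{k,n})^4\big)\to 0$ as $n\to\infty$: this came from Jensen's inequality for $x\mapsto x^4$ applied to the linear combination defining $\mathfrak{X}_{k,n}$, together with the fourth-moment bound (\ref{eq:1}) on each $\Re(a_{k,n}(i))$ (and the analogous bound on $\Im(a_{k,n}(j))$) and the case split $0<\alpha\le\frac{4}{3}(i+1)$ versus $\frac{4}{3}(i+1)<\alpha<2$. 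Combining these gives $\mathbb{E}\big(\sum_{k=1}^n (\mathfrak{X}_{k,n})^2\,\mathbb{1}(|\mathfrak{X}_{k,n}|>\rho)\big)\to 0$.

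Finally I would conclude: the random variable $\sum_{k=1}^n (\mathfrak{X}_{k,n})^2\,\mathbb{1}(|\mathfrak{X}_{k,n}|>\rho)$ is non-negative, so convergence of its mean to zero forces convergence to zero in $L^1$, hence in probability. Equivalently, one may first apply Markov's inequality to $\sum_k (\mathfrak{X}_{k,n})^4$ to get $\sum_k (\mathfrak{X}_{k,n})^4\to 0$ in probability, and then use the pointwise bound above. Either route completes the proof of the lemma.

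I do not expect any genuine obstacle here, since all of the real analysis — the Riemann-sum comparisons, the elimination of the $\epsilon_{k,n}$ correction, and the moment computations for $a_{k,n}(m)$ — has already been carried out in Lemmas \ref{6.4} and \ref{b}. The only mild subtlety is bookkeeping: the $\ell^4$ estimate must be applied separately to each of the finitely many modes $i\le p$ and $j\le q$ occurring in $\mathfrak{X}_{k,n}$ and then recombined with the fixed weights $\mu_i,\nu_j$, exactly as in the Jensen step of Lemma \ref{b}. Once Lemma \ref{6.7} is in place, both hypotheses of McLeish's theorem (Theorem \ref{6.2}) hold for $\mathfrak{X}_{k,n}$ — condition (II) being supplied by Corollary \ref{6.6} — so $\sum_{k=1}^n\mathfrak{X}_{k,n}$ converges in distribution to a centred Gaussian, and the Cram\'er--Wold device then delivers the joint convergence of $(A_i^n,B_j^n)_{i,j\ge1}$ underlying Theorem \ref{6.10}.
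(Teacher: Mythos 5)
Your proof is correct, but it takes a genuinely different route from the paper's. You observe that on the event $\{|\mathfrak{X}_{k,n}|>\rho\}$ one has $(\mathfrak{X}_{k,n})^2\le\rho^{-2}(\mathfrak{X}_{k,n})^4$, so the whole Lindeberg sum is dominated deterministically by $\rho^{-2}\sum_k(\mathfrak{X}_{k,n})^4$; since the end of the proof of Lemma \ref{b} already shows $\sum_k\mathbb{E}\bigl((\mathfrak{X}_{k,n})^4\bigr)\to 0$, you get $L^1$ (hence in-probability) convergence to zero at no extra cost. The paper instead follows Silvestri's device: it bounds $\mathbb{P}\bigl(\sum_k(\mathfrak{X}_{k,n})^2\mathbb{1}(|\mathfrak{X}_{k,n}|>\rho)>\delta\bigr)$ by $\mathbb{P}\bigl(\max_k|\mathfrak{X}_{k,n}|>\rho\bigr)$, applies Markov to $\mathbb{E}\bigl(\max_k|\mathfrak{X}_{k,n}|\bigr)$, and then exploits the deterministic first-power bound $|\Re(a_{k,n}(m))|\le\lambda(\alpha,c)\sqrt{n}\,(1+\alpha ck)^{\frac{m+1}{\alpha}-1}(1+\alpha cn)^{-\frac{m+1}{\alpha}}$, whose maximum over $k$ tends to zero. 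Your argument is arguably tidier because it reuses the fourth-moment computation verbatim rather than introducing a separate sup estimate; the paper's max-based argument has the advantage of needing only first-moment control on the individual $a_{k,n}(m)$ and of tracking the uniform smallness of the increments explicitly, which some readers may find more illuminating as a verification of the Lindeberg condition. Either way, both hypotheses of McLeish's theorem are met and the downstream conclusion (Theorem \ref{6.8}) is unaffected.
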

\begin{proof}
We use a similar method as \cite{silvestri2017fluctuation}. Let $\delta>0$ then 
\begin{align*}
&\mathbb{P}\left(\sum_{k=1}^n (\mathfrak{X}_{k,n})^2 \mathbb{1}(|\mathfrak{X}_{k,n}|>\rho)>\delta\right)\\
&\leq \mathbb{P}\left(\max_{1\leq k\leq n}|\mathfrak{X}_{k,n}|>\rho\right)\\
&\leq \frac{1}{\rho}\mathbb{E}\left(\max_{1\leq k\leq n}|\mathfrak{X}_{k,n}|\right)\\
&\leq \frac{1}{\rho}\left(\sum_{i=1}^p\mu_i \mathbb{E}\left(\max_{1\leq k\leq n}|\Re(a_{k,n}(i))|\right)+ \sum_{j=1}^q\nu_j \mathbb{E}\left(\max_{1\leq k\leq n}|\Im(a_{k,n}(j))|\right)\right)
\end{align*}
with the second inequality following by Markov's inequality. 
As in the proof of Lemma \ref{b}, we have shown that for each $m\geq 0$,
\begin{align*}
&\left|\Re(a_{k,n}(m))\right|\leq 2c(1+\alpha c)\sqrt{n}\frac{(1+\alpha c k)^{\frac{m+1}{\alpha}-1}}{(1+\alpha c n)^{\frac{m+1}{\alpha}}}.
\intertext{So if $m+1\geq \alpha$,}
&\max_{0\leq k\leq n}\left|\Re(a_{k,n}(m))\right|\leq 2c(1+\alpha c)\sqrt{n}\frac{1}{(1+\alpha c n)}.
\intertext{Then if $m+1<\alpha$,}
&\max_{0\leq k\leq n}\left|\Re(a_{k,n}(m))\right|\leq 2c(1+\alpha c)\sqrt{n}\frac{1}{(1+\alpha c n)^{\frac{m+1}{\alpha}}}.
\end{align*}
In both cases $\max_{0\leq k\leq n}\Re(a_{k,n}(m))$ converges to zero as $n \to \infty$. The imaginary case follows by the same argument. Thus the finite sums also converge to zero,
$$\frac{1}{\rho}\left(\sum_{i=1}^p\mu_i \mathbb{E}\left(\max_{1\leq k\leq n}|\Re(a_{k,n}(i))|\right)+ \sum_{j=1}^q\nu_j \mathbb{E}\left(\max_{1\leq k\leq n}|\Im(a_{k,n}(j))|\right)\right)\to 0$$
 as $n\to \infty$. Therefore, 
$$ \sum_{k=1}^n (\mathfrak{X}_{k,n})^2 \mathbb{1}(|\mathfrak{X}_{k,n}|>\rho)\to 0$$
in probability as $n\to \infty$.
\end{proof}
So now we have all we need in order to apply Theorem \ref{6.2}. This leads to the following result.
\begin{theorem}\label{6.8}
Let $0<\alpha<2$ and $A^n_m$, $B_m^n$ defined as above. Then the following limit holds in joint distibution, 
\begin{align*}    
        \begin{pmatrix}      
           A^n_1+iB_1^n \\           
           \vdots \\
           A^n_m+iB_m^n \\
           \vdots \\
    \end{pmatrix}
    \to
   \begin{pmatrix}      
           A_1 \\           
           \vdots \\
           A_m \\
           \vdots \\
    \end{pmatrix}
    +i
    \begin{pmatrix}      
           B_1 \\           
           \vdots \\
           B_m \\
           \vdots \\
    \end{pmatrix}
  \end{align*}
where $(A_i,B_j)_{i,j\geq 0}$ is a multivariate Gaussian distribution with $\mathbb{E}(A_i)=\mathbb{E}(B_j)=0$ for all $i,j\geq 0$ and covariance structure given by,
\begin{align*}
\mathrm{Cov}(A_i,B_j)&=0 \\
\mathrm{Cov}(A_i,A_j)=\mathrm{Cov}(B_i,B_j)&=\delta_{i,j}\left(\frac{2}{\alpha(2i+2-\alpha)}\right)
\end{align*}
for any $i,j\geq0$ where $\delta_{i,j}$ is the Kronecker delta function.
\end{theorem}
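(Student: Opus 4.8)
The plan is to combine McLeish's martingale central limit theorem (Theorem \ref{6.2}) with the Cram\'er--Wold device; all of the analytic input has already been assembled in Lemma \ref{6.4}, Lemma \ref{6.7} and Corollary \ref{6.6}, so what remains is essentially to package it.

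First I would fix $p,q\in\mathbb{N}$ and real scalars $(\mu_i)_{1\le i\le p}$, $(\nu_j)_{1\le j\le q}$ that are not all zero, and work with the array $\mathfrak{X}_{k,n}$ defined above. Since the only randomness in each $a_{k,n}(m)$ is $\theta_k$ and $\int_0^{2\pi}e^{i\theta(m+1)}\,d\theta=0$ for $m\ge 0$, we have $\mathbb{E}(a_{k,n}(m)\mid\mathcal{F}_{k-1})=0$, so $\mathfrak{X}_{k,n}$ is a martingale difference array. By Corollary \ref{6.6}, condition (II) of Theorem \ref{6.2} holds with
$$s^2=\sum_{i=1}^p\mu_i^2\,\frac{2}{\alpha(2i+2-\alpha)}+\sum_{j=1}^q\nu_j^2\,\frac{2}{\alpha(2j+2-\alpha)},$$
which is strictly positive because each coefficient $\frac{2}{\alpha(2m+2-\alpha)}$ is strictly positive for $0<\alpha<2$; by Lemma \ref{6.7}, condition (I) holds. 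Hence Theorem \ref{6.2} yields
$$\sum_{k=1}^n\mathfrak{X}_{k,n}=\sum_{i=1}^p\mu_iA_i^n+\sum_{j=1}^q\nu_jB_j^n\ \xrightarrow{\,d\,}\ \mathcal{N}(0,s^2).$$
(If all the scalars vanish the left-hand side is identically $0$ and there is nothing to prove.)

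Next I would observe that $s^2$ is exactly the variance of $\sum_{i=1}^p\mu_iA_i+\sum_{j=1}^q\nu_jB_j$, where $(A_i,B_j)$ is the centred Gaussian vector with the claimed covariance structure: the diagonal form of $s^2$ encodes precisely the vanishing of $\mathrm{Cov}(A_i,B_j)$ and of $\mathrm{Cov}(A_i,A_j)$, $\mathrm{Cov}(B_i,B_j)$ for $i\neq j$, which was established for the increments in Lemma \ref{6.4} and which passes to the partial sums because, for martingale difference arrays with respect to a common filtration, the cross terms $\mathbb{E}(X_kY_\ell)$ with $k\neq\ell$ vanish. Thus every finite linear combination of $\{A_i^n,B_j^n\}$ converges in distribution to the corresponding linear combination of $\{A_i,B_j\}$. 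Since convergence in distribution on $\mathbb{R}^{\mathbb{N}}$ equipped with the product topology is equivalent to convergence of all finite-dimensional marginals, and the latter follows from the Cram\'er--Wold theorem applied to each finite truncation, the stated joint convergence follows. Finally $\mathbb{E}(A_m^n)=\sum_{k=1}^n\mathbb{E}(\Re a_{k,n}(m))=0$ and likewise $\mathbb{E}(B_m^n)=0$, so the limiting mean vector is zero.

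I expect the only delicate point to be bookkeeping rather than analysis: handling the degenerate choices of scalars (in particular ruling out $s^2=0$ before applying Theorem \ref{6.2}), and being careful that the infinite-dimensional joint convergence is reduced through the product topology to genuinely finite-dimensional Cram\'er--Wold statements, rather than invoking Cram\'er--Wold directly in infinite dimensions. The substantive work — the Lindeberg-type truncation bound and the computation of the limiting conditional second moments — has already been carried out in the preceding lemmas.
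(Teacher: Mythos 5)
Your proposal is correct and follows essentially the same route the paper takes (implicitly, since Theorem \ref{6.8} is stated without a separate proof block): apply McLeish's Theorem \ref{6.2} to the linear combinations $\mathfrak{X}_{k,n}$, with condition (I) supplied by Lemma \ref{6.7} and condition (II) by Corollary \ref{6.6}, and then invoke Cram\'er--Wold on finite truncations. Your extra care in ruling out $s^2=0$ before applying McLeish and in noting that Cram\'er--Wold should be applied only to finite-dimensional marginals (with the infinite-dimensional statement recovered via the product topology) is a welcome tightening of what the paper leaves implicit.
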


\subsection{Convergence as a holomorphic function}
Now that we have proved that the Laurent coefficients converge, we wish to show that we also have the convergence of the fluctuations as a holomorphic function. We first define the functions,
$$\tilde{\mathcal{F}}(n,z)=\sqrt{n}\left(e^{-C_{1,n}^{*}}\phi_{n}(z)-z\right)$$
and 
$$\mathcal{F}(z)=\sum_{m=0}^{\infty}(A_m+iB_m)z^{-m}$$
where  $A_m$, $B_m$ are defined as in Theorem \ref{6.8}.
Our aim is to show that $\tilde{\mathcal{F}}(n,z)\to \mathcal{F}(z)$ in distribution as $n \to \infty$ on the space of holomorphic functions, $\mathcal{H}$, equipped with the metric,
$$\textbf{d}_{\mathcal{H}}(f,g)=\sum_{m\geq 0} 2^{-m}\left(1 \wedge \sup_{|z|\geq 1+2^{-m}}|f(z)-g(z)|\right).$$
We use a similar method as in \cite{norris2019scaling} by defining,
$$\textbf{d}_r(f,g)=\sup_{|z|>r}|f(z)-g(z)|.$$  To make notation easier, we also define $M(n,m)=\sum_{k=1}^{n} a_{k,n}(m)$. We first need the following lemma used to discard the tail terms.

\begin{lemma}\label{lemma12}
Let $r>1$ and $N>0$ then for any $\epsilon>0$
$$\lim_{T\to\infty}\sup_{n>N}\mathbb{P}\left(\textbf{d}_r\left(\sum_{m=T}^{\infty}M(n,m)z^{-m}, 0\right)>\epsilon\right)=0.$$
\begin{proof}
Using the definition of $\textbf{d}_r(f,g)$ we see that, 
$$\textbf{d}_r\left(\sum_{m=T}^{\infty}M(n,m)z^{-m}, 0\right)=\sup_{|z|>r}\left|\sum_{m=T}^{\infty}M(n,m)z^{-m}\right|.$$
By Markov's inequality,
\begin{align*}
\mathbb{P}\left(\textbf{d}_r\left(\sum_{m=T}^{\infty}M(n,m)z^{-m}, 0\right)>\epsilon\right)&\leq \frac{1}{\epsilon^2}\mathbb{E}\left( \sup_{|z|>r}\left|\sum_{m=T}^{\infty}M(n,m)z^{-m}\right|^2\right)\\
&\leq \frac{1}{\epsilon^2}\mathbb{E}\left( \sup_{|z|>r}\left(\sum_{m=T}^{\infty}|M(n,m)||z|^{-m}\right)^2\right)\\
&\leq \frac{1}{\epsilon^2}\mathbb{E}\left( \left(\sum_{m=T}^{\infty}|M(n,m)|r^{-m}\right)^2\right).
\intertext{Using the Cauchy-Schwartz inequality we have,}
\mathbb{P}\left(\textbf{d}_r\left(\sum_{m=T}^{\infty}M(n,m)z^{-m}, 0\right)>\epsilon\right)&\leq \frac{1}{\epsilon^2}\mathbb{E}\left( \left(\sum_{m=T}^{\infty}|M(n,m)|^2 r^{-m}\right)\left(\sum_{m=T}^{\infty} r^{-m}\right)\right)\\
&\leq \frac{\lambda(r)}{\epsilon^2}\mathbb{E} \left(\sum_{m=T}^{\infty}|M(n,m)|^2 r^{-m}\right)
\intertext{where $\lambda(r)$ is some constant dependent on r. Then we can take the expectation inside the sum, thus, }
\mathbb{P}\left(\textbf{d}_r\left(\sum_{m=T}^{\infty}M(n,m)z^{-m}, 0\right)>\epsilon\right)&\leq \frac{1}{\epsilon^2} \sum_{m=T}^{\infty}\mathbb{E}\left( |M(n,m)|^2 \right)r^{-m} .
\end{align*}
Now notice that,
\begin{align*}
\mathbb{E}\left(|M(n,m)|^2\right)=\mathbb{E}\left(\left|\sum_{k=1}^n a_{k,n}(m)\right|^2\right)\leq \mathbb{E}\left(\sum_{k=1}^n (\Re(a_{k,n}(m))^2+(\Im(a_{k,n}(m))^2\right).
\end{align*}
But in equation (\ref{eq:1}) we show that 
\begin{align*}
(\Re(a_{k,n}(m))^2+(\Im(a_{k,n}(m))^2\leq \lambda(\alpha,c,r) n^{1-\frac{2(m+1)}{\alpha}}k^{\frac{2(m+1)}{\alpha}-2}
\end{align*}
where $\lambda(\alpha,c,r)$ is some constant. Taking the sum over $k$ we see that,
$$\mathbb{E}\left(|M(n,m)|^2\right)\leq \lambda(\alpha,c,r).$$
Therefore, 
\begin{align*}
\lim_{T\to\infty}\sup_{n>N}\mathbb{P}\left(\textbf{d}_r\left(\sum_{m=T}^{\infty}M(n,m)z^{-m}, 0\right)>\epsilon\right)&\leq \lim_{T\to \infty}\frac{1}{\epsilon^2}\lambda(\alpha,c,r) \sum_{m=T}^{\infty}r^{-m}\\
&\to 0 \text{ as }T \to \infty.
\end{align*}
\end{proof}
\end{lemma}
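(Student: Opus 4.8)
The plan is to bound the tail $\sum_{m=T}^{\infty}M(n,m)z^{-m}$ by a second-moment (Markov) argument, which reduces the problem to a bound on $\mathbb{E}|M(n,m)|^2$ that is uniform in \emph{both} $n$ and $m$; once that is in hand the claim follows by summing a geometric series in $r^{-m}$.

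First I would apply Markov's inequality together with the elementary estimate $\sup_{|z|>r}\big|\sum_{m\geq T}M(n,m)z^{-m}\big|\leq \sum_{m\geq T}|M(n,m)|r^{-m}$, and then Cauchy--Schwarz to split off the convergent factor $\sum_{m\geq T}r^{-m}\leq\lambda(r)$, giving
\[
\mathbb{P}\left(\textbf{d}_r\left(\sum_{m=T}^{\infty}M(n,m)z^{-m},0\right)>\epsilon\right)\leq \frac{\lambda(r)}{\epsilon^2}\sum_{m=T}^{\infty}\mathbb{E}\big(|M(n,m)|^2\big)\,r^{-m},
\]
where monotone convergence justifies moving the expectation inside the sum.

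The main step is then to show $\mathbb{E}|M(n,m)|^2\leq\lambda(\alpha,c)$ for every $n\geq 1$ and $m\geq 0$. Since $M(n,m)=\sum_{k=1}^{n}a_{k,n}(m)$ with $\mathbb{E}(a_{k,n}(m)\mid\mathcal{F}_{k-1})=0$, the increments are orthogonal, so $\mathbb{E}|M(n,m)|^2=\sum_{k=1}^{n}\mathbb{E}|a_{k,n}(m)|^2$, and because $|e^{i\theta_k(m+1)}|=1$ this last quantity is the deterministic sum $\sum_{k=1}^{n}\frac{4(c_k^*)^2 n}{(e^{C_{k+1,n}^{*}})^{2(m+1)}}$. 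Using $c_k^*\leq\frac{c(1+\alpha c)}{1+\alpha ck}$ and the lower bound $e^{C_{k+1,n}^{*}}\geq\big(\frac{1+\alpha cn}{1+\alpha ck}\big)^{1/\alpha}$ (valid since $\epsilon_{k+1,n}\geq 0$ in Lemma \ref{lemma2.2}) reduces this, exactly as in (\ref{eq:1}), to $\lambda(\alpha,c)\,n(1+\alpha cn)^{-\frac{2(m+1)}{\alpha}}\sum_{k=1}^{n}(1+\alpha ck)^{\frac{2(m+1)}{\alpha}-2}$; comparing the sum with $\int_0^n(1+\alpha cx)^{\frac{2(m+1)}{\alpha}-2}\,dx$ — legitimate because for $0<\alpha<2$ one has $\frac{2(m+1)}{\alpha}-2\geq\frac{2}{\alpha}-2>-1$ for all $m\geq 0$ — yields $\mathbb{E}|M(n,m)|^2\leq\frac{\lambda(\alpha,c)}{2(m+1)-\alpha}\cdot\frac{n}{1+\alpha cn}\leq\lambda(\alpha,c)$, independently of $n$ and $m$.

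Combining the two estimates gives $\sup_{n>N}\mathbb{P}(\,\cdot\,)\leq\epsilon^{-2}\lambda(\alpha,c,r)\sum_{m\geq T}r^{-m}=\epsilon^{-2}\lambda(\alpha,c,r)\frac{r^{-T}}{1-r^{-1}}\to 0$ as $T\to\infty$; note the bound is in fact uniform over all $n\geq1$, so the supremum over $n>N$ costs nothing. The only genuinely delicate point is obtaining the bound on $\mathbb{E}|M(n,m)|^2$ uniformly in the two indices simultaneously: this relies on the martingale orthogonality to convert the square of a sum into a sum of squares, and on the hypothesis $\alpha<2$ to keep the relevant power of $1+\alpha cx$ integrable with the correct growth rate.
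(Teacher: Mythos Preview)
Your proof is correct and follows essentially the same route as the paper: Markov's inequality with a second moment, the sup bound $\sup_{|z|>r}|\sum_m M(n,m)z^{-m}|\leq\sum_m|M(n,m)|r^{-m}$, Cauchy--Schwarz to split off $\sum_{m\geq T}r^{-m}$, and then a uniform bound $\mathbb{E}|M(n,m)|^2\leq\lambda(\alpha,c)$ obtained from the explicit form of $a_{k,n}(m)$ and an integral comparison. Your write-up is in fact a little more careful than the paper's in two places---you invoke martingale orthogonality to turn $\mathbb{E}|\sum_k a_{k,n}(m)|^2$ into $\sum_k\mathbb{E}|a_{k,n}(m)|^2$ (the paper simply writes $\leq$), and you explicitly check the integrability condition $\tfrac{2(m+1)}{\alpha}-2>-1$ needed for the Riemann-sum comparison to work uniformly in $m$---but these are refinements of the same argument, not a different approach.
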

Therefore, through Theorem \ref{6.8} we have shown that we have convergence of the Laurent coefficients. Moreover, Lemma \ref{lemma12} shows that the tails of the Laurent series tend to zero in the limit. We can then combine these two results with a result of Billingsley \cite{billingsley1999convergence} to show that we have convergence as a holomorphic function and therefore the fluctuations behave like a Gaussian field.
\newpage
\begin{thm3}
Let $0<\alpha<2$ and $\phi_n$ be defined as in Theorem \ref{a}. Then as $n\to \infty$,
$$\sqrt{n}\left(e^{-\sum_{i=1}^{n}c_i^{*}}\phi_{n}(z)-z\right)\to \mathcal{F}(z)$$
in distribution on $\mathcal{H}$, where $\mathcal{H}$ is the space of holomorphic functions on ${|z| > 1}$,  equipped with metric $\textbf{d}_{\mathcal{H}}$ defined above, and where 
$$\mathcal{F}(z)=\sum_{m=0}^{\infty}(A_m+iB_m)z^{-m}$$
and $A_m$, $B_m \sim  \mathcal{N}\left(0, \frac{2}{\alpha(2m+2-\alpha)}\right)$ and $A_m$, $B_k$ independent for all choices of $m$ and $k$.
\end{thm3}
\section*{Acknowledgements}
We would like to thank Steffen Rhode for many useful conversations on his previous work in this area. We would also like to thank Vincent Beffara for a helpful discussion leading to the proof of Theorem 1.1. Finally, we would like to thank the anonymous referee for a careful reading of our paper and many useful comments. 
\bibliographystyle{unsrt}
\newpage

\bibliography{newbib}

\end{document}